\documentclass[preprint,12pt]{elsarticle}

\usepackage{amsfonts}
\usepackage{amssymb}
\usepackage{mathtools}
\usepackage{tikz}
\usetikzlibrary{arrows.meta, positioning, calc}
\usepackage{tikz-cd}
\usepackage{booktabs}
\usepackage{array}
\usepackage{enumitem}
\setlist[enumerate]{leftmargin=.5in}
\setlist[itemize]{leftmargin=.5in}
\usepackage{url}

\graphicspath{{./}}

\usepackage[capitalise,nameinlink]{cleveref}

\usepackage{amsthm}
\newtheorem{theorem}{Theorem}[section]

\newtheorem{corollary}[theorem]{Corollary}
\newtheorem{proposition}[theorem]{Proposition}
\theoremstyle{definition}
\newtheorem{definition}[theorem]{Definition}
\newtheorem{example}[theorem]{Example}
\theoremstyle{remark}
\newtheorem{remark}[theorem]{Remark}

\newcommand{\kk}{k}
\newcommand{\QQ}{\mathbb{Q}}
\newcommand{\ZZ}{\mathbb{Z}}
\newcommand{\RR}{\mathbb{R}}
\newcommand{\CC}{\mathbb{C}}

\newcommand{\Var}{\mathbf{V}}

\newcommand{\GB}{\mathrm{GB}}

\newcommand{\conv}{\mathrm{conv}}

\DeclareMathOperator{\Fam}{Fam}

\DeclareMathOperator{\Pairs}{Pairs}
\DeclareMathOperator{\rank}{rank}

\journal{Journal of Computational Algebra}

\begin{document}

\begin{frontmatter}

\title{Algebraic Varieties and Ideal Theory\\
  in Combinatorial Click-Reaction Design}

\author[upc]{Vicent Ribas Ripoll\corref{cor1}}
\ead{vribas@ieee.org}
\cortext[cor1]{Corresponding author}
\address[upc]{Barcelona, Spain}

\begin{abstract}
In this short paper we study compatibility-constrained combinatorial
chemical assembly problems through the lens of commutative algebra.
Given a finite set~$F$ of \emph{chemical families}, a finite
set~$H$ of \emph{handle types}, and
a compatibility relation $\Pairs(f) \subseteq H \times H$ for each
$f \in F$, we construct an \emph{assembly ideal}
$I = J_{\mathrm{bool}} + J_{\mathrm{sel}} + K_{\mathrm{compat}}$
in a polynomial ring $R = \kk[F, H, H']$ whose variety
$\Var(I) \subseteq \{0,1\}^n$ is the set of feasible triples.
We prove that $I$ is zero-dimensional and radical, so
$R/I \cong \kk^{|\Var(I)|}$.  Elimination ideals characterise
\emph{handle diagnosticity} (whether a handle determines its
family), the toric ideal of the log-linear model on~$\Var(I)$
measures the redundancy of the compatibility relation, and a
multi-step ideal~$I^{(k)}$ encodes orthogonality constraints
between simultaneous assembly plans; the clique number
$\omega(G_\perp)$ of the associated orthogonality graph gives
the maximum number of plans that can coexist without
cross-reactivity.  We derive a necessary and sufficient criterion
for a new family to raise~$\omega$.
The framework is instantiated on the bioorthogonal click chemistry
landscape ($|F| = 8$, $|H| = 17$), yielding
$|\Var(I)| = 30$, a toric ideal with 2~generators,
$\mathrm{ML\,degree} = 1$, and $\omega(G_\perp) = 4$.
All computations are verified over~$\QQ$ in SymPy.
\end{abstract}

\begin{keyword}
Boolean varieties \sep radical ideals \sep elimination ideals
\sep toric ideals \sep algebraic statistics \sep
bioorthogonal chemistry \sep combinatorial assembly
\MSC 13P25 \sep 14M25 \sep 62R01 \sep 05E40
\end{keyword}

\end{frontmatter}

\section{Introduction}\label{sec:intro}

\paragraph{Motivation: designing an ADC by dual click chemistry.}
Antibody--drug conjugates (ADCs) are therapeutic molecules in
which a monoclonal antibody is covalently linked to a cytotoxic
payload, delivering the drug selectively to cells displaying the
antigen recognised by the antibody.  Kadcyla (trastuzumab
emtansine), approved for HER2-positive metastatic breast cancer,
is a canonical example: its antibody trastuzumab is linked to the
maytansinoid DM1 through a maleimide--thioether chemistry installed
on lysine residues~\cite{Lang14}.  Maleimide chemistry is simple
but not bioorthogonal and can exchange with free thiols in plasma,
which motivates the use of \emph{bioorthogonal click
reactions}~\cite{Sletten09,Devaraj11}---highly selective ligations
that proceed under mild, aqueous conditions without interfering
with native biological functionality.

A promising design strategy is a \emph{dual-click assembly}:
install a first click handle on the antibody, then attach the
payload in two orthogonal click steps via a bifunctional linker
carrying one complementary handle at each end.  For concreteness,
install a cyclopropene on the antibody (e.g.\ by genetic code
expansion at an engineered residue); the bifunctional linker
bears a tetrazine at one end and an azide at the other; the
payload DM1 bears a dibenzocyclooctyne (DBCO).  Step~1 attaches
the linker to the antibody by inverse electron-demand Diels--Alder
(IEDDA) between cyclopropene and tetrazine; Step~2 attaches DM1
to the linker by strain-promoted azide--alkyne cycloaddition
(SPAAC) between azide and DBCO.  The resulting conjugate is shown
in \cref{fig:kadcyla}; this is the concrete molecule we return to
throughout the paper to interpret each algebraic invariant we
compute.  The design must guarantee that
(i) the two reactions are compatible with their handle partners,
and (ii) the cyclopropene, tetrazine, azide and DBCO handles do
not cross-react within or between steps.  This is a small instance
of a larger combinatorial question: which triples of family,
source handle, and target handle are feasible, and which tuples of
such triples can be run in sequence or in parallel without
cross-reactivity?

\begin{figure}[ht]
\centering
\includegraphics[width=0.98\textwidth]{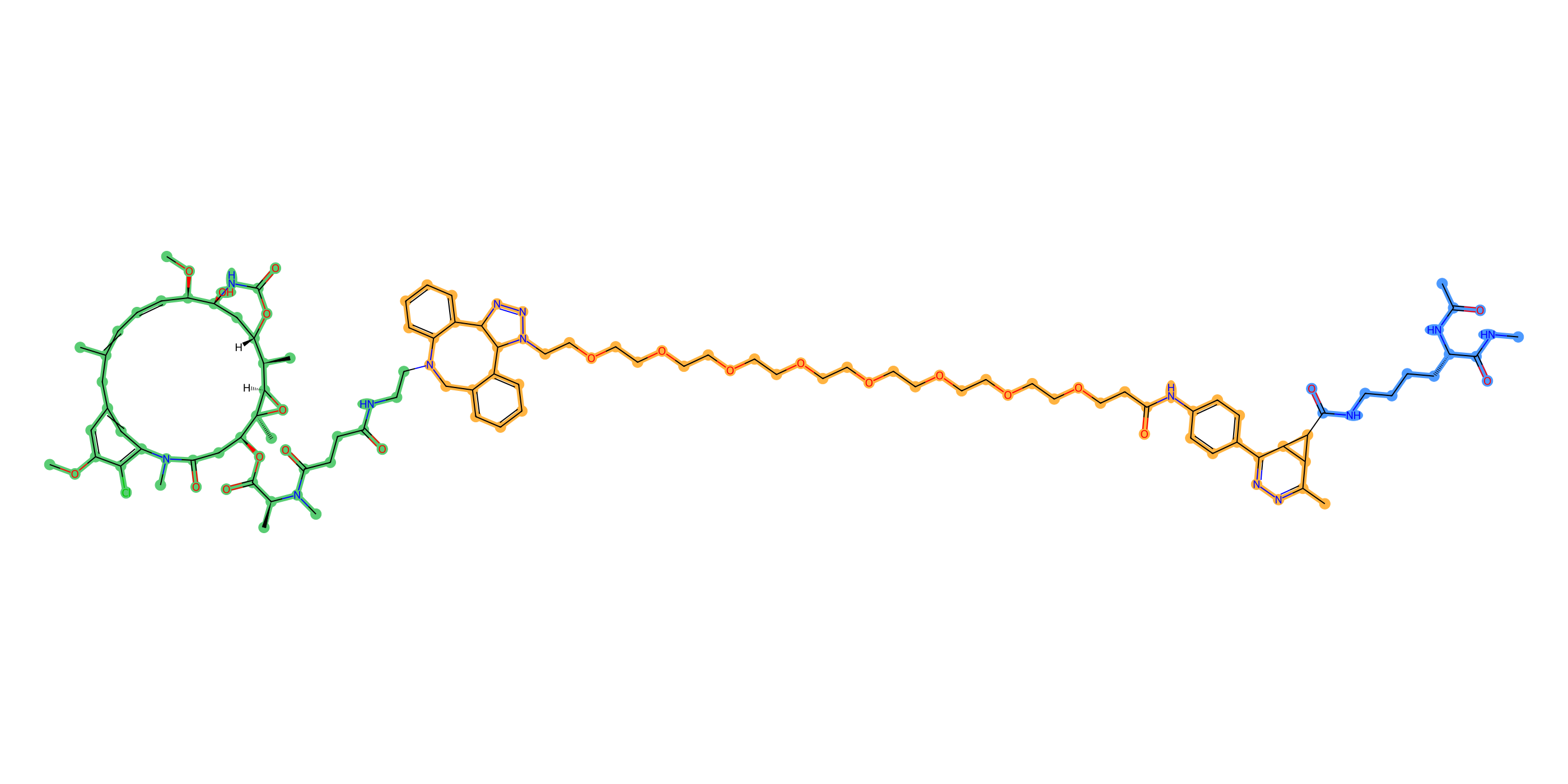}
\caption{The working conjugate of this paper: the dual-click
  redesign of Kadcyla formalised below in \cref{ex:kadcyla}.
  The maytansinoid payload DM1 (left, green) is joined to the
  linker through a SPAAC triazole (DBCO + azide, step~2,
  orange); a PEG spacer (centre, orange) bridges to a
  dihydropyridazine IEDDA product (cyclopropene + methyltetrazine,
  step~1, orange/blue), which is installed on the engineered
  residue of the antibody, represented here by an acetyl-lysine
  stand-in (right, blue).  The original SMCC thioether of
  Kadcyla is replaced by the two bioorthogonal click junctions;
  every subsequent algebraic result in the paper is interpreted
  on this molecule.}
\label{fig:kadcyla}
\end{figure}

Since the seminal copper-catalysed azide--alkyne cycloaddition
(CuAAC) of Sharpless and Meldal~\cite{Rostovtsev02,Tornoe02}, the
bioorthogonal toolkit has grown to eight established reaction
families---including strain-promoted variants (SPAAC), inverse
electron-demand Diels--Alder ligations (IEDDA), oxime and
hydrazone condensations, Staudinger ligations, thiol--ene
additions, and photo-click reactions~\cite{Lang14,McKay14}---and
at least seventeen reactive-group handles.  Some handles
participate in more than one family, creating cross-reactivity
constraints that limit how many reactions can run simultaneously
in the same pot.  Designing multiplexed bioconjugation protocols
therefore requires solving a compatibility-constrained
combinatorial assembly problem over families, handles, and
feasible pairings.  Enumeration can answer a specific question for
a fixed toolkit but cannot reason about the design space itself:
it does not reveal which handles are structurally diagnostic of
their family, whether the model of ``random'' feasible assembly
has a closed-form maximum-likelihood estimator, how many reactions
can at most be run orthogonally, and what conditions an emerging
ninth family would have to satisfy to raise that maximum.

\paragraph{An algebraic formulation.}
We take an algebraic view.  Abstractly, the design problem is a
\emph{family--handle pair structure}: a finite set~$F$ of
\emph{families}, a finite set~$H$ of \emph{handle types}, and for
each family~$f$ a compatibility relation
$\Pairs(f) \subseteq H \times H$.  A \emph{feasible triple}
is a tuple $(f, h, h') \in F \times H \times H$ with
$(h, h') \in \Pairs(f)$.
We encode this structure as an ideal~$I$ in a multivariate
polynomial ring over a field~$\kk$, so that the set of feasible
triples becomes the Boolean variety $\Var(I) \subseteq \{0,1\}^n$,
and reduce the design questions above to standard operations in
commutative algebra---primary decomposition, elimination ideals,
Gr\"obner bases, toric ideals, and clique numbers of derived
graphs---following the polynomial method in
combinatorics~\cite{CLO15} and the language of algebraic
statistics~\cite{PS05,DSS09}.  The Kadcyla--cyclopropene scenario
described above is a two-step feasible plan in this formalism,
and we return to it throughout the paper to interpret the
algebraic invariants we compute.

\paragraph{Why algebra?}
Three features distinguish the algebraic route from direct
enumeration or graph-theoretic ad-hoc methods.
First, the ideal $I$ is a \emph{single object} that encodes every
feasibility constraint simultaneously; adding or removing a reaction
family amounts to adding or removing generators, with all consequences
propagated automatically by Gr\"obner-basis computation.
Second, the theory supplies \emph{certificates}: a handle is
diagnostic if and only if a certain polynomial lies in an elimination
ideal, and two plans are orthogonal if and only if their joint
ideal is the unit ideal---these are machine-checkable proofs, not
case-by-case verifications.
Third, the framework is \emph{modular}: the general results
(zero-dimensionality, radicality, fibre decomposition,
elimination-based diagnosticity) hold for any family--handle pair
structure, while the numerical invariants (30 feasible triples,
$\omega = 4$, ML degree~$= 1$, two toric generators) are specific
to the 8-family bioorthogonal instance we study in detail.

\medskip
\noindent\textbf{Contributions.}
Our main results are:

\begin{enumerate}[label=(\roman*),nosep]
\item The assembly ideal $I$ is zero-dimensional and radical, so
  $R/I \cong \kk^{|\Var(I)|}$ is a product of copies of the ground
  field---one per feasible triple.  The quotient admits a layered
  filtration through products of fields, with each surjection
  corresponding to an explicit ideal quotient (\cref{sec:ring}).
\item For each handle $h \in H$, write $\Fam(h)$ for the set of
  families that use~$h$; we call $h$ \emph{diagnostic} when
  $|\Fam(h)| = 1$.  Diagnosticity is characterised algebraically
  by membership of $h \cdot (1 - f_0)$ in the elimination ideal
  $I \cap \kk[F, H]$; in the 8-family bioorthogonal instance,
  12 of 17 handles are diagnostic (\cref{sec:elimination}).
\item For the 8-family instance, the toric ideal of the log-linear
  model on~$\Var(I)$ is generated by exactly 2 binomials and the
  ML degree equals~1, so the MLE is a rational function of the data
  (\cref{sec:statistics}).
\item The multi-step ideal $I^{(k)}$ breaks the single-step fibre
  decomposition; the clique number $\omega(G_\perp)$ of the
  \emph{orthogonality graph} (whose vertices are feasible triples
  and whose edges connect pairwise compatible triples) satisfies
  $\omega(G_\perp) = 4$ for the 8-family instance, with the
  obstruction concentrated in four corridors of the
  cross-reactivity graph (\cref{sec:multistep}).
\end{enumerate}

\Cref{tab:dictionary} summarises the correspondence between
algebraic operations and their combinatorial or chemical meaning.

\begin{table}[ht]
\centering
\caption{Algebraic--chemical dictionary.}
\label{tab:dictionary}
\begin{tabular}{@{}lll@{}}
\toprule
\textbf{Algebra} & \textbf{Operation} & \textbf{Chemical meaning} \\
\midrule
$\Var(I) \subseteq \{0,1\}^n$ & Zero set & Feasible assembly triples \\
$I = \sqrt{I}$ & Radical ideal & No hidden multiplicities \\
$\GB(I)$ (lex) & Gr\"obner basis & Triangular enumeration of assemblies \\
$I \cap \kk[F, H]$ & Elimination & Handle diagnosticity \\
$I \cap \kk[H, H']$ & Elimination & Reaction graph (family-free) \\
$\dim_\kk R/I$ & Vector space dim & Count of feasible assemblies \\
$I^{(k)}$ & Multi-step ideal & Orthogonal reaction plans \\
$\omega(G_\perp)$ & Clique number & Maximum simultaneous reactions \\
\bottomrule
\end{tabular}
\end{table}

\paragraph{Related work.}
Algebraic geometry has been applied to chemical reaction networks
primarily through the deficiency theory of Feinberg~\cite{Feinberg19}
and the toric dynamical systems programme; Dickenstein~\cite{Dickenstein16}
and Feliu--Shiu~\cite{FeliuShiu25} provide surveys.  That line of
work concerns \emph{continuous
kinetics}---the steady-state ideal of a mass-action system---whereas
our setting is a \emph{discrete combinatorial} assembly problem on a
Boolean variety.  The algebraic tools we employ---radical ideals,
elimination, toric ideals, and Gr\"obner bases---are closer in
spirit to the polynomial method in combinatorics~\cite{CLO15} and to
the algebraic statistics of log-linear
models~\cite{PS05,DSS09}.  The connection between toric ideals and
integer programming is classical; see Sturmfels~\cite{Sturmfels96}.
For the chemistry of bioorthogonal click reactions we refer to
Sletten--Bertozzi~\cite{Sletten09} and
Devaraj--Weissleder~\cite{Devaraj11}.

\paragraph{Outline.}
\Cref{sec:ring} defines the assembly ring and ideal and
establishes radicality.
\Cref{sec:groebner} treats Gr\"obner bases, the shape lemma,
and the fibre decomposition.
\Cref{sec:elimination} develops elimination theory and the
diagnosticity theorem.
\Cref{sec:statistics} introduces the algebraic statistics of
$\Var(I)$, including the toric ideal and ML degree.
\Cref{sec:multistep} defines the multi-step ideal, proves
the $\omega = 4$ orthogonality bound, and analyses conditions under
which emerging reactions can raise it.
\Cref{sec:computation} reports computational results.
\Cref{sec:discussion} discusses limitations and open problems.
\Cref{sec:conclusions} summarises the algebraic contributions
and their concrete implications for the bioorthogonal click chemistry
landscape.

\section{The Assembly Ring and Ideal}\label{sec:ring}

\subsection{Setup}

We fix a ground field $\kk$ of characteristic zero.  The
assembly ideal has generators with integer coefficients
(\cref{def:ideal}), so its algebraic structure---membership,
radicality, primary decomposition, elimination---is defined
over~$\QQ$ and all exact computations in the paper are carried
out in $\kk = \QQ$.  Geometric invariants that are intrinsically
defined over an algebraically closed field---in particular the
ML degree of the log-linear extension to~$\Var(I)$, which counts
critical points of the likelihood function over~$\CC$---are
computed after base change to~$\CC$.  Because $\Var(I)$ is
zero-dimensional (\cref{prop:zero-dim} below) and contained in
$\{0,1\}^n \subset \QQ^n$, the variety is the same over any
characteristic-zero field; only $\kk$-algebra invariants that
refer to the base field, such as the decomposition
$R/I \cong \kk^{|\Var(I)|}$, depend on the choice of~$\kk$, and
they do so only up to base change.

The input data consists of:

\begin{itemize}[nosep]
\item A finite set $F = \{f_1, \ldots, f_r\}$ of \emph{families}.
\item A finite set $H = \{h_1, \ldots, h_m\}$ of \emph{handle types}.
\item For each family $f_i \in F$, a set
  $\Pairs(f_i) \subseteq H \times H$ of \emph{compatible pairs}:
  the ordered pairs of handles that $f_i$ admits.
\end{itemize}

\noindent
In the bioorthogonal click chemistry instantiation, $F$ consists
of reaction families (e.g.\ SPAAC, CuAAC, IEDDA) and $H$ of
reactive functional groups (e.g.\ azide, alkyne, tetrazine, thiol).

\begin{definition}[Assembly Polynomial Ring]\label{def:ring}
The \emph{assembly polynomial ring} is
\[
  R \;=\; \kk[\,\underbrace{f_1, \ldots, f_r}_{F},\;
              \underbrace{h_1, \ldots, h_m}_{H},\;
              \underbrace{h'_1, \ldots, h'_m}_{H'}\,]
\]
where $H'$ is a second copy of $H$.
The total number of variables is $n = r + 2m$.
\end{definition}

The variables $f_i$ are indicator variables for the choice of
family; $h_j$ and $h'_j$ are indicator variables for the
source-side and target-side handle, respectively.  In the
chemical instantiation, $h$ and $h'$ represent the two
complementary reactive groups participating in a reaction.

\begin{definition}[Assembly Ideal]\label{def:ideal}
The \emph{assembly ideal} is $I = J_{\mathrm{bool}} + J_{\mathrm{sel}} + K_{\mathrm{compat}}$
where:
\begin{align}
  J_{\mathrm{bool}} &= \langle\, x^2 - x \;:\; x \in F \cup H \cup H'\,\rangle,
    \label{eq:jbool} \\
  J_{\mathrm{sel}} &= \bigl\langle\, \textstyle\sum_{i} f_i - 1,\;
    \textstyle\sum_{j} h_j - 1,\;
    \textstyle\sum_{j} h'_j - 1 \,\bigr\rangle,
    \label{eq:jsel} \\
  K_{\mathrm{compat}} &= \langle\, f_i \cdot h_a \cdot h'_b
    \;:\; (h_a, h_b) \notin \Pairs(f_i) \,\rangle.
    \label{eq:kcompat}
\end{align}
\end{definition}

The component $J_{\mathrm{bool}}$ cuts out the Boolean hypercube
$\{0,1\}^n$; $J_{\mathrm{sel}}$ imposes the simplex constraint
(exactly one~1 per coordinate block); $K_{\mathrm{compat}}$ kills
monomials corresponding to incompatible family--handle triples.

\begin{theorem}[Feasibility Correspondence]\label{thm:feasibility}
Since $J_{\mathrm{bool}} \subset I$, the variety
$\Var(I) \subseteq \{0,1\}^n$.  There is a bijection
\[
  \Var(I) \;\longleftrightarrow\;
  \bigl\{\,(f, h, h') \in F \times H \times H :
  (h, h') \in \Pairs(f)\,\bigr\}.
\]
\end{theorem}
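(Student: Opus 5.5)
We argue directly at the level of points; no Gröbner machinery is needed. Throughout, a point of $\kk^n$ is written $p = (\phi, \eta, \eta')$ with $\phi \in \kk^r$, $\eta, \eta' \in \kk^m$, matching the variable blocks $F$, $H$, $H'$.

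\textbf{Step 1: Boolean containment.} Every generator $x^2 - x$ of $J_{\mathrm{bool}}$ lies in $I$, so each coordinate of $p \in \Var(I)$ satisfies $x(x-1) = 0$. Since $\kk$ is a field, each coordinate is $0$ or $1$, and hence $\Var(I) \subseteq \{0,1\}^n$.

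\textbf{Step 2: exactly one $1$ per block.} A $0/1$ vector $\phi$ with $\sum_i \phi_i = 1$ in $\kk$ must have exactly one entry equal to $1$. This is the one point where characteristic zero is used: the sum is a nonnegative integer at most $r$, and it equals $1$ in $\kk$ only if it equals $1$ in $\ZZ$. The same holds for $\eta$ and $\eta'$. So each $p \in \Var(I)$ has the form $p = e_{f_i} + e_{h_a} + e'_{h_b}$, a sum of one unit vector from each block, for a unique index triple $(i,a,b)$.

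\textbf{Step 3: define the map.} Send $p$ to $\Psi(p) = (f_i, h_a, h_b)$. For this point, a generator $f_j h_c h'_d$ of $K_{\mathrm{compat}}$ evaluates to $1$ when $(j,c,d) = (i,a,b)$ and to $0$ otherwise. Because $p$ lies in $\Var(I)$, it annihilates every generator. So if $(h_a,h_b) \notin \Pairs(f_i)$, the generator $f_i h_a h'_b$ would evaluate to $1 \neq 0$, a contradiction. Hence $(h_a,h_b) \in \Pairs(f_i)$, and $\Psi$ lands in the set of feasible triples.

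\textbf{Step 4: bijectivity.} $\Psi$ is injective because $p$ is recovered from its index triple. For surjectivity, take a feasible triple $(f_i,h_a,h_b)$ and the point $p = e_{f_i} + e_{h_a} + e'_{h_b}$. We check that $p$ kills every generator of $I$:
\begin{itemize}[nosep]
\item the Boolean generators vanish on $0/1$ coordinates;
\item each selection sum equals exactly $1$, so the generators of $J_{\mathrm{sel}}$ vanish;
\item each generator $f_j h_c h'_d$ of $K_{\mathrm{compat}}$ vanishes unless $(j,c,d) = (i,a,b)$, and that particular monomial is not a generator because $(h_a,h_b) \in \Pairs(f_i)$.
\end{itemize}
Since $p$ kills a generating set, it kills all of $I$, so $p \in \Var(I)$.

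The only subtle step is Step 2. It is where characteristic zero (or, more weakly, characteristic exceeding the block sizes) is genuinely needed; the rest is direct evaluation of generators.
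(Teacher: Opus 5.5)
Your proof is correct and follows the same route as the paper's. Both arguments reduce to $\{0,1\}^n$, use $J_{\mathrm{sel}}$ to force exactly one $1$ per block, and then note that the only compatibility monomial that can be nonzero at such a point is $f_i h_a h'_b$, which is a generator exactly when $(h_a,h_b)\notin\Pairs(f_i)$. Your write-up is more explicit than the paper's, notably in flagging that the exactly-one-per-block step needs characteristic zero (or characteristic larger than the block sizes), which the paper leaves implicit.
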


\begin{proof}
A point $p \in \{0,1\}^n$ lies in $\Var(J_{\mathrm{bool}} + J_{\mathrm{sel}})$
if and only if exactly one $f_i$, one $h_a$, and one $h'_b$ equal~1,
all others~0.  Such a point additionally satisfies $K_{\mathrm{compat}} = 0$
if and only if the monomial $f_i h_a h'_b$ does \emph{not} appear among
the generators of $K_{\mathrm{compat}}$, i.e.\
$(h_a, h_b) \in \Pairs(f_i)$.
\end{proof}

\paragraph{Interpretation.}
The Boolean variety $\Var(I)$ parametrises exactly the feasible
triples of the combinatorial assembly problem.
Every subsequent algebraic operation---elimination, Gr\"obner
bases, toric ideals---acts on this variety and hence on the
design space.

\paragraph{On the running example.}
Each step of the Kadcyla--cyclopropene plan
(\cref{ex:kadcyla}) is a lattice point of the Boolean variety:
step~1 is the indicator of
(IEDDA,\,cyclopropene,\,tetrazine), and step~2 is the indicator
of (SPAAC,\,azide,\,DBCO).  The existence of the ADC design thus
reduces to checking membership of two explicit points in
$\Var(I)$, which---by the correspondence above---is the same as
checking two Boolean incidences in the compatibility relation
$\Pairs$.

\subsection{Zero-dimensionality, radicality, and the layered filtration}

\begin{theorem}[Zero-dimensionality and Radicality]\label{thm:radical}
\label{prop:zero-dim}%
The assembly ideal $I$ is:
\begin{enumerate}[nosep,label=(\alph*)]
\item \textbf{Zero-dimensional:} $\Var(I) \subseteq \{0,1\}^n$ is a
  finite set of $\kk$-rational points.
\item \textbf{Radical:} $I = \sqrt{I}$.
\end{enumerate}
Consequently, the quotient $R/I$ is a reduced $\kk$-algebra and
\[
  R/I \;\cong\; \kk^{|\Var(I)|}, \qquad
  \dim_\kk R/I \;=\; |\Var(I)| \;<\; \infty.
\]
\end{theorem}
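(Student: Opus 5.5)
The plan is to derive everything from the single containment $J_{\mathrm{bool}} \subseteq I$, using the Boolean polynomials $x^2 - x$ as univariate elements of $I$ in every variable. For zero-dimensionality, note that for each variable $x$ the polynomial $x^2 - x$ lies in $I \cap \kk[x]$. Hence every point of $\Var(I)$ (over $\kk$ or any extension, in particular $\overline{\kk}$) has all coordinates in $\{0,1\}$. This gives the containment $\Var(I) \subseteq \{0,1\}^n$ already noted in \cref{thm:feasibility}, and shows that $\Var(I)$ is finite with every point $\kk$-rational. Equivalently, in any monomial order the leading monomials of $x^2 - x$ are $x^2$, so the standard monomials are among the $2^n$ squarefree monomials. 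Therefore $\dim_\kk R/I \le 2^n < \infty$, which is the algebraic form of zero-dimensionality.

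For radicality I will use Seidenberg's lemma: if a zero-dimensional ideal contains, for each variable $x_i$, a univariate polynomial $g_i(x_i)$ that is squarefree (i.e.\ $\gcd(g_i, g_i') = 1$), then the ideal is radical. Here $g_i = x_i^2 - x_i = x_i(x_i - 1)$ has distinct roots, and this holds in any characteristic. To keep the argument self-contained, I would sketch the following alternative. Because $J_{\mathrm{bool}} \subseteq I$, the ring $R/I$ is a quotient of $A = R/J_{\mathrm{bool}} \cong \kk^{\{0,1\}^n}$; this isomorphism follows from the Chinese remainder theorem, since the maximal ideals $\langle x_1 - p_1, \ldots, x_n - p_n\rangle$ for $p \in \{0,1\}^n$ are pairwise comaximal and their intersection is $J_{\mathrm{bool}}$. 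The ring $A$ is a finite product of fields, and every ideal of such a product is a product of $0$'s and copies of $\kk$. Hence every quotient of $A$ is again a product of copies of $\kk$, and in particular is reduced. Thus $I = \sqrt{I}$.

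Finally, I identify the number of factors. The surviving factors of $R/I \cong \kk^{S}$ correspond to the points $p \in \{0,1\}^n$ with $I \subseteq \mathfrak m_p$, that is, to the points $p \in \Var(I)$. This gives $R/I \cong \kk^{|\Var(I)|}$ and $\dim_\kk R/I = |\Var(I)|$. The same count also follows from the Nullstellensatz for radical zero-dimensional ideals with rational points. By \cref{thm:feasibility}, this number is the count of feasible triples.

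The main step requiring care is the identification $J_{\mathrm{bool}} = \bigcap_p \mathfrak m_p$, i.e.\ that $J_{\mathrm{bool}}$ is itself radical with exactly $2^n$ rational points. I would prove it by induction on $n$: $\kk[x]/(x^2 - x) \cong \kk \times \kk$ by the Chinese remainder theorem, and tensoring these isomorphisms over $\kk$ gives $A \cong \kk^{2^n}$. Everything else is formal.
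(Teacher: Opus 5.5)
Your proof is correct and follows essentially the paper's route: both reduce to $R/J_{\mathrm{bool}} \cong \kk^{2^n}$ via the Chinese Remainder Theorem and use that every quotient of a finite product of fields is again a product of copies of $\kk$, hence reduced. The differences are minor, and both are improvements. First, you justify $J_{\mathrm{bool}} = \bigcap_{p} \mathfrak m_p$ explicitly through the tensor decomposition $\bigotimes_i \kk[x_i]/(x_i^2 - x_i)$, which the paper only asserts. Second, you obtain $\dim_\kk R/I = |\Var(I)|$ by identifying the surviving factors with the points $p$ satisfying $I \subseteq \mathfrak m_p$, rather than citing the Finiteness Theorem.
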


\begin{proof}
\textit{(a)}\; Since $J_{\mathrm{bool}} \subset I$, we have
$\Var(I) \subseteq \Var(J_{\mathrm{bool}}) = \{0,1\}^n$, a finite
set of $2^n$ points in $\mathbb{A}^n(\overline{\kk})$.  Hence
$\Var(I)$ is finite, i.e.\ zero-dimensional, and all its points
lie in $\kk^n$.

\textit{(b)}\; The quotient $R/J_{\mathrm{bool}}$ is isomorphic
(as a $\kk$-algebra) to $\kk^{2^n}$, a product of $2^n$ copies
of the field~$\kk$ (the Chinese Remainder Theorem;
see e.g.~\cite{Eisenbud95}, \S2.4).  Every ideal in a product of
fields is radical, being an intersection of maximal ideals (each a
kernel of a coordinate projection).  The image of~$I$ in
$R/J_{\mathrm{bool}}$ is therefore radical, and since
$J_{\mathrm{bool}}$ is itself radical (its variety $\{0,1\}^n$ is
reduced), $I$ is radical.

\textit{(Dimension count.)}\; Since $I$ is zero-dimensional and
radical, the Finiteness Theorem
(\cite{CLO15}, Chapter~5, Theorem~6) gives
$\dim_\kk R/I = |\Var(I)|$.  The decomposition
$R/I \cong \kk^{|\Var(I)|}$ then follows from the Chinese
Remainder Theorem applied to the product-of-fields structure of
$R/J_{\mathrm{bool}}$.
\end{proof}

\begin{remark}\label{rem:zero-dim}
Zero-dimensionality is meant in the ideal-theoretic sense
$\dim(R/I) = 0$ (Krull dimension).  The ambient ring~$R$ has Krull
dimension~$n$; the Boolean constraint $J_{\mathrm{bool}}$ forces
the quotient down to dimension zero.  This ensures
Gr\"obner-basis enumeration terminates
(\cref{thm:shape}) and makes the ML degree a well-defined finite
invariant (\cref{sec:mldeg}).  The quotient $R/I$ is
\emph{reduced}---that is, has no nilpotent elements---precisely
because $I$ is radical; equivalently, $\Var(I)$ carries no embedded
multiplicities.  All points are $\kk$-rational, so results stated
over $\kk = \QQ$ extend unchanged to any characteristic-zero field.
\end{remark}

\paragraph{Interpretation.}
The identity $\dim_\kk R/I = |\Var(I)|$ means the $\kk$-vector-space
dimension of the quotient ring counts feasible triples exactly.
Membership testing in~$I$ provides a definitive feasibility
certificate for any candidate triple.

The radicality proof reveals a layered structure:

\begin{proposition}[Layered Filtration]\label{prop:filtration}
The assembly ideal induces a filtration of quotient rings:
\begin{align*}
  R &\;\twoheadrightarrow\; R/J_{\mathrm{bool}} \;\cong\; \kk^{2^n}
    &&\text{(Boolean hypercube)}, \\
  &\;\twoheadrightarrow\; R/(J_{\mathrm{bool}} + J_{\mathrm{sel}})
    \;\cong\; \kk^{|F| \cdot |H|^2}
    &&\text{(selection simplex)}, \\
  &\;\twoheadrightarrow\; R/I \;\cong\; \kk^{|\Var(I)|}
    &&\text{(feasible assemblies)}.
\end{align*}
Each quotient is a product of copies of~$\kk$, and each surjection
corresponds to killing coordinate factors.  The compatibility ideal
$K_{\mathrm{compat}}$ kills exactly $|F| \cdot |H|^2 - |\Var(I)|$
factors in the passage from layer~2 to layer~3.
\end{proposition}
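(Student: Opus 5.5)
The plan is to use one principle at all three layers. If $Q \supseteq J_{\mathrm{bool}}$ is an ideal of $R$, then $R/Q$ is a quotient of $R/J_{\mathrm{bool}} \cong \kk^{2^n}$. Every ideal of a finite product of fields is a product of some coordinate factors, equivalently the ideal of a subset of points. Hence $R/Q \cong \kk^{S}$, where $S = \Var(Q) \cap \{0,1\}^n = \Var(Q)$.

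First I make the CRT isomorphism explicit. The map $R/J_{\mathrm{bool}} \to \kk^{\{0,1\}^n}$, $g \mapsto (g(p))_p$, is injective: modulo $J_{\mathrm{bool}}$ every polynomial reduces to a multilinear one, and a multilinear polynomial vanishing on $\{0,1\}^n$ is zero. The map is surjective because each point $p$ has the multilinear indicator $\prod_{p_i=1} x_i \prod_{p_i=0}(1-x_i)$. This dimension-$2^n$ bijection is the first layer.

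Next I identify the image of $Q$ under this isomorphism. An element $g$ of $R/J_{\mathrm{bool}}$ lies in the image of $Q$ if and only if $g(p)=0$ for all $p \in \Var(Q)$. One direction is immediate. For the other, I use the indicator polynomials $e_p$, which are orthogonal idempotents in $R/J_{\mathrm{bool}}$. If $p \notin \Var(Q)$, some generator $q$ of $Q$ has $q(p) \neq 0$, so $e_p = q(p)^{-1} q\, e_p$ lies in the image of $Q$. Any $g$ vanishing on $\Var(Q)$ equals $\sum_{p \notin \Var(Q)} g(p) e_p$, so it also lies in the image. Therefore $R/Q \cong \kk^{\Var(Q)}$, and the surjection from $R/J_{\mathrm{bool}}$ is projection onto the coordinates indexed by $\Var(Q)$, killing exactly the factors indexed by the remaining points. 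This is the step needing care, though the idempotent argument settles it.

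I then apply this to the two nested ideals. For $Q = J_{\mathrm{bool}} + J_{\mathrm{sel}}$, the proof of \cref{thm:feasibility} shows that $\Var(Q)$ is the set of points with exactly one $1$ in each of the three blocks. These points correspond to $F \times H \times H$, so their number is $|F|\cdot|H|^2$, which gives the second layer. For $Q = I$, \cref{thm:feasibility} identifies $\Var(I)$ with the feasible triples. Since $\Var(I) \subseteq \Var(J_{\mathrm{bool}}+J_{\mathrm{sel}})$, the map from layer 2 to layer 3 is restriction of coordinates. The factors it kills are those indexed by the infeasible triples $(f_i,h_a,h'_b)$, which are exactly the points where the generator $f_i h_a h'_b$ of $K_{\mathrm{compat}}$ is nonzero. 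Their number is $|F|\cdot|H|^2 - |\Var(I)|$.

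Finally, the surjections compose because the ideals are nested: $J_{\mathrm{bool}} \subseteq J_{\mathrm{bool}}+J_{\mathrm{sel}} \subseteq I$. This agrees with \cref{thm:radical}.
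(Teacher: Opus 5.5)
Your proof is correct. It follows the route the paper has in mind: the paper gives no separate proof and presents the filtration as a by-product of the radicality argument in \cref{thm:radical}, using CRT for $R/J_{\mathrm{bool}} \cong \kk^{2^n}$, the fact that quotients of a product of fields are products of fields, and \cref{thm:feasibility} for the counts. The one refinement is your idempotent argument, which identifies $Q/J_{\mathrm{bool}}$ directly with the functions vanishing on $\Var(Q)$; this replaces the paper's appeal to the Finiteness Theorem and Nullstellensatz and shows explicitly which coordinate factors are killed at each stage.
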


\begin{corollary}[Primary Decomposition]\label{cor:primary}
The primary decomposition of $I$ is
\[
  I \;=\; \bigcap_{p \,\in\, \Var(I)} \mathfrak{m}_p
\]
where $\mathfrak{m}_p = \langle x_i - p_i : i = 1, \ldots, n \rangle$
is the maximal ideal of the point~$p$.
\end{corollary}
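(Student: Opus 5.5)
The plan is to deduce the corollary from \cref{thm:radical} and the Nullstellensatz over the finite point set. Radicality gives $I = \sqrt{I}$. The Strong Nullstellensatz needs an algebraically closed field, so I would pass to $\overline{\kk}$ and obtain $\sqrt{I\,\overline{\kk}[x]} = \mathbf{I}(\Var(I))$. By \cref{thm:radical}(a) every point $p \in \Var(I)$ lies in $\{0,1\}^n \subset \kk^n$, so the vanishing ideal of the finite set $\Var(I)$ is $\bigcap_{p} \mathfrak{m}_p$, and each $\mathfrak{m}_p$ is already defined over~$\kk$.

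To stay over $\kk$ and avoid a descent step, I would instead argue directly in $R/J_{\mathrm{bool}} \cong \kk^{2^n}$, as in the proof of \cref{thm:radical}(b). The isomorphism is evaluation at the $2^n$ points of $\{0,1\}^n$. Its factors correspond to the maximal ideals $\mathfrak{m}_q/J_{\mathrm{bool}}$ for $q \in \{0,1\}^n$; here one checks $J_{\mathrm{bool}} \subseteq \mathfrak{m}_q$, since $x_i^2 - x_i$ vanishes at $q_i \in \{0,1\}$.

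In a finite product of fields every ideal is a product of coordinate ideals. Hence the image $\bar I$ equals the intersection of the $\bar{\mathfrak{m}}_q$ over exactly those coordinates $q$ at which every element of $\bar I$ vanishes. That set of coordinates is precisely $\Var(I)$. Pulling back along the surjection $R \to R/J_{\mathrm{bool}}$ commutes with intersections and preserves containment of $J_{\mathrm{bool}}$. This gives $I = \bigcap_{p \in \Var(I)} \mathfrak{m}_p$.

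Finally, I need to check that this is a primary decomposition, and in fact the irredundant one. Each $\mathfrak{m}_p$ is maximal, hence prime and primary. The $\mathfrak{m}_p$ are pairwise distinct because the points are distinct. The decomposition is irredundant because dropping $\mathfrak{m}_p$ enlarges the zero set by $p$. Equivalently, by the Chinese Remainder Theorem $R/\bigcap_{q \ne p}\mathfrak{m}_q$ has dimension $|\Var(I)|-1$, which differs from $\dim_\kk R/I = |\Var(I)|$.

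The main obstacle is the middle step: justifying that the ideals of a product of fields are exactly the coordinate ideals, and matching the surviving coordinates with $\Var(I)$ rather than some other set. I would handle this with idempotents: an ideal $\mathfrak a \subseteq \kk^N$ equals $e\,\kk^N$, where $e$ is the sum of the coordinate idempotents $e_q$ with $e_q \in \mathfrak a$.
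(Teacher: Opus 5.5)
Your proof is correct, but it takes a different route from the paper's. The paper disposes of the corollary by citing the general structure theorem for zero-dimensional radical ideals: such an ideal is the intersection of the maximal ideals of its $\overline{\kk}$-points. It then observes that every point of $\Var(I)$ is $\kk$-rational, so those maximal ideals are the $\mathfrak{m}_p$.

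You instead stay inside $R/J_{\mathrm{bool}}\cong\kk^{2^n}$. An ideal of a finite product of fields is spanned by the coordinate idempotents it contains. So $\bar I$ is the intersection of the coordinate maximal ideals indexed by the common zeros of $I$ in $\{0,1\}^n$, which is exactly $\Var(I)$. Pulling back along $R\to R/J_{\mathrm{bool}}$ is legitimate because $J_{\mathrm{bool}}$ lies in $I$ and in every $\mathfrak{m}_p$, and it gives $I=\bigcap_p\mathfrak{m}_p$.

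The paper's argument is shorter and applies to any zero-dimensional radical ideal with rational points. However, it rests on the Nullstellensatz over $\overline{\kk}$ plus the standard contraction back to $\kk$, which you flagged and the paper leaves implicit. Your argument uses the Boolean generators essentially. In return it is self-contained and valid over an arbitrary field (including $\mathbb{F}_2$). It also does not need radicality as an input: since an intersection of maximal ideals is radical, it reproves \cref{thm:radical}(b) along the way. Finally, your CRT dimension count establishes irredundance of the decomposition, which the paper does not address.
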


\begin{proof}
A zero-dimensional radical ideal in a polynomial ring over a
field has primary decomposition given by the intersection of the
maximal ideals of its $\overline{\kk}$-points
(\cite{Eisenbud95}, Corollary~2.12 and Proposition~3.10;
\cite{CLO15}, Chapter~4, \S7).  All points of $\Var(I)$ are
$\kk$-rational by \cref{prop:zero-dim}, so the maximal ideals
are the stated ones.
\end{proof}

\subsection{Generator counts}

The number of generators of each component is:
\begin{align*}
  |J_{\mathrm{bool}}| &= n = r + 2m, \\
  |J_{\mathrm{sel}}|  &= 3, \\
  |K_{\mathrm{compat}}| &= r \cdot m^2 - \sum_{i=1}^r |\Pairs(f_i)|.
\end{align*}

\begin{example}\label{ex:clickchem}
For the standard bioorthogonal click chemistry system with
$r = 8$ families (SPAAC, CuAAC, IEDDA, Thiol-Maleimide, Oxime,
Hydrazone, Staudinger, Thiol-ene), $m = 17$ handle types, and
$\sum |\Pairs(f_i)| = 30$ compatible pairs, we have
$n = 42$ variables and $|I| = 42 + 3 + (8 \cdot 289 - 30) = 2327$
generators.  The variety has $|\Var(I)| = 30$ points.
\end{example}

\begin{example}[Running example: Kadcyla redesign by dual click]\label{ex:kadcyla}
We instantiate the framework on the ADC design problem described
in \cref{sec:intro}, using as our concrete working conjugate the
dual-click redesign of Kadcyla depicted in \cref{fig:kadcyla}:
the maytansinoid payload DM1, a bifunctional PEG linker carrying
a tetrazine at one end and an azide at the other, and an
acetyl-lysine stand-in for the engineered residue on
trastuzumab.  Trastuzumab carries a cyclopropene handle installed
at that engineered residue; the payload DM1 carries a DBCO
handle; and the two clicks join them via the linker, replacing
the single thioether junction of the original Kadcyla conjugate
with two bioorthogonal junctions.
The assembly is a two-step plan
\begin{align*}
  (f_1, h_1, h'_1)
    &\;=\; (\mathrm{IEDDA},\; \text{cyclopropene},\; \text{tetrazine}),
    &&\text{step 1: Ab + linker}, \\
  (f_2, h_2, h'_2)
    &\;=\; (\mathrm{SPAAC},\; \text{azide},\; \text{DBCO}),
    &&\text{step 2: linker + DM1}.
\end{align*}
Each step is a feasible triple in $\Var(I)$ for
the click chemistry instance of \cref{ex:clickchem}: the pair
$(\text{cyclopropene}, \text{tetrazine}) \in \Pairs(\mathrm{IEDDA})$,
and $(\text{azide}, \text{DBCO}) \in \Pairs(\mathrm{SPAAC})$.  The
two steps are orthogonal because the handle set
$\{\text{cyclopropene},\allowbreak \text{tetrazine},\allowbreak
\text{azide},\allowbreak \text{DBCO}\}$
contains no cross-reactive pair: cyclopropene and tetrazine are
diagnostic for IEDDA (\cref{def:diagnostic}; see
\cref{sec:elimination}), and azide and DBCO do not react with
either of them in the absence of copper or strain mismatch.
The plan is therefore a point of the \emph{multi-step variety}
$\Var(I^{(2)}) \cap \{0,1\}^{2n}$ defined in
\cref{sec:multistep}; the four handles span an edge of the
orthogonality graph $G_\perp$ (\cref{thm:ortho}), which
illustrates why this design is feasible and, as we will see, why
a ninth reaction family would have to satisfy a specific
algebraic condition (\cref{prop:omega5}) to extend such
a plan to three orthogonal steps.  We return to this example after
each principal theorem to interpret its algebraic content on this
concrete ADC design problem.
\end{example}

\section{Gr\"obner Bases and the Fibre Decomposition}\label{sec:groebner}

\subsection{The shape lemma}

Since $I$ is a radical zero-dimensional ideal, the reduced
Gr\"obner basis of $I$ in lex order has triangular form
(the \emph{shape lemma}; see~\cite{CLO15}, Chapter~2, \S6).

\begin{theorem}[Shape Lemma for Assembly Ideals]\label{thm:shape}
Let $\prec$ be the lex ordering on $R$ with
$f_1 \succ \cdots \succ f_r \succ h_1 \succ \cdots \succ h_m
\succ h'_1 \succ \cdots \succ h'_m$.
The reduced Gr\"obner basis $\GB_\prec(I)$ has exactly $n$ elements,
each introducing one new leading variable.  The last element is a
univariate Boolean polynomial $h'^2_m - h'_m$.
\end{theorem}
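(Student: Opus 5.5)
The plan is to read off the reduced lex Gröbner basis from the elimination chain
\[
  I_j \;=\; I \cap \kk[x_j, \ldots, x_n],
\]
where $x_1 \succ \cdots \succ x_n$ lists the variables $f_1, \ldots, f_r, h_1, \ldots, h_m, h'_1, \ldots, h'_m$ in the order of the statement. By the Elimination Theorem (\cite{CLO15}, Chapter~3), $\GB_\prec(I) \cap \kk[x_j, \ldots, x_n]$ is a Gröbner basis of $I_j$. It therefore suffices to show two things for each $j$. First, some basis element has leading variable $x_j$. Second, exactly one basis element lies in $\kk[x_j, \ldots, x_n] \setminus \kk[x_{j+1}, \ldots, x_n]$. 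Together these give $n$ elements, each introducing one new leading variable.

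\textbf{The last variable.} Start with $I_n = I \cap \kk[h'_m]$. This is the ideal of the projection of $\Var(I)$ to the last coordinate, and it is radical by \cref{thm:radical}. If both values $0$ and $1$ of $h'_m$ occur on $\Var(I)$, then $I_n = \langle h'^2_m - h'_m \rangle$, which is the stated last element. If only one value occurs, then $I_n$ is generated by a linear polynomial instead. So the first step is to use \cref{thm:feasibility} to check which case holds for the instance, and otherwise to assume a non-degeneracy condition on $\Pairs$.

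\textbf{The induction step.} Each $I_j$ is the radical ideal of the projection $\pi_j(\Var(I)) \subseteq \{0,1\}^{n-j+1}$. Over a point $q \in \pi_{j+1}(\Var(I))$, the fibre of $\pi_j \to \pi_{j+1}$ has one or two points. I would then aim for a single polynomial $g_j = x_j^{d} + (\text{lower terms})$ with $d \in \{1, 2\}$ that interpolates these fibres. The natural candidate is obtained by Lagrange interpolation over $\pi_{j+1}(\Var(I))$: take $x_j$ minus an interpolating function where the fibre is a single point, and $x_j^2 - x_j$ where it has two points. This works only if the fibre size is constant along $\pi_{j+1}(\Var(I))$. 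When the fibre size varies, the shape-type description forces extra basis elements with leading monomials $x_j \cdot m$, where $m$ is a monomial in the later variables.

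\textbf{Main obstacle.} I expect the hard step to be the count ``exactly $n$''. If every basis element had a pure-power leading term $x_j^{d_j}$ with $d_j \in \{1, 2\}$, the standard monomials would form a box. By \cref{thm:radical}, this would give $|\Var(I)| = \dim_\kk R/I = \prod_j d_j$, which is a power of~$2$. So, before attempting the general argument, I would test the claim against the count $|\Var(I)| = 30$ of \cref{ex:clickchem}. Since $30$ is not a power of $2$, the fibre sizes cannot be constant for that instance, and the literal count of exactly $n$ elements cannot hold there.

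The defensible version I would then prove has three parts. There is exactly one basis element with pure-power leading term in each variable. The last such element is $h'^2_m - h'_m$. The remaining basis elements have mixed leading monomials whose number equals the number of fibre-size jumps along the elimination chain.
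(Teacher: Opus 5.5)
Your counting argument is correct, and it shows the statement is false as written, so it cannot be proved. In the reduced lex basis of the zero-dimensional ideal $I$, each variable $x_j$ is the leading monomial's sole variable in exactly one element, with leading monomial a pure power $x_j^{d_j}$. Moreover $d_j\in\{1,2\}$ because $x_j^2-x_j\in I$. Hence ``exactly $n$ elements'' forces $\mathrm{in}_\prec(I)=\langle x_1^{d_1},\ldots,x_n^{d_n}\rangle$, so $\dim_\kk R/I=\prod_j d_j$ is a power of $2$. This contradicts $\dim_\kk R/I=|\Var(I)|=30$ from \cref{thm:radical}, which is correct. The last-element claim also fails whenever $h'_m$ is constant on $\Var(I)$: a single feasible triple already gives a basis of $n$ linear polynomials.

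The paper's proof does not get around this. It cites the general shape lemma, but that lemma requires \emph{shape position}: the last variable must take pairwise distinct values on the points of $\Var(I)$. Its conclusion is then a basis $\{x_1-g_1(x_n),\ldots,x_{n-1}-g_{n-1}(x_n),g_n(x_n)\}$ with $\deg g_n=|\Var(I)|$. Since $h'_m\in\{0,1\}$ on $\Var(I)$, shape position fails as soon as $|\Var(I)|>2$. In any case, a univariate element of degree $30$ could not be $h'^2_m-h'_m$. Zero-dimensionality and radicality give shape position only after a generic linear change of coordinates, which is what the infinite field is needed for. Such a change destroys both the Boolean generators and the prescribed variable order. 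Your diagnosis is the right one.

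Your proposed replacement needs one fix. Parts one and two are fine; part two holds iff $h_m$ is the target of some feasible pair and some feasible pair has a target other than $h_m$. Part three, counting ``fibre-size jumps'', is not the right count. Your own elimination setup gives the exact count. Let $D_j\subseteq\pi_{j+1}(\Var(I))$ be the set of points over which both values of $x_j$ occur. For $a,b$ free of $x_j$, some $x_ja+b$ lies in $I_j$ iff $a$ vanishes on $D_j$; take $b$ by interpolation. So $\mathrm{in}_\prec(I_j)$ is generated by $\mathrm{in}_\prec(I_{j+1})$, $x_j\,\mathrm{in}_\prec I(D_j)$ and $x_j^2$. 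The new basis elements at level $j$ are therefore:
\begin{itemize}
\item $x_j^2$, if $D_j\neq\emptyset$;
\item one element with leading monomial $x_jm$ for each minimal generator $m$ of $\mathrm{in}_\prec I(D_j)$ not lying in $\mathrm{in}_\prec(I_{j+1})$.
\end{itemize}
A single level can contribute several such elements. For example, with three variables $x\succ y\succ z$, $\pi_2=\{0,1\}^2$ and $D_1=\{(0,0)\}$, both $xy$ and $xz$ appear as leading monomials.
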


\begin{proof}
The ideal $I$ is zero-dimensional and radical over an infinite
field~$\kk$.  The general shape lemma (cf.~\cite{CLO15}) gives
$|\GB_\prec(I)| = n$ with triangular leading terms.  The Boolean
constraint $h'^2_m - h'_m$ is the unique element whose leading
monomial involves only the last variable.
\end{proof}

Point enumeration by back-substitution from the shape-lemma basis
is $O(|\Var(I)| \cdot n)$ once the basis has been computed.

\subsection{Family-fibre decomposition}

The key structural property of the assembly ideal is that the
compatibility generators for distinct families share no variables
beyond the Boolean and selection constraints.  The selection
constraint $\sum f_i = 1$ partitions the variety into fibres
indexed by~$F$.

\begin{proposition}[Fibre Decomposition]\label{prop:fibre}
The variety decomposes as a disjoint union of family fibres:
\[
  \Var(I) \;=\; \bigsqcup_{f \in F} \Var(I_f)
\]
where $I_f$ is the fibre ideal obtained by setting $f = 1$ and
all other family variables to~$0$.  The fibres are independent:
$|\Var(I)| = \sum_{f \in F} |\Pairs(f)|$.
\end{proposition}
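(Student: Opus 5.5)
The plan is to deduce the decomposition directly from the Feasibility Correspondence (\cref{thm:feasibility}) and read off the cardinality from it. First, I make the fibre ideal precise. For a family $f = f_i$, let $I_f \subseteq \kk[H, H']$ be the image of $I$ under the substitution $f_i \mapsto 1$, $f_j \mapsto 0$ for $j \neq i$. I then regard $\Var(I_f)$ as embedded in $\{0,1\}^n$ by prepending the coordinate vector $e_i \in \{0,1\}^r$ on the family block. Concretely, $I_f$ is generated by three kinds of elements: the Boolean polynomials in the handle variables; the two handle selection constraints (the family selection constraint becomes $1-1=0$); and the monomials $h_a h'_b$ for $(h_a,h_b) \notin \Pairs(f_i)$. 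The generators of $K_{\mathrm{compat}}$ with family index $j \neq i$ map to $0$.

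Second, I prove the set equality. Take any $p \in \Var(I)$. By the proof of \cref{thm:feasibility}, exactly one family coordinate $p_{f_i}$ equals $1$. Hence $p$ satisfies every generator of $I_{f_i}$ after the specialisation, so $p \in \Var(I_{f_i})$. Conversely, take a point $q \in \Var(I_{f_i})$ with family block $e_i$. It satisfies the Boolean and selection generators of $I$, and also the generators $f_j h_a h'_b$ with $j \neq i$, because those vanish when $f_j = 0$. The remaining generators are $f_i h_a h'_b$, and these reduce to the generators of $I_{f_i}$, so $q \in \Var(I)$.

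Disjointness is immediate, because points in different fibres have different family blocks $e_i \neq e_j$. Applying the same argument as in \cref{thm:feasibility} to $I_{f_i}$, its points are exactly the pairs of indicator vectors $(e_a, e_b)$ with $(h_a,h_b) \in \Pairs(f_i)$. Therefore $|\Var(I_f)| = |\Pairs(f)|$, and summing over $F$ gives the count. The independence statement then just says that each fibre is determined by its own $\Pairs(f)$ alone.

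The main obstacle is definitional rather than technical. "Setting $f=1$" produces an ideal in fewer variables, so I have to fix the embedding of its variety back into $\{0,1\}^n$ for the disjoint union to make sense. I would state this convention explicitly at the start. Optionally, one can note the equivalent ideal-theoretic form $I_f = I + \langle f_i - 1, f_j : j \neq i \rangle$, which gives a subvariety of $\Var(I)$ directly. With that form the argument becomes the observation that $\Var(I)$ is the disjoint union of its intersections with the hyperplanes $f_i = 1$, which are disjoint by $J_{\mathrm{sel}}$ and $J_{\mathrm{bool}}$.
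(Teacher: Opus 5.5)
Your proposal is correct and follows essentially the same route as the paper: the Boolean and selection constraints force exactly one family coordinate to equal $1$, substituting $f_i = 1$, $f_j = 0$ gives a fibre ideal in $\kk[H,H']$ whose points are exactly the indicator pairs of $\Pairs(f_i)$, and the selection constraint gives disjointness. Your explicit embedding of $\Var(I_f)$ back into $\{0,1\}^n$ and your two-inclusion check only make precise what the paper's short proof leaves implicit.
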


\begin{proof}
The selection constraint $\sum f_i = 1$ with the Boolean
constraints implies that exactly one $f_i$ equals~1 at any
point of~$\Var(I)$.  Substituting $f_i = 1$, $f_j = 0$ for
$j \neq i$ into $K_{\mathrm{compat}}$ gives the fibre ideal
$I_{f_i} \subset \kk[H, H']$, whose variety is
$\{(h, h') : (h, h') \in \Pairs(f_i)\}$.  The disjointness
of the fibres follows from the selection constraint.
\end{proof}

\begin{remark}\label{rem:fibre-scalability}
The fibre decomposition reduces Gr\"obner basis computation
from a single ring with $n = r + 2m$ variables to $r$~independent
computations in $2m$ variables each.  For our system this
replaces one 42-variable computation with eight 34-variable
computations.
\end{remark}

\subsection{The extension theorem}

The fibre decomposition yields an exact formula for how $\Var(I)$
grows under adjunction of a new family.

\begin{theorem}[Functorial Extension]\label{thm:extension}
Let $I$ be the assembly ideal for families $F$ with handle universe~$H$,
and let $f'$ be a new family with $k$~compatible handle pairs
$\Pairs(f') \subset H \times H$.  Let $I'$ be the assembly
ideal for $F \cup \{f'\}$ over~$H$.  Then:
\[
  |\Var(I')| \;=\; |\Var(I)| + k.
\]
Moreover, $\Var(I) \subset \Var(I')$ as a sub-configuration
(the existing feasible triples are preserved).
\end{theorem}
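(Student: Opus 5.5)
The plan is to reduce everything to the Feasibility Correspondence (\cref{thm:feasibility}) and the Fibre Decomposition (\cref{prop:fibre}), both of which apply verbatim to the enlarged structure $(F \cup \{f'\}, H, \Pairs)$. The ring changes from $R$ to $R' = R[f']$, with one additional family variable. Applying \cref{prop:fibre} to $I'$ gives
\[
  |\Var(I')| \;=\; \sum_{f \in F \cup \{f'\}} |\Pairs(f)| \;=\; \sum_{f \in F} |\Pairs(f)| + |\Pairs(f')| \;=\; |\Var(I)| + k,
\]
where the last step uses \cref{prop:fibre} again for $I$. I would state explicitly that the compatibility relations $\Pairs(f)$ for $f \in F$ are unchanged, since the handle universe $H$ is fixed. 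This is the only hypothesis needed for the sum to split.

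For the inclusion $\Var(I) \subset \Var(I')$, the two varieties live in different ambient spaces ($\{0,1\}^n$ versus $\{0,1\}^{n+1}$). So I would make the embedding precise as $\iota\colon p \mapsto (p, 0)$, setting the new coordinate $f' = 0$. I would then check directly that $\iota(p)$ satisfies each generator of $I'$:
\begin{itemize}
\item The Boolean generators are satisfied, since all coordinates of $\iota(p)$ are $0$ or $1$.
\item The family selection constraint $\sum f_i + f' - 1$ reduces to the old one because $f' = 0$; the handle selection constraints are unchanged.
\item The old generators of $K_{\mathrm{compat}}$ vanish because $p \in \Var(I)$.
\item The new generators $f' h_a h'_b$ vanish because $f' = 0$.
\end{itemize}

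Equivalently, under \cref{thm:feasibility} the embedding is just the inclusion of the set of feasible triples over $F$ into the set of feasible triples over $F \cup \{f'\}$. The image of $\iota$ is exactly the union of the fibres $\Var(I'_f)$ for $f \in F$, and $\Var(I'_{f'})$ is the complementary fibre of size $k$. This also recovers the count independently, as a sanity check.

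The main obstacle is not a hard step but a matter of well-posedness: the statement says ``$\Var(I) \subset \Var(I')$'' although the ambient spaces differ. I would therefore spend the care on fixing the embedding $\iota$ and on noting that $I' \cap \kk[F,H,H'] \supseteq I$ is not what is being claimed. The claim concerns points, not ideals; indeed $I'$ restricted to $f'=0$ recovers $I$, which is exactly the content of the point-level check above.
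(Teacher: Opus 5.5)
Your proposal is correct and follows essentially the paper's route. Both arguments apply the fibre decomposition (\cref{prop:fibre}) to the enlarged structure, observe that the fibres of the old families are unchanged because every generator involving $f'$ vanishes at $f'=0$, and add the $k$ points of the new fibre; your explicit embedding $\iota\colon p \mapsto (p,0)$ with a generator-by-generator check is a welcome sharpening of the paper's informal ``$\Var(I) \subset \Var(I')$'', which the paper leaves implicit in the identity $\Var(I'_f) = \Var(I_f)$ for $f \in F$.
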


\begin{proof}
By the fibre decomposition (\Cref{prop:fibre}),
$\Var(I') = \bigsqcup_{f \in F \cup \{f'\}} \Var(I'_f)$.
For every existing family $f \in F$, the fibre ideal $I'_f$
is identical to~$I_f$ because the generators involving $f'$
vanish when $f' = 0$.  Hence $\Var(I'_f) = \Var(I_f)$ for
$f \in F$.  The new fibre $\Var(I'_{f'})$ consists of the
$k$~pairs in $\Pairs(f')$ by definition.  Summing:
\[
  |\Var(I')| = \sum_{f \in F} |\Var(I_f)| + |\Pairs(f')|
  = |\Var(I)| + k.\]
\end{proof}

\begin{remark}\label{rem:extension-omega}
\Cref{thm:extension} governs the single-step variety
$\Var(I)$.  It does \emph{not} imply that $\omega(G_\perp)$
increases: if $f'$ shares a handle with an existing family,
the new edge in~$G_\perp$ may leave the maximum independent
set size unchanged.  See \cref{sec:omega5} for a complete analysis.
\end{remark}

\section{Elimination Ideals and Handle Diagnosticity}\label{sec:elimination}

The fibre decomposition treats families independently; elimination
theory reveals the \emph{inter-family} structure by projecting
$\Var(I)$ onto coordinate subspaces.

\subsection{Three elimination ideals}

\begin{definition}\label{def:elim}
We define three elimination ideals:
\begin{align}
  E_{H,H'} &= I \cap \kk[H, H']
    &&\text{(eliminate families)}, \label{eq:ehh} \\
  E_F &= I \cap \kk[F]
    &&\text{(eliminate all handles)}, \label{eq:ef} \\
  E_{F,H} &= I \cap \kk[F, H]
    &&\text{(eliminate target handles)}. \label{eq:efh}
\end{align}
\end{definition}

By the closure theorem~\cite{CLO15}, the varieties of these
elimination ideals are the Zariski closures of the coordinate
projections of~$\Var(I)$.

Eliminating the family variables collapses the fibre
structure and reveals which handle pairs are compatible
under \emph{any} family.  Concretely, the variety
$\Var(E_{H,H'}) \subseteq \{0,1\}^{2m}$ is the set of
pairs $(h, h')$ that appear in $\Pairs(f)$ for at
least one~$f$.  We read this as the edge set of a bipartite
graph $G_{\mathrm{rxn}}$ on $H \times H'$: the edge
$(h, h')$ exists if and only if the pair is compatible
under some family.  This graph is an invariant of the
compatibility structure, independent of which family
realises a given pair.

The variety $\Var(E_F)$ records the ``active'' families
(those with $\Pairs(f) \neq \varnothing$); in the Boolean
quotient it is simply $\{e_f : f \in F,\; \Pairs(f) \neq
\varnothing\}$.

The elimination ideal $E_{F,H}$ is the most structured: it
detects \emph{handle diagnosticity}.

\subsection{The diagnosticity theorem}

\begin{definition}\label{def:diagnostic}
For a handle $h \in H$, define
$\Fam(h) = \{f \in F : \exists\, h' \text{ s.t.\ }
(h, h') \in \Pairs(f)\}$.
A handle $h$ is \emph{diagnostic} if $|\Fam(h)| = 1$.
\end{definition}

A diagnostic handle has a unique pre-image under the family
map: $|\Fam(h)| = 1$ means the source handle determines the
family coordinate of every point in~$\Var(I)$ supported on~$h$.

\begin{theorem}[Handle Diagnosticity]\label{thm:diagnostic}
Let $h \in H$ be a diagnostic handle with $\Fam(h) = \{f_0\}$.
Then the polynomial $h \cdot (1 - f_0)$ lies in the elimination
ideal $E_{F,H} = I \cap \kk[F, H]$.

Conversely, if $h$ is non-diagnostic ($|\Fam(h)| > 1$), then
no polynomial of the form $h \cdot (1 - f_0)$ for any $f_0 \in F$
lies in~$E_{F,H}$.
\end{theorem}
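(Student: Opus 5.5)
The plan is to reduce both directions to pointwise evaluation on the finite set $\Var(I)$. The key tool is \cref{cor:primary}: since $I = \bigcap_{p \in \Var(I)} \mathfrak{m}_p$ with every $p$ a $\kk$-rational point, a polynomial $g \in R$ lies in $I$ if and only if $g(p) = 0$ for every $p \in \Var(I)$. This follows from radicality (\cref{thm:radical}) together with rationality of the points, and it avoids any appeal to the Nullstellensatz over a non-closed field. Since $E_{F,H} = I \cap \kk[F,H]$, and $h\cdot(1-f_0)$ visibly lies in $\kk[F,H]$, membership in $E_{F,H}$ is the same as membership in $I$. Both directions therefore become statements about the values of $h(1-f_0)$ at the points described by \cref{thm:feasibility}.

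For the forward direction, I would take an arbitrary $p \in \Var(I)$. By \cref{thm:feasibility} it is the indicator of a feasible triple $(f, h_a, h'_b)$ with $(h_a, h_b) \in \Pairs(f)$.
\begin{enumerate}[nosep]
\item If $h_a \neq h$, then the coordinate $h$ of $p$ is $0$, so $h(1-f_0)$ vanishes at $p$.
\item If $h_a = h$, then $f \in \Fam(h) = \{f_0\}$ by \cref{def:diagnostic}. Hence the $f_0$-coordinate of $p$ is $1$, and $1 - f_0$ vanishes at $p$.
\end{enumerate}
So $h(1-f_0)$ vanishes on all of $\Var(I)$. By the reduction above it lies in $I$, and hence in $E_{F,H}$.

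For the converse, I would fix $h$ with $|\Fam(h)| \ge 2$ and an arbitrary $f_0 \in F$. Because $\Fam(h)$ has at least two elements, I can choose $f_1 \in \Fam(h)$ with $f_1 \neq f_0$; this works whether or not $f_0$ itself belongs to $\Fam(h)$. By \cref{def:diagnostic} there is a handle $h'$ with $(h, h') \in \Pairs(f_1)$, and \cref{thm:feasibility} gives the corresponding point $p \in \Var(I)$ with $f_1 = 1$, $h = 1$, $h' = 1$ and all other coordinates zero. In particular the $f_0$-coordinate of $p$ is $0$, so $h(1-f_0)$ evaluates to $1 \cdot (1 - 0) = 1 \neq 0$ at $p$. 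Thus $h(1-f_0) \notin I$, and a fortiori $h(1-f_0) \notin E_{F,H}$.

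The only step I expect to need care is the initial reduction, namely that vanishing on $\Var(I)$ implies membership in $I$. Over $\kk = \QQ$ this is not automatic from the Nullstellensatz. Here it follows because \cref{cor:primary} writes $I$ as an intersection of maximal ideals of $\kk$-rational points, and $g \in \mathfrak{m}_p$ exactly when $g(p) = 0$. I would state this reduction explicitly as the first step. Everything after it is the bookkeeping described above.
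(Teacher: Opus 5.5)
Your proof is correct and follows the paper's argument: you evaluate $h(1-f_0)$ at the indicator points of feasible triples, and you use the fact that $I$ is the full vanishing ideal of its finitely many $\kk$-rational points. The differences are cosmetic: you justify that membership criterion through \cref{cor:primary} rather than the Strong Nullstellensatz, and you merge the paper's two-case converse into a single case by choosing a family in $\Fam(h)\setminus\{f_0\}$ directly.
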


\begin{proof}
If $\Fam(h) = \{f_0\}$, then at every point
$p = (f, h^*, h'^*) \in \Var(I)$ with $h^* = h$, we must have
$f = f_0$.  On $\Var(I) \subseteq \{0,1\}^n$, this means
$h \cdot (1 - f_0)$ vanishes at every point of~$\Var(I)$.
Since $I$ is radical and $\Var(I)$ consists of $\kk$-rational
points (\cref{prop:zero-dim}), by the Strong Nullstellensatz
(\cite{CLO15}, Chapter~4, \S2, Theorem~6;
\cite{Eisenbud95}, Theorem~1.6) vanishing on $\Var(I)$ implies membership
in~$I$, and since $h \cdot (1 - f_0) \in \kk[F, H]$, it
lies in $I \cap \kk[F, H] = E_{F,H}$.

For the converse, suppose $|\Fam(h)| > 1$ and fix any
$f_0 \in F$.  If $f_0 \notin \Fam(h)$, then every point
of~$\Var(I)$ with $h = 1$ has $f_0 = 0$, so
$h \cdot (1 - f_0) = h$ on $\Var(I)$; but $h$ does not vanish
on~$\Var(I)$ (any triple with source handle~$h$ witnesses this),
so $h \cdot (1 - f_0) \notin I$.  If $f_0 \in \Fam(h)$, then
since $|\Fam(h)| > 1$ there exists $f_a \in \Fam(h)$ with
$f_a \neq f_0$ and a triple $(f_a, h, h'_a) \in \Var(I)$;
at this point $h = 1$ and $f_0 = 0$, giving
$h \cdot (1 - f_0) = 1 \neq 0$, so again
$h \cdot (1 - f_0) \notin I \supseteq E_{F,H}$.
\end{proof}

\paragraph{Interpretation.}
\Cref{thm:diagnostic} characterises handle diagnosticity via the
elimination ideal: $h$ is diagnostic if and only if there exists
$f_0 \in F$ with $h \cdot (1 - f_0) \in E_{F,H}$.
The non-diagnostic handles are precisely those with multiplicity $> 1$
in the family map, and they are the source of all inter-family coupling
in the multi-step ideal (\cref{sec:multistep}).  In the bioorthogonal
instance, the five non-diagnostic handles---azide ($|\Fam| = 3$),
aldehyde, ketone, norbornene, and thiol (each $|\Fam| = 2$)---generate
the four bottleneck corridors that bound $\omega(G_\perp)$.

\begin{example}\label{ex:diagnostic}
In the standard 8-family system, 12 of 17 handles are
diagnostic.  The five non-diagnostic handles are:
\begin{center}
\begin{tabular}{ll}
\toprule
Handle & Families \\
\midrule
azide & SPAAC, CuAAC, Staudinger \\
aldehyde & Oxime, Hydrazone \\
ketone & Oxime, Hydrazone \\
norbornene & IEDDA, Thiol-ene \\
thiol & Thiol-Maleimide, Thiol-ene \\
\bottomrule
\end{tabular}
\end{center}
The elimination ideal $E_{F,H}$ for the Oxime--Hydrazone
subsystem contains the polynomial
\[
  f_{\mathrm{Hyd}} \cdot h_{\mathrm{ald}}
  + f_{\mathrm{Hyd}} \cdot h_{\mathrm{ket}}
  - f_{\mathrm{Hyd}} + h_{\mathrm{hydrazine}} \;=\; 0
\]
which encodes the constraint: \emph{the family variable
$f_{\mathrm{Hyd}}$ equals~1 only if the source handle is a
carbonyl (aldehyde or ketone) and the target handle is hydrazine}.
\end{example}

\paragraph{On the running example.}
Three of the four handles of the Kadcyla plan---cyclopropene,
tetrazine, and DBCO---are diagnostic: each appears in only one
family (IEDDA, IEDDA, and SPAAC respectively).  \Cref{thm:diagnostic}
then yields three algebraic certificates,
\[
  \text{cyp}\,(1 - f_{\mathrm{IEDDA}}),\quad
  \text{tetz}\,(1 - f_{\mathrm{IEDDA}}),\quad
  \text{DBCO}\,(1 - f_{\mathrm{SPAAC}}) \;\in\; E_{F,H}.
\]
The remaining handle, azide, is \emph{non}-diagnostic:
it appears in three families (SPAAC, CuAAC, Staudinger),
so its presence in a design does not by itself pin down
the family.  This is why dual-click ADC protocols pair
azide with DBCO rather than a terminal alkyne: DBCO
is diagnostic for SPAAC, so the family of the azide
step is fixed by its partner.  The algebra thereby recovers, as a
theorem about $E_{F,H}$, the practical chemistry argument for
the choice of linker.

\section{Algebraic Statistics on the Assembly Variety}\label{sec:statistics}

\subsection{The log-linear model}

We equip $\Var(I)$ with a probability distribution by assigning
a non-negative weight to each point.  The natural parametric
family is the \emph{log-linear model}:
\[
  P(f, h, h') \;=\; \frac{1}{Z}\,
    \exp(\theta_f + \theta_h + \theta_{h'})
  \qquad\text{for } (f, h, h') \in \Var(I),
\]
where $\theta = (\theta_{f_1}, \ldots, \theta_{f_r},
\theta_{h_1}, \ldots, \theta_{h_m},
\theta_{h'_1}, \ldots, \theta_{h'_m}) \in \RR^n$
are the natural parameters and
$Z = \sum_{(f,h,h') \in \Var(I)} \exp(\theta_f + \theta_h + \theta_{h'})$
is the normalising constant (partition function).
The parametrisation has a direct probabilistic reading:
setting $p_f = e^{\theta_f}$, $p_h = e^{\theta_h}$, and
$p_{h'} = e^{\theta_{h'}}$, the unnormalised weight of a
feasible triple is the product $p_f \cdot p_h \cdot p_{h'}$
of three marginal propensities---one per coordinate---and
$Z = \sum_{(f,h,h') \in \Var(I)} p_f \, p_h \, p_{h'}$
ensures that the weights sum to~1.
In other words, the natural parameters $\theta$ are the
\emph{log-propensities} and $Z$ is the sum over feasible
triples of their coordinate-wise products.

The log-linear model is the canonical choice for three reasons.
First, on a finite support set such as $\Var(I) \subseteq \{0,1\}^n$,
it is the maximum-entropy distribution subject to matching the three
coordinate marginals (family, source handle, target handle)
(cf.~\cite{Csiszar75,WainwrightJordan08}); it therefore assumes
the least structure beyond what the marginals impose.  Second, its additive sufficient statistic
$\theta_f + \theta_h + \theta_{h'}$ mirrors the fibre decomposition
of~$\Var(I)$: within each family fibre, $h$ and~$h'$ contribute
independently, so the model respects the algebraic product structure
of~$R/I$ established in \cref{sec:ring}.  Third, log-linear models
on integer design matrices are precisely the objects whose
implicit descriptions are toric ideals~\cite{PS05,DSS09}; choosing a
different parametric family would sever the connection to toric
geometry and the binomial-relation analysis that follows.

\begin{definition}[Design Matrix]\label{def:design}
The \emph{design matrix} $A \in \{0,1\}^{|\Var(I)| \times n}$
has rows indexed by feasible triples.  The row for $(f_i, h_a, h'_b)$
has a~1 in positions $i$, $r + a$, and $r + m + b$, and 0~elsewhere.
\end{definition}

\begin{proposition}\label{prop:rank}
For the 8-family, 17-handle click-chemistry instance,
$A \in \{0,1\}^{30 \times 42}$ with $\rank(A) = 28$.
The model dimension is $\dim(M) = \rank(A) - 1 = 27$.
\end{proposition}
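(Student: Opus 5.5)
The plan is to compute $\rank(A)$ structurally rather than by brute-force row reduction. View $A$ as the incidence matrix of a $3$-uniform hypergraph, or more usefully as a signed-free incidence structure: each row $(f_i,h_a,h'_b)$ is the sum $e_{f_i}+e_{h_a}+e_{h'_b}$ of three standard basis vectors, one in each coordinate block. The rank of $A$ equals the dimension of the span of these row vectors. I will split the count into two parts: $\rank(A)=n-\dim\ker(A)$, where $\ker(A)$ consists of weight vectors $\theta\in\QQ^{42}$ with $\theta_f+\theta_h+\theta_{h'}=0$ on every feasible triple. This is the natural space of gauge freedoms of the log-linear model, so the claim $\rank(A)=28$ is equivalent to $\dim\ker(A)=14$.

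The first step is to discard unused coordinates. Any family, source handle, or target handle that appears in no feasible triple gives a zero column and contributes one kernel dimension for free. From the data of \cref{ex:clickchem} and \cref{ex:diagnostic}, I will list which of the $17$ handles occur on the source side and which occur on the target side among the $30$ pairs. Many handles act only as sources or only as targets, so several of the $34$ handle columns vanish identically.

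The second step is to handle the remaining columns. Form the tripartite "support graph" whose vertices are the used families, used source handles, and used target handles, with each feasible triple joining its three vertices. On each connected component, the kernel conditions force $\theta$ to be determined by two free gauge parameters. These come from shifting $\theta_f\mapsto\theta_f+s+t$, $\theta_h\mapsto\theta_h-s$, $\theta_{h'}\mapsto\theta_{h'}-t$ consistently across the component. Beyond these, the kernel can be larger only if there are independent cycles that are not forced. Concretely, I will show that for a component $C$, the rank of the restricted matrix is $|C|-2$ minus the number of extra kernel vectors. I will then compute this componentwise using the fibre decomposition (\cref{prop:fibre}): within a family fibre, the rows are $e_f+e_h+e_{h'}$ with $f$ fixed. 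Their span is governed by the bipartite graph $\Pairs(f)$ on $H\times H'$ plus the common $e_f$.

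The main obstacle is the bookkeeping. The families are glued together through the non-diagnostic handles (azide, aldehyde, ketone, norbornene, thiol), which merges fibres into larger components, and I must count components and zero columns correctly so that they total exactly $42-28=14$ kernel dimensions. To make this airtight, I will fall back on the exact computation over $\QQ$ that the paper uses throughout: row-reduce the explicit $30\times 42$ matrix in SymPy to certify $\rank(A)=28$. The structural count then serves as an explanation of the number rather than its proof.

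Finally, since every row sums to $3$, the all-ones vector $\mathbf 1$ lies in the row space. The statistical model $M$ is the image of the moment map in the probability simplex, whose normalisation removes one dimension. This gives $\dim(M)=\rank(A)-1=27$ by the standard dimension formula for log-linear models \cite{PS05,DSS09}.
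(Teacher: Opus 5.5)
The step that actually certifies $\rank(A)=28$ in your proposal is exact row reduction of the explicit $30\times 42$ matrix over $\QQ$. That is exactly the paper's proof, so the statement stands. The structural count you lead with, however, is wrong as stated and would not reproduce $28$ if carried out.

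First, no column of $A$ vanishes. All $17$ handles have $|\Fam(h)|\ge 1$ (\cref{ex:diagnostic}), so every source column is used. In this instance every chemical pair is also listed in both orientations: compare $(\mathrm{Ox},\mathrm{ald},\mathrm{aox})$ with $(\mathrm{Ox},\mathrm{aox},\mathrm{ald})$ in \cref{thm:toric}, or azide$\to$DBCO in \cref{ex:kadcyla} with DBCO$\to$azide in \cref{thm:ortho}. So every target column is used as well.

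Second, the kernel is not ``two gauge parameters per component, enlarged only by cycles''. The support hypergraph has three components:
the azide block (SPAAC, CuAAC, Staudinger; $13$ columns, $8$ rows);
the IEDDA--Thiol-ene--Thiol-Maleimide block, glued by norbornene and thiol ($19$ columns, $14$ rows);
and the Oxime--Hydrazone block ($10$ columns, $8$ rows).
By rank--nullity the per-component count is $\dim\ker=\#\mathrm{columns}-\#\mathrm{rows}+\dim(\text{row relations})$. Since each triple imposes only one equation on three coordinates, this is typically much larger than $2$, and here it gives $5+5+4=14$. Your baseline $2\cdot 3=6$ misses eight dimensions. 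The surplus in the first two blocks has nothing to do with cycles: their rows are linearly independent, and there are simply many more coordinates than constraints. Your two gauge directions per component are only a lower bound.

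A structural argument that does work counts row relations instead, using $\rank A=30-\dim\ker A^{T}$. In the azide and IEDDA/thiol blocks the rows can be peeled off one at a time, each containing a coordinate used by no remaining row. First come the single-occurrence handle coordinates such as target DBCO or source maleimide, and afterwards the four triples involving norbornene. So those $8+14=22$ rows are independent. In the carbonyl block the eight rows span only a $6$-dimensional space, and the two relations are exactly the exponent vectors of the binomials in \cref{thm:toric}. Hence $\rank A=22+6=28$, which also explains the codimension $2$ of the toric ideal.

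Finally, fix a slip in your last step. ``Every row sums to $3$'' means $A\mathbf{1}_{42}=3\,\mathbf{1}_{30}$, so $\mathbf{1}_{30}$ lies in the \emph{column} space of $A$, which is where $\log p=A\theta-(\log Z)\mathbf{1}_{30}$ lives. The vector $\mathbf{1}_{42}$ is not in the row space: any combination of rows has equal family, source and target block sums, while $\mathbf{1}_{42}$ has block sums $8,17,17$. With that correction, $\dim M=\rank A-1=27$ follows, a justification the paper leaves implicit.
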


\begin{proof}
Direct computation of the rank of the $30 \times 42$ binary
matrix over~$\QQ$.
\end{proof}

The model dimension equals the dimension of the linear span
$\langle \Var(I) \rangle$ of the 30~points of~$\Var(I)$
inside~$\RR^{42}$.

\subsection{The toric ideal}

The log-linear model $M \subset \Delta_{29}$ is the image
of the monomial map
\[
  \varphi \colon (\RR_{>0})^n \to \Delta_{29}, \qquad
  (w_f, w_h, w_{h'}) \mapsto
  \Bigl(\frac{w_{f_i} w_{h_a} w_{h'_b}}{Z}\Bigr)_{(i,a,b) \in \Var(I)}.
\]

\begin{definition}
The \emph{toric ideal} of the model is
$I_A = \ker(\varphi) \subset \kk[p_1, \ldots, p_{30}]$,
the ideal of polynomial relations among the
``probability coordinates'' $p_\alpha$, $\alpha \in \Var(I)$.
\end{definition}

The toric ideal is generated by binomials
$p_\alpha p_\beta - p_\gamma p_\delta$ whenever the
row sums $a_\alpha + a_\beta = a_\gamma + a_\delta$
in~$\ZZ^n$.

\begin{theorem}[Toric Ideal Structure for the 8-family instance]\label{thm:toric}
The toric ideal $I_A$ of the assembly log-linear model for the
8-family, 17-handle system is generated by exactly two binomials:
\begin{align*}
  p_{(\mathrm{Ox},\,\mathrm{ald},\,\mathrm{aox})} \cdot
  p_{(\mathrm{Hy},\,\mathrm{ket},\,\mathrm{hyz})}
  &\;=\;
  p_{(\mathrm{Ox},\,\mathrm{ket},\,\mathrm{aox})} \cdot
  p_{(\mathrm{Hy},\,\mathrm{ald},\,\mathrm{hyz})}, \\[3pt]
  p_{(\mathrm{Ox},\,\mathrm{aox},\,\mathrm{ald})} \cdot
  p_{(\mathrm{Hy},\,\mathrm{hyz},\,\mathrm{ket})}
  &\;=\;
  p_{(\mathrm{Ox},\,\mathrm{aox},\,\mathrm{ket})} \cdot
  p_{(\mathrm{Hy},\,\mathrm{hyz},\,\mathrm{ald})}.
\end{align*}
Both binomials correspond to the exchange of two shared
handle types between the same pair of families.
\end{theorem}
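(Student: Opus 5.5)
The plan is to compute the kernel of $A^{\top}$ over~$\ZZ$ and then show that the Markov basis consists of exactly the two displayed moves.

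\textbf{Step 1 (rank and the lattice).} By \cref{prop:rank}, $\rank(A) = 28$ with 30 rows. So the lattice $L = \ker_\ZZ(A^{\top}) = \{u \in \ZZ^{30} : \sum_\alpha u_\alpha a_\alpha = 0\}$ has rank $30 - 28 = 2$. The toric ideal is $I_A = \langle p^{u^+} - p^{u^-} : u \in L\rangle$. The two displayed binomials correspond to vectors $u_1, u_2 \in L$.

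\textbf{Step 2 (the two moves lie in $L$).} Check $u_1 \in L$ directly. Both sides of the first binomial have column support $\{\mathrm{Ox}, \mathrm{Hy}, \mathrm{ald}, \mathrm{ket}, \mathrm{aox}', \mathrm{hyz}'\}$ with multiplicity one each. The second binomial is the same exchange with source and target roles swapped, using aminooxy and hydrazine as sources and the carbonyls as targets.

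\textbf{Step 3 (the moves span $L$ over $\ZZ$).} The supports of $u_1$ and $u_2$ are disjoint. Each vector also has a coordinate equal to $\pm1$ that is absent from the other. Hence $\{u_1, u_2\}$ is a saturated basis of the rank-2 lattice~$L$, and not just a finite-index sublattice.

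\textbf{Step 4 (lattice basis implies Markov basis).} In general a lattice basis does not generate $I_A$. Here I would use the block structure: the row vectors $a_\alpha$ split into components whose interaction is supported only on the Oxime--Hydrazone handles. Concretely, every $u \in L$ is an integer combination $c_1 u_1 + c_2 u_2$, and $u_1$, $u_2$ have disjoint supports. So $I_A$ is the sum of the toric ideals of two independent $2\times2$ blocks. Each block is the toric ideal of a $2\times 2$ independence table, with rows $\{\mathrm{Ox}, \mathrm{Hy}\}$ and columns $\{\mathrm{ald}, \mathrm{ket}\}$. Each such ideal is principal, generated by its single $2\times2$ minor. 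The remaining 22 coordinates are free variables contributing nothing. Therefore $I_A = \langle \text{the two binomials}\rangle$, and since neither generator lies in the ideal of the other, the count is exactly two.

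\textbf{Main obstacle.} The hard step is Step 4 combined with the first part of Step 3: certifying that the kernel has no other relations. I would extract from the 30 feasible triples, read off from the pair table, the subconfiguration of the 8 Oxime/Hydrazone triples. I would then show that the remaining 22 rows are linearly independent modulo these 8, because each uses a diagnostic handle column exclusively. This reduces the rank check to an explicit $8\times 8$ computation, which I would confirm independently with the SymPy rank and Gröbner computation.
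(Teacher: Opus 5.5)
Your proof is correct, and it takes a more structural route than the paper's.

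\textbf{How the paper argues.} The paper's proof is computational. It notes that $I_A$ has codimension $30-28=2$. It then enumerates all $\binom{30}{2}$ row pairs and finds exactly two coincidences $a_\alpha+a_\beta=a_\gamma+a_\delta$. Next it checks by machine that the $2\times 2$ minors of $(e_1\mid e_2)^T$ have gcd~$1$, so the two moves generate $L$ over $\ZZ$. Finally it cites a complete-intersection lemma from Sturmfels to pass from a lattice basis to a generating set of $I_A$.

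\textbf{How your route differs.} You replace the gcd computation by the observation that $u_1,u_2$ have disjoint supports with all entries $\pm1$, which makes that gcd trivially~$1$. You replace the citation by the explicit splitting of $I_A$ into two principal $2\times2$ independence ideals. That splitting is exactly why the lattice basis ideal is already saturated here. The paper leaves this point behind the citation, and it fails for general lattice bases.

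To make Step~4 airtight, write out three facts:
\begin{itemize}
\item for $a-b=c_1u_1+c_2u_2$, the monomials $p^a$ and $p^b$ factor over the two supports;
\item $x_1x_2-y_1y_2=x_1(x_2-y_2)+y_2(x_1-y_1)$;
\item $X^c-Y^c\in\langle X-Y\rangle$.
\end{itemize}
What the paper's enumeration buys is discovery: it finds the moves and rules out other quadratic moves without knowing them in advance. Your argument needs the moves supplied, but then needs no search.

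\textbf{One correction to your independent rank check.} It is not true that each of the 22 non-carbonyl rows has a private column. The norbornene triples of IEDDA and Thiol-ene share the norbornene column with each other, and the tetrazine or thiol column with further rows. The conclusion still holds after a second round of peeling. Once the 18 rows with private columns are forced to coefficient zero, the tetrazine and thiol columns become private to the remaining four.

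Also, the carbonyl rows and the other 22 rows have disjoint column supports. So ``independent modulo these 8'' simply means that the 22 rows are linearly independent.
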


\begin{proof}
The toric ideal $I_A$ lives in $\kk[p_1, \ldots, p_{30}]$ and
has codimension $N - \rank(A) = 30 - 28 = 2$.  It therefore
suffices to exhibit two independent elements that generate
the full toric lattice $L = \ker_{\ZZ}(A^T) \subset \ZZ^{30}$,
since $I_A$ is generated by the binomials corresponding to a
lattice basis of~$L$ when this basis forms a complete intersection
(see~\cite{Sturmfels96}, Lemma~4.1).

Enumerating all $\binom{30}{2}$ pairs of rows, exactly two
satisfy the row-sum condition
$a_\alpha + a_\beta = a_\gamma + a_\delta$;
these are the two stated binomials, with exponent vectors
$e_1, e_2 \in \ZZ^{30}$.  Since $\rank(L) = 2$ and
$\rank\{e_1, e_2\} = 2$, it remains to verify that $e_1, e_2$
generate~$L$ as a $\ZZ$-module (not merely a sublattice).  This
holds if and only if $\gcd$ of all $2 \times 2$ minors of the
matrix $(e_1 \mid e_2)^T$ equals~1, which we verified
computationally.  Hence $I_A = \langle b_1, b_2 \rangle$ is a
codimension-2 complete intersection.
\end{proof}

\paragraph{Interpretation.}
The sparseness of the toric ideal (2 generators for a 30-point
model) is a structural measure of the low redundancy in the
compatibility relation: each feasible triple is nearly uniquely
determined by its marginals.  Both binomials arise from the same
algebraic phenomenon---the exchange of two handle types between
two families sharing a common handle set.  In the chemical
instantiation, this corresponds to \emph{carbonyl equivalence}
in imine-type ligations (interchangeability of aldehyde and
ketone between the Oxime and Hydrazone families), a symmetry
known empirically but here derived from the ideal structure.
No other family pair produces a binomial relation.

\paragraph{On the running example.}
Neither binomial of the toric ideal involves the IEDDA or SPAAC
coordinates that the Kadcyla plan occupies, so the two steps of
the running example sit in a \emph{toric-rigid} region of the
model: their probabilities are determined, up to the global
normalisation, by their individual family and handle marginals,
with no hidden binomial exchange.  Combined with $\mathrm{ML\,degree} = 1$
(\cref{thm:mldeg}), this means that if one observed a dataset of
Kadcyla-type ADC designs in the wild, the MLE of the
family/handle propensities would be a unique closed-form rational
function of the empirical counts---no iterative algorithm or
multimodality to worry about.

\subsection{Mixture of independence models}

The fibre decomposition induces a mixture structure on
the statistical model:
\[
  P(f, h, h') \;=\; \pi_f \cdot P_f(h, h')
\]
where $\pi_f = w_f / \sum w_f$ and
$P_f(h, h') = w_h \cdot w_{h'} / Z_f$ is the conditional
distribution within fibre~$f$.  Within each fibre,
$h$ and $h'$ are conditionally independent given~$f$.
The marginal model (integrating over~$f$) is a
\emph{mixture of independence models} --- a well-studied
object in algebraic statistics~\cite{PS05}.

\begin{proposition}\label{prop:not-product}
For the 8-family, 17-handle instance, the variety $\Var(I)$ is not
a Cartesian product:
\[
  \Var(I) \;\subsetneq\; \pi_F(\Var(I)) \times \pi_H(\Var(I)) \times \pi_{H'}(\Var(I)),
\]
with a deficit of $|F| \cdot |H|^2 - |\Var(I)| = 2282$ triples.
\end{proposition}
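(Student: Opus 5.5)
We prove this by passing to feasible triples via \cref{thm:feasibility} and counting; the only real work is checking that the three coordinate projections are as large as the deficit formula requires.

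\textbf{Setup.} By \cref{thm:feasibility}, a point of $\Var(I)$ is the indicator vector of a single triple $(f,h,h')$ with $(h,h')\in\Pairs(f)$. So $\pi_F$, $\pi_H$ and $\pi_{H'}$ just read off the family, the source handle and the target handle. The inclusion
\[
\Var(I)\subseteq \pi_F(\Var(I))\times\pi_H(\Var(I))\times\pi_{H'}(\Var(I))
\]
is then tautological: each feasible triple has its coordinates in the respective projections. By \cref{prop:fibre} and \cref{ex:clickchem}, $|\Var(I)|=\sum_f|\Pairs(f)|=30$.

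\textbf{Identifying the projections.} We next determine the three projections for the 8-family, 17-handle instance.
\begin{itemize}
\item $\pi_F(\Var(I))$ is the set of active families, the variety of $E_F$ described in \cref{sec:elimination}. It equals all of $F$, since each of the eight families has at least one compatible pair.
\item $\pi_H(\Var(I))$ is the set of handles that occur as a first entry of some pair in some $\Pairs(f)$. Similarly, $\pi_{H'}(\Var(I))$ is the set of handles occurring as a second entry.
\end{itemize}
Suppose every one of the 17 handles occurs in both positions. This is plausible because the compatibility relation is recorded in both orientations, as the two mirrored Oxime/Hydrazone binomials in \cref{thm:toric} illustrate. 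Then the product has $8\cdot 17^2=2312$ points, which is exactly the selection-simplex layer $\kk^{|F|\cdot|H|^2}$ of \cref{prop:filtration}. The deficit is $2312-30=2282$, which is also the number of factors killed by $K_{\mathrm{compat}}$ in that proposition.

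\textbf{Strictness.} The strict inclusion follows from the count alone, since $30<2312$. For a concrete witness, take (SPAAC, cyclopropene, tetrazine). Each coordinate lies in its projection, because the Kadcyla plan of \cref{ex:kadcyla} uses cyclopropene and tetrazine. But the triple is infeasible, because cyclopropene is diagnostic for IEDDA (\cref{ex:diagnostic}).

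\textbf{Main obstacle.} The hard step is the claim that $\pi_H$ and $\pi_{H'}$ are all of $H$. This must be checked directly against the explicit 30-pair table of the instance. The method is to list, for each handle, one pair in which it appears as source and one in which it appears as target.

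If some handle turns out to occur only as a source or only as a target, the literal Cartesian product is smaller. In that case I would restate the deficit relative to the ambient selection layer $\{0,1\}$-points of $\Var(J_{\mathrm{bool}}+J_{\mathrm{sel}})$, which has $|F|\cdot|H|^2$ points, following \cref{prop:filtration}. Since the proposition's formula $|F|\cdot|H|^2-|\Var(I)|$ already refers to that layer, the number 2282 is what I would certify. The strictness of the inclusion holds in either reading.
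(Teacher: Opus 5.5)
Your route is the paper's. Its proof is a direct computational enumeration of $\Var(I)$ and of the three coordinate projections, and you make that logic explicit: the inclusion is tautological, and strictness already follows from your witness $(\mathrm{SPAAC},\text{cyclopropene},\text{tetrazine})$. That witness is sound, since $\Fam(\text{cyclopropene})=\{\mathrm{IEDDA}\}$. The number $2282$ is the deficit exactly when all three projections are full.

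\textbf{Gap.} That one substantive step is assumed, not proved. The mirrored Oxime/Hydrazone binomials say nothing about the other six families.

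\textbf{What the text already gives.} Because $\Fam(h)$ is defined through the source position (\cref{def:diagnostic}), the paper's classification covers all $17$ handles: $12$ with $|\Fam(h)|=1$ and $5$ with $|\Fam(h)|\ge 2$ (\cref{ex:diagnostic}). So every handle occurs as a source, giving $\pi_H(\Var(I))=H$. Also $\pi_F(\Var(I))=F$ by \cref{tab:fibres}.

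\textbf{What still needs the table.} For $\pi_{H'}(\Var(I))=H$ you need the pair table itself. One sufficient fact is that each $\Pairs(f)$ is closed under $(h,h')\mapsto(h',h)$. The data suggest this (six IEDDA pairs from three dienophile handles), but the text never states it. This is exactly what the paper's enumeration certifies.

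\textbf{Drop the fallback.} The proposition asserts that the product of the three projections has $|F|\cdot|H|^2$ points. If some handle occurred only as a source or only as a target, the deficit of that inclusion would be strictly less than $2282$, and the proposition would be false as stated. Re-reading ``deficit'' as the gap to the selection layer $\Var(J_{\mathrm{bool}}+J_{\mathrm{sel}})$ of \cref{prop:filtration} does not rescue it. It substitutes a different, trivially true count. Fullness of the projections is therefore not optional: together with $|\Var(I)|=30$, it is the entire content of the deficit assertion.
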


\begin{proof}
Direct enumeration of $\Var(I)$ and the three coordinate
projections, verified computationally.
\end{proof}

This non-product structure is the source of all
non-trivial algebraic statistics on~$\Var(I)$: it is
what makes the toric ideal non-trivial (two generators
rather than zero).

\subsection{ML degree of the log-linear model}\label{sec:mldeg}

The \emph{ML degree}---the number of complex
critical points of the likelihood function for generic data---is
a fundamental algebraic invariant of the model
(cf.~\cite{DSS09}, Chapter~7).

\begin{theorem}[ML Degree for the 8-family instance]\label{thm:mldeg}
$\mathrm{MLdeg}(M) = 1$.
\end{theorem}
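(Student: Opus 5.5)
Since the ML degree of a toric model is the number of complex critical points of the likelihood, the cleanest route is to exhibit a closed-form rational MLE and show it is the unique critical point. By Birch's theorem, the MLE of the log-linear model with design matrix $A$ is the unique point $\hat p \in M$ with $A^T \hat p = A^T u/|u|$ for data $u \in \ZZ_{\ge 0}^{30}$. Over $\CC$, the critical points are the points of the toric variety $\Var(I_A)$ (intersected with the torus and the hyperplane $\sum p_\alpha = 1$) that also satisfy these linear sufficient-statistic equations. So $\mathrm{MLdeg}(M)$ equals the number of solutions of the system
\[
  I_A + \bigl\langle A^T p - A^T u/|u| \bigr\rangle
\]
in $(\CC^*)^{30}$ for generic $u$. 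I would show this number is~1 by splitting the coordinates using \cref{thm:toric}.

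\textbf{Step 1 (free coordinates).} Any feasible triple not appearing in the two binomials of \cref{thm:toric} has no toric relation. I expect such a triple $\alpha$ to be determined linearly by the marginal equations. For instance, if it carries a diagnostic handle that occurs in exactly one feasible triple, then $\hat p_\alpha$ equals the corresponding empirical marginal. More generally, I would verify from the rank computation (\cref{prop:rank}) that the linear system $A^T p = b$ pins down all $22$ non-exchange coordinates uniquely and rationally. The kernel of $A^T$ is 2-dimensional, spanned by the lattice vectors $e_1,e_2$, which are supported only on the eight Oxime/Hydrazone carbonyl coordinates.

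\textbf{Step 2 (the exchange blocks).} What remains are two decoupled $2\times 2$ blocks. Each has four coordinates $p_{11},p_{12},p_{21},p_{22}$ indexed by (family, carbonyl), with row and column sums fixed by the marginals. Each block also carries one binomial $p_{11}p_{22}=p_{12}p_{21}$. This is exactly the $2\times 2$ independence model, whose ML degree is~1: the unique solution is $\hat p_{ij}=r_ic_j/N$. Concretely, parametrize the fibre as $p_{11}=a+t$, $p_{12}=b-t$, $p_{21}=c-t$, $p_{22}=d+t$. The binomial then becomes linear in $t$, because the $t^2$ terms cancel, so it has exactly one solution. Since the two blocks share no coordinates, the total count is $1\cdot 1=1$. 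This is consistent with the complete-intersection structure of \cref{thm:toric}.

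\textbf{Main obstacle.} The delicate step is Step 1: confirming that the kernel of $A^T$ restricted to the affine fibre really is supported only on the exchange coordinates. Otherwise some free coordinates would couple to $t$. I would argue this directly from \cref{thm:toric}: the lattice $L$ is generated by $e_1,e_2$, so the real kernel equals $L\otimes\RR$ and is supported on those eight coordinates. I would also check that for generic $u$ the unique solution lies in the torus and that the linear coefficient of $t$ is nonzero, so there are no degenerate roots. A computational cross-check, solving the saturated likelihood equations in SymPy for random data and counting solutions, would confirm the count.
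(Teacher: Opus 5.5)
Your argument is correct, but it proves the statement by a genuinely different route from the paper. The paper invokes strict convexity of the log-partition function $\psi(\theta)$ on the $28$-dimensional identifiable parameter space. From this it concludes that the likelihood has at most one critical point, hence exactly one for generic data, and it backs this with Newton-method runs.

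You instead count the complex solutions of the toric likelihood equations directly. Reducing these to $A^Tp=A^Tu/|u|$ uses homogeneity, which holds because every row of $A$ has exactly one $1$ in the family block. Because $\rank(A)=28$ and $e_1,e_2$ are independent vectors in $\ker A^T$, the fibre $\{A^Tp=A^Tu/|u|\}$ is $u/|u|+\CC e_1+\CC e_2$. This fact alone settles your ``main obstacle''; the lattice-generation part of \cref{thm:toric} is not needed for it. The $22$ non-carbonyl coordinates are therefore forced to equal $u_\alpha/|u|$. Each carbonyl block then contributes one binomial that is linear in its fibre parameter. Its coefficient is the normalised data total of that block, which is constant on the fibre and nonzero for positive data.

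The difference matters. The convexity argument only controls real critical points in $\theta$-space. It is Birch's theorem, which holds verbatim for every log-linear model, including models of ML degree larger than one: the twisted cubic with unit coefficients has a unique positive MLE but ML degree $3$. On its own it therefore cannot distinguish ML degree $1$ from higher. The Newton experiments likewise only see the real positive critical point.

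Your count is specific to this instance, since it uses that $\ker A^T$ is supported on two disjoint $2\times 2$ blocks. But it addresses the complex critical points that the ML degree actually counts. It also yields the explicit rational MLE: $\hat p_\alpha=u_\alpha/|u|$ off the blocks and $\hat p_{ij}=r_ic_j/N$ within each block. The paper's interpretation paragraph asserts this rational form but never derives it.
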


\begin{proof}
For the 8-family, 17-handle system ($n = 42$), the model $M$ is
a regular exponential family with natural parameter
$\theta \in \RR^{42}$ and sufficient statistic $T(x) = A^T x$,
where $A$ is the $30 \times 42$ design matrix.
By a standard result in exponential family theory
(see~\cite{BN78}, Theorem~9.1, or~\cite{DSS09}, \S7.1), the
log-partition function
$\psi(\theta) = \log \sum_{\alpha \in \Var(I)} \exp(a_\alpha^T \theta)$
is strictly convex on the identifiable parameter space
(the affine span of the columns of~$A^T$,
which has dimension $\rank(A) = 28$).
The negative log-likelihood $\ell(\theta) = \psi(\theta) - \theta^T A^T (u/n)$
is therefore strictly convex in this
28-dimensional space.  A strictly convex function on
a convex domain has at most one critical point.  For generic
data $u$ in the interior of the marginal polytope~$\conv(A)$,
the MLE exists (by compactness of the probability simplex and
openness of the exponential family) and is the unique critical
point.  Hence $\mathrm{MLdeg}(M) = 1$.

We verified this numerically by solving the MLE via Newton's
method from 50 random initialisations across 10 generic datasets
sampled from $\mathrm{Dir}(\mathbf{1}_{30})$: all 500~runs converged
to the same unique critical point (moment-matching error $< 10^{-10}$).
\end{proof}

\paragraph{Interpretation.}
$\mathrm{MLdeg}(M) = 1$ means the MLE is a rational function of
the data, computable in closed form.  The structural reason is the
fibre decomposition: conditioned on the family indicator, each
per-family model is a complete-independence model with
$\mathrm{MLdeg} = 1$, and the exponential-family structure
ensures the global MLE inherits this property.  A connection to the product formulae for staged mixture
models studied in~\cite{DSS09} (Chapter~7) warrants further
investigation.

\section{Multi-step Assembly and Orthogonality}\label{sec:multistep}

\subsection{The multi-step ideal}

A \emph{$k$-step assembly plan} is a $k$-tuple of feasible
triples satisfying pairwise family-distinctness and a
cross-reactivity exclusion condition.  We formalise
this as a single ideal in a $k$-fold product ring.

\begin{definition}[Multi-step Assembly Ring]\label{def:multistep}
For $k \geq 2$, the \emph{$k$-step assembly ring} is
\[
  R^{(k)} = \kk\bigl[
    F^{(1)}, H^{(1)}, H'^{(1)}, \;\ldots,\;
    F^{(k)}, H^{(k)}, H'^{(k)}
  \bigr]
\]
with $kn$ variables, equipped with the \emph{multi-step ideal}
\[
  I^{(k)} = \sum_{j=1}^k I^{(j)}
    + J_{\mathrm{orth}} + J_{\mathrm{cross}}
\]
where:
\begin{enumerate}[nosep,label=(\alph*)]
\item $I^{(j)}$ is a copy of the single-step ideal in the
  $j$-th block of variables.
\item $J_{\mathrm{orth}} = \langle f_i^{(s)} \cdot f_i^{(t)} :
  s \neq t,\; \forall\, i \rangle$ (family orthogonality).
\item $J_{\mathrm{cross}} = \langle h_a^{(s)} \cdot h'^{(t)}_b :
  s \neq t,\; (h_a, h_b) \in \bigcup_f \Pairs(f) \rangle$
  (cross-reactivity).
\end{enumerate}
\end{definition}

For $k \geq 2$, the cross-reactivity constraints
$J_{\mathrm{cross}}$ couple the fibre ideals of different
steps, destroying the product structure of the single-step
variety.

\begin{proposition}[Fibre Decomposition Failure]\label{prop:fibre-break}
For $k \geq 2$, the variety $\Var(I^{(k)})$ does not
decompose as a Cartesian product of single-step varieties:
\[
  \Var(I^{(k)}) \;\subsetneq\;
  \Var(I)^k \;=\;
  \underbrace{\Var(I) \times \cdots \times \Var(I)}_{k}.
\]
\end{proposition}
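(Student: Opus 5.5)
We prove the two halves of the statement separately: first the inclusion $\Var(I^{(k)}) \subseteq \Var(I)^k$, then its strictness by exhibiting an explicit point of $\Var(I)^k$ that violates the coupling generators.

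\emph{Inclusion.} By \cref{def:multistep}, $I^{(k)} \supseteq \sum_{j} I^{(j)}$. The ideal $\sum_j I^{(j)}$ is a sum of copies of $I$ in disjoint blocks of variables, so its variety is exactly $\Var(I)^k$. Hence $\Var(I^{(k)}) \subseteq \Var\bigl(\sum_j I^{(j)}\bigr) = \Var(I)^k$. By \cref{thm:feasibility}, every point of $\Var(I)^k$ is a $k$-tuple of indicator vectors of feasible triples $(f^{(j)}, h^{(j)}, h'^{(j)})$, so membership in $\Var(I^{(k)})$ just means that the generators of $J_{\mathrm{orth}}$ and $J_{\mathrm{cross}}$ vanish at that tuple.

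\emph{Strictness.} We exhibit a $k$-tuple in $\Var(I)^k$ on which some generator of $J_{\mathrm{orth}}$ or $J_{\mathrm{cross}}$ does not vanish. The simplest choice is to repeat one feasible triple $p = (f_i, h_a, h'_b)$ in every block, which requires only that $\Var(I) \neq \varnothing$. At this point $f_i^{(1)} f_i^{(2)} = 1 \neq 0$, so it fails $J_{\mathrm{orth}}$. It also fails $J_{\mathrm{cross}}$: since $(h_a, h_b) \in \Pairs(f_i)$, the monomial $h_a^{(1)} h'^{(2)}_b$ is a generator of $J_{\mathrm{cross}}$, and it evaluates to $1$. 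Either violation shows $(p, \ldots, p) \notin \Var(I^{(k)})$, so the inclusion is strict. For the 8-family instance $\Var(I) \neq \varnothing$ by \cref{ex:clickchem}. In general we should add the standing hypothesis that some $\Pairs(f)$ is nonempty, since otherwise both sides are empty and the inclusion is not strict.

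\emph{A less trivial witness.} The repeated-triple witness uses only $J_{\mathrm{orth}}$ and does not really show that $J_{\mathrm{cross}}$ is what breaks the fibre structure. To make that point, we also give a witness with distinct families that is killed only by $J_{\mathrm{cross}}$. Take $k = 2$ with $p_1 = (\mathrm{SPAAC}, \text{azide}, \text{DBCO})$ and $p_2 = (\mathrm{CuAAC}, \text{azide}, \text{alkyne})$. The families differ, so $J_{\mathrm{orth}}$ vanishes. But $h_{\text{azide}}^{(2)} h'^{(1)}_{\text{DBCO}} = 1$, and $(\text{azide}, \text{DBCO}) \in \Pairs(\mathrm{SPAAC})$, so this is a nonzero $J_{\mathrm{cross}}$ generator. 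For $k > 2$ we pad the remaining blocks with triples from other families. If padding is awkward, we pad with repeats instead, since one violated generator already suffices.

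The only real subtlety is this degenerate-emptiness issue. The cross-reactivity witness depends on the specific $\Pairs$ data; that is acceptable because \cref{ex:diagnostic} lists azide in SPAAC and CuAAC.
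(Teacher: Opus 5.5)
Your proof is correct, but it takes a more general route than the paper. The paper argues only for the 8-family instance and only for $k=2$. It exhibits $(\mathrm{SPAAC},\text{azide},\text{strained alkyne})$ and $(\mathrm{CuAAC},\text{azide},\text{alkyne})$, a point of $\Var(I)^2$ that satisfies $J_{\mathrm{orth}}$ but is killed by the cross generator $h^{(1)}_{\mathrm{azide}}h'^{(2)}_{\mathrm{alkyne}}$. It leaves the inclusion $\Var(I^{(k)})\subseteq\Var(I)^k$ and the extension to $k>2$ implicit; you make both explicit.

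Your repeated-triple witness $(p,\ldots,p)$ is instance-free. It covers all $k\ge 2$ and all family--handle structures at once. It also isolates the one hypothesis the generally phrased statement needs, namely $\Var(I)\neq\varnothing$. What it does not give is the paper's point that cross-reactivity is what couples the steps, since $(p,\ldots,p)$ is a degenerate non-plan that already fails family orthogonality. Your second witness supplies that. It is the paper's witness with strained alkyne replaced by DBCO and the cross generator read in the opposite direction.

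Both of your witnesses run on one mechanism, which yields a clean general statement. If feasible triples $(f_1,h_a,h'_{b_1})$ and $(f_2,h_a,h'_{b_2})$ with $f_1\neq f_2$ share a source handle, then $h_a^{(2)}h'^{(1)}_{b_1}\in J_{\mathrm{cross}}$ evaluates to $1$. So a distinct-family witness violating only $J_{\mathrm{cross}}$ exists in any structure with a non-diagnostic handle (\cref{def:diagnostic}).

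A minor citation point: $(\text{azide},\text{DBCO})\in\Pairs(\mathrm{SPAAC})$ is stated in \cref{ex:kadcyla}. \Cref{ex:diagnostic} only records which families use azide, not which pairs they admit.
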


\begin{proof}
It suffices to exhibit a pair $(t_1, t_2) \in \Var(I)^2$ that
violates $J_{\mathrm{cross}}$ and hence does not lie in
$\Var(I^{(2)})$.  Take
$t_1 = (\mathrm{SPAAC},\, \mathrm{azide},\, \mathrm{strained\_alkyne})$
and
$t_2 = (\mathrm{CuAAC},\, \mathrm{azide},\, \mathrm{alkyne})$.
Both are feasible triples (different families), so
$(t_1, t_2) \in \Var(I)^2$.  However, the source handle of step~1
is azide and the target handle of step~2 is alkyne; since
$(\mathrm{azide}, \mathrm{alkyne}) \in \Pairs(\mathrm{CuAAC})$,
the cross-reactivity generator
$h_{\mathrm{azide}}^{(1)} \cdot h'^{(2)}_{\mathrm{alkyne}}$
does not vanish at $(t_1, t_2)$, so
$(t_1, t_2) \notin \Var(I^{(2)})$.
\end{proof}

\subsection{The orthogonality graph}

\begin{definition}\label{def:orth-graph}
The \emph{orthogonality graph} $G_\perp$ has vertex set
$\Var(I)$ and an edge between triples $t_1$ and $t_2$ if
and only if $t_1$ and $t_2$ are mutually compatible in a
2-step plan: different families, disjoint handles, and no
cross-reactivity.
\end{definition}

The maximum multiplicity of a $k$-step plan equals the
\emph{clique number} $\omega(G_\perp)$.

\begin{theorem}[Maximum Orthogonal Multiplicity]\label{thm:ortho}
For the standard 8-family, 17-handle bioorthogonal system:
\[
  \omega(G_\perp) = 4.
\]
A maximum clique is (one of six):
\begin{center}
\begin{tabular}{lll}
\toprule
Step & Family & Handles $(h \to h')$ \\
\midrule
1 & SPAAC & DBCO $\to$ azide \\
2 & IEDDA & TCO $\to$ tetrazine \\
3 & Thiol-Maleimide & maleimide $\to$ thiol \\
4 & Oxime & aminooxy $\to$ aldehyde \\
\bottomrule
\end{tabular}
\end{center}
In particular, at most 4 of 8 families can be used simultaneously:
$\omega(G_\perp) < |F|$.  Six distinct maximum independent family
sets exist.  The family-level handle-sharing graph
(vertices~$= F$, edge iff shared handle) has 6~edges from four
bottleneck corridors and chromatic number~3
(\Cref{fig:orthogonality}).
\end{theorem}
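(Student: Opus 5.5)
The plan is to prove the lower bound $\omega(G_\perp)\ge 4$ by checking the displayed clique, and the upper bound $\omega(G_\perp)\le 4$ by reducing to a graph on families.

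\textbf{Lower bound.} For each of the $\binom{4}{2}=6$ pairs of steps in the displayed table, I check the three conditions of \cref{def:orth-graph}. First, the families are distinct. Second, the handle sets $\{\mathrm{DBCO},\mathrm{azide}\}$, $\{\mathrm{TCO},\mathrm{tetrazine}\}$, $\{\mathrm{maleimide},\mathrm{thiol}\}$ and $\{\mathrm{aminooxy},\mathrm{aldehyde}\}$ are pairwise disjoint. Third, there is no cross-reactivity: no source handle of one step, paired with a target handle of another, lies in $\bigcup_f\Pairs(f)$. Equivalently, the generators of $J_{\mathrm{cross}}$ vanish on the $4$-tuple, so it is a point of $\Var(I^{(4)})$. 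This is a finite lookup in the $30$ pairs of \cref{ex:clickchem}.

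\textbf{Upper bound.} I pass to the family level. Every clique uses pairwise distinct families, because of $J_{\mathrm{orth}}$. I then define the handle-sharing graph $G_F$ on $F$, with an edge $f\sim g$ whenever some handle occurs in both $\Pairs(f)$ and $\Pairs(g)$. By \cref{ex:diagnostic}, the shared handles are exactly the five non-diagnostic ones. They give the edges
\begin{itemize}[nosep]
\item SPAAC--CuAAC, SPAAC--Staudinger and CuAAC--Staudinger, from azide;
\item Oxime--Hydrazone, from aldehyde and ketone;
\item IEDDA--Thiol-ene, from norbornene;
\item Thiol-Maleimide--Thiol-ene, from thiol.
\end{itemize}
That is $6$ edges, grouped into four corridors: an azide triangle, the carbonyl edge, and the path IEDDA--Thiol-ene--Thiol-Maleimide.

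I claim that two triples from adjacent families can never be adjacent in $G_\perp$. Whichever triples are chosen, one of two things happens. Either they share the common handle, which violates disjointness. Or they use distinct handles that are each compatible with the same partner class, so a pair in $\bigcup_f\Pairs(f)$ appears across steps and $J_{\mathrm{cross}}$ is violated. The SPAAC/CuAAC example of \cref{prop:fibre-break} is the prototype. Granting the claim, the families of any clique form an independent set in $G_F$. The components of $G_F$ are the triangle, the single edge and the path on three vertices. Their independence numbers are $1$, $1$ and $2$, so $\alpha(G_F)=4$ and hence $\omega(G_\perp)\le4$.

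\textbf{Counting and remaining statements.} Maximum independent sets of $G_F$ are obtained by choosing one of $3$ azide families, one of $2$ carbonyl families, and from the path necessarily $\{\mathrm{IEDDA},\mathrm{Thiol}\text{-}\mathrm{Maleimide}\}$. This gives $3\cdot2=6$ family sets, matching ``six distinct maximum independent family sets''. For each of the six I would exhibit a realising clique, obtained by swapping the azide and carbonyl steps of the table, to confirm that each is attained. The chromatic number of $G_F$ is $3$, because the azide triangle forces $3$ colours and the other components are bipartite.

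\textbf{Main obstacle.} The delicate step is the claim that adjacency in $G_F$ forbids adjacency in $G_\perp$. A family may have several pairs, and two adjacent families could in principle be realised by triples that avoid the shared handle altogether; for instance, Thiol-ene realised through norbornene rather than thiol, next to Thiol-Maleimide. If the claim fails for some such pair, I would instead take the upper bound from an exhaustive clique search over the $30$ vertices of $G_\perp$, which the paper verifies computationally. That search would also confirm there is no $5$-clique, and I would keep $G_F$ only as the explanatory and counting device.
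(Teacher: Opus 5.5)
Your family-level analysis is correct: $G_F$ has exactly the six edges you list, and its components are the azide triangle, the Oxime--Hydrazone edge and the path IEDDA--Thiol-ene--Thiol-Maleimide. So $\alpha(G_F)=4$, there are $3\cdot 2=6$ maximum independent sets, $\chi(G_F)=3$, and the displayed clique checks out.

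The gap is your bridging claim that triples from $G_F$-adjacent families are never adjacent in $G_\perp$. You read adjacency as disjointness of the two triples' own handles, with cross-reactivity tested through $J_{\mathrm{cross}}$, i.e.\ source of one step against target of the other. Under that reading the claim holds for the azide triangle and for Thiol-Maleimide--Thiol-ene, since every triple of those families contains azide, respectively thiol. Your worry about Thiol-ene ``through norbornene'' is unfounded, because all four Thiol-ene pairs contain thiol. The claim fails for the other two corridors:
\begin{itemize}
\item $(\mathrm{IEDDA},\text{TCO},\text{tetrazine})$ and $(\text{Thiol-ene},\text{thiol},\text{alkene})$ have disjoint handles, and no two of their four handles form a pair of any family. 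They are adjacent even under a symmetric cross-reactivity test, because an IEDDA triple need not touch norbornene.
\item $(\mathrm{Oxime},\text{aminooxy},\text{aldehyde})$ and $(\mathrm{Hydrazone},\text{ketone},\text{hydrazine})$ have disjoint handles and pass $J_{\mathrm{cross}}$. The reactive combinations aminooxy/ketone and aldehyde/hydrazine are source--source and target--target, which $J_{\mathrm{cross}}$ never tests.
\end{itemize}
Now add $(\mathrm{SPAAC},\text{DBCO},\text{azide})$ and $(\text{Thiol-Maleimide},\text{maleimide},\text{thiol})$ to the last two triples. This gives five pairwise adjacent triples, i.e.\ a point of $\Var(I^{(5)})$. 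So under your reading $\omega(G_\perp)\ge 5$. Your fallback, an exhaustive clique search (which is the paper's entire proof), would then refute the statement rather than confirm it.

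The bound $\omega=4$, together with the counts $N_k$ the paper reports, holds for a different adjacency rule. Under it, $t_1\sim t_2$ iff two conditions hold:
\begin{itemize}
\item their families are handle-disjoint \emph{as families}, $H_{f_1}\cap H_{f_2}=\emptyset$ (the notion used in the paragraph before \cref{prop:omega5});
\item no handle of $t_1$ forms a pair of $\bigcup_f\Pairs(f)$ with a handle of $t_2$, in either role.
\end{itemize}
You need to state this definition explicitly. With it your bridging claim is immediate, and your $G_F$ argument becomes a complete structural proof. It is more informative than the paper's brute-force enumeration of subsets of $\Var(I)$.

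Under this rule, the only triple-level interaction left between non-adjacent families is thiol--norbornene, between Thiol-Maleimide and IEDDA. It removes IEDDA's two norbornene triples, so each of the six maximum independent sets still lifts to a clique. The generating function $(1+8x)^2(1+14x+16x^2)$ then reproduces $N_k=30,304,1152,1024,0$ exactly.
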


\begin{proof}
The clique number is computed by exhaustive enumeration of all
$\binom{30}{k}$ subsets of $\Var(I)$ for $k = 2, \ldots, 8$,
checking pairwise compatibility.  Let $N_k$ denote the number
of \emph{unordered} $k$-step orthogonal protocols (i.e.\ sets of
$k$ pairwise-compatible triples from distinct families); each
such set corresponds to $k!$ ordered tuples in~$\Var(I^{(k)})$,
so $|\Var(I^{(k)})| = k!\, N_k$.  We find:
\[
  N_k \;=\; 30,\; 304,\; 1152,\; 1024,\; 0,\; 0,\; 0,\; 0
  \qquad \text{for } k = 1, \ldots, 8.
\]
The maximum clique size is $k = 4$.  The vanishing of
$N_5$ means no 5-step orthogonal plan exists.
\end{proof}

\paragraph{Interpretation.}
The gap $\omega(G_\perp) = 4 < |F| = 8$ is a graph-theoretic
obstruction caused entirely by handle sharing: the five
non-diagnostic handles ($|\Fam(h)| > 1$) induce four corridors
in~$G_\times$ that force mutual exclusion among families.  The six
maximum independent sets (\Cref{tab:invariants}) enumerate all
optimal selections; each contains one family from the azide triangle,
one from the carbonyl pair, and the two conflict-free families.
In the chemical setting, this means at most four distinct reaction
chemistries can be deployed in a multiplexed protocol.

\paragraph{On the running example.}
The Kadcyla plan of \cref{ex:kadcyla} uses two of these
four families---IEDDA and SPAAC---which together form a
2-clique in $G_\perp$ and sit inside each of the six maximum
independent sets (\cref{tab:invariants}).  This is what makes the
dual-click design robust: both IEDDA and SPAAC are
conflict-free, so the protocol inherits all the cross-reactivity
guarantees that the MIS analysis provides, and the remaining
two slots in the maximum clique can be populated with a
\emph{carbonyl} step (Oxime or Hydrazone) and an \emph{azide-route}
step (distinct from the SPAAC step already in use) to assemble
a hypothetical 4-step orthogonal extension of the Kadcyla
protocol.  The obstruction to a \emph{fifth} such step is the
content of the next subsection and of \cref{sec:omega5}.

\subsection{The four bottleneck corridors}

The obstruction to higher clique numbers can be precisely
localised in the cross-reactivity graph.

\begin{definition}\label{def:cross-graph}
The \emph{cross-reactivity graph} $G_\times$ has vertex set
$\Var(I)$ and an edge between $t_1 = (f_1, h_1, h'_1)$ and
$t_2 = (f_2, h_2, h'_2)$ (with $f_1 \neq f_2$) whenever
$(h_1, h'_2) \in \bigcup_f \Pairs(f)$ or
$(h_2, h'_1) \in \bigcup_f \Pairs(f)$.
\end{definition}

\begin{proposition}[Bottleneck Corridors]\label{prop:bottleneck}
For the 8-family instance,
the cross-reactivity graph $G_\times$ has edges distributed in
four bottleneck corridors:
\begin{enumerate}[nosep,label=(\roman*)]
\item The \emph{azide corridor} (three-way): SPAAC $\leftrightarrow$ CuAAC
  (6~pairs), SPAAC $\leftrightarrow$ Staudinger (6~pairs), and
  CuAAC $\leftrightarrow$ Staudinger (4~pairs), all caused by the
  shared handle \emph{azide} ($|\Fam(\text{azide})| = 3$).
\item The \emph{carbonyl corridor}: 8 cross-reactive pairs between
  Oxime and Hydrazone triples, caused by the shared handles
  \emph{aldehyde} and \emph{ketone}
  ($|\Fam(\text{ald})| = |\Fam(\text{ket})| = 2$).
\item The \emph{thiol corridor}: Thiol-Maleimide $\leftrightarrow$
  Thiol-ene, caused by the shared handle \emph{thiol}.
\item The \emph{norbornene corridor}: IEDDA $\leftrightarrow$
  Thiol-ene, caused by the shared handle \emph{norbornene}.
\end{enumerate}
The azide corridor is the densest ($|\Fam(\text{azide})| = 3$);
the carbonyl corridor involves two shared handles
($|\Fam(\text{ald})| = |\Fam(\text{ket})| = 2$).
\end{proposition}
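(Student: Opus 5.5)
The plan is to prove \cref{prop:bottleneck} by a structural reduction followed by a finite check. The reduction says that a cross-reactive pair of triples in distinct families can only arise through a handle shared between those families.

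\textbf{Step 1: reduction.} Take an edge of $G_\times$ between $t_1=(f_1,h_1,h'_1)$ and $t_2=(f_2,h_2,h'_2)$ with $f_1\neq f_2$. By \cref{def:cross-graph}, some family $g$ satisfies $(h_1,h'_2)\in\Pairs(g)$ or $(h_2,h'_1)\in\Pairs(g)$. In the first case $h_1$ is a source handle for both $f_1$ and $g$, so $\{f_1,g\}\subseteq\Fam(h_1)$; the second case is symmetric. If every handle involved were diagnostic, we would get $g=f_1$ and $g=f_2$, so $f_1=f_2$, a contradiction.

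This argument needs $\Pairs$ to be effectively symmetric. The paper lists both orientations, for example $(\mathrm{aox},\mathrm{ald})$ and $(\mathrm{ald},\mathrm{aox})$. So I would first check, from the instance data, that $\Fam$ computed on target handles gives the same sets as on source handles. With that in place, every edge of $G_\times$ is attributed to a handle $h$ with $|\Fam(h)|>1$.

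\textbf{Step 2: corridor assignment.} By \cref{ex:diagnostic} (which rests on \cref{thm:diagnostic}), the only such handles are azide, aldehyde, ketone, norbornene and thiol. Their family sets give the corridor pairs:
\begin{itemize}
\item $\{\mathrm{SPAAC},\mathrm{CuAAC},\mathrm{Staudinger}\}$ from azide,
\item $\{\mathrm{Oxime},\mathrm{Hydrazone}\}$ from aldehyde and ketone,
\item $\{\mathrm{IEDDA},\mathrm{Thiol\text{-}ene}\}$ from norbornene,
\item $\{\mathrm{Thiol\text{-}Maleimide},\mathrm{Thiol\text{-}ene}\}$ from thiol.
\end{itemize}
These are the six family pairs of the handle-sharing graph in \cref{thm:ortho}, grouped into four corridors. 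Hence every edge of $G_\times$ lies in one of corridors (i)--(iv).

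\textbf{Step 3: edge counts.} For each corridor I would list the triples of the two families containing the shared handle, on either side, and test the two conditions of \cref{def:cross-graph} pair by pair.
\begin{itemize}
\item For SPAAC--CuAAC, I count pairs $(t_1,t_2)$ in which azide in one triple completes a compatible pair with the partner handle of the other triple. The counts depend on how many azide-containing triples each family has, in both orientations: SPAAC has azide paired with DBCO and strained alkyne, CuAAC with alkyne. This should yield $6,6,4$.
\item The carbonyl count of $8$ should follow from the four Oxime and four Hydrazone triples that contain aldehyde or ketone.
\item For the thiol and norbornene corridors I only need nonemptiness, and this is witnessed by an explicit pair each.
\end{itemize}
The same enumeration must also confirm that no edge arises between families outside the listed pairs. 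Step 1 guarantees this, but I would cross-check it against the computed edge set.

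\textbf{Main obstacle.} I expect the exact counts in Step 3 to be the hardest part. They depend on the precise 30-element $\Pairs$ table, which appears in the paper only implicitly. They also depend on whether the edge count records unordered pairs of triples or each satisfied direction separately; for example, whether $4$ for CuAAC--Staudinger reflects double counting. I would settle the convention by reproducing the carbonyl count of $8$ from the toric-ideal triples of \cref{thm:toric}, then apply that convention to the azide corridor. Finally I would verify all counts by the same enumeration used in \cref{thm:ortho}.
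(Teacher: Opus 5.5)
Your proposal has a genuine gap in the passage from Step~1 to Step~2, and Step~3 cannot produce the stated azide counts.

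\textbf{The reduction does not bound the family pairs.} Step~1 is correct as far as it goes: it shows that some handle involved in an edge is non-diagnostic. Step~2 then uses more than that. If $(h_1,h'_2)\in\Pairs(g)$, then $h_1$ is shared by $f_1$ and $g$, and $h'_2$ is shared by $g$ and $f_2$. This yields a handle shared by $f_1$ and $f_2$ themselves only when $g\in\{f_1,f_2\}$. When the reactive pair comes from a third family $g\notin\{f_1,f_2\}$, nothing places $\{f_1,f_2\}$ among the six handle-sharing pairs. So your sentence ``Step~1 guarantees this'' is false.

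In this instance the mediated case actually occurs. Take $t_1=(\text{Thiol-Maleimide},\text{maleimide},\text{thiol})$ and $t_2=(\mathrm{IEDDA},\text{norbornene},\text{tetrazine})$. Then $(h_2,h'_1)=(\text{norbornene},\text{thiol})\in\Pairs(\text{Thiol-ene})$; this is the pair through which Thiol-ene enters $\Fam(\text{norbornene})$. Hence $t_1t_2$ is an edge of $G_\times$ that joins two handle-disjoint families and lies in none of the corridors (i)--(iv). This is exactly the third-family mediated cross-reactivity the paper itself describes in \cref{prop:omega5}\,(c) and \cref{rmk:third-family}. Your argument therefore only supports a weaker claim: every edge is caused by a reactive pair containing a non-diagnostic handle. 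Which family pairs carry edges must come from enumeration, and that enumeration will turn up IEDDA--Thiol-Maleimide edges. The paper's proof is nothing more than ``direct enumeration over all cross-family pairs'', so it gives you no structural route around this.

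\textbf{Step~3 will not give $6,6,4$.} The triples are fixed by the paper's data: every family lists both orientations; SPAAC pairs azide with DBCO and with strained alkyne; CuAAC pairs azide with alkyne; Staudinger pairs azide with phosphine. Under the definition of $G_\times$, two azide-corridor triples from different families are adjacent exactly when azide occupies the same position in both. This gives $4,4,2$ edges for SPAAC--CuAAC, SPAAC--Staudinger and CuAAC--Staudinger, while Oxime--Hydrazone gives $8$.

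Your plan to calibrate the convention on the carbonyl count therefore yields $4,4,2$. Counting ordered pairs or satisfied directions gives $8,8,4,16$ instead. No natural convention for counting pairs of triples returns $6,6,4,8$. Those figures equal $|\Pairs(f)|+|\Pairs(f')|$ for the two families, i.e.\ the number of reactive handle pairs linking the two handle sets, not the number of edges of $G_\times$.

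A correct version of your plan would prove the handle-attribution statement structurally. It would then report the enumerated family pairs and edge counts as they actually come out, rather than asserting that they ``should'' match the stated ones.
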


\begin{proof}
Direct enumeration over all $\binom{30}{2}$ cross-family pairs.
\end{proof}

\begin{corollary}\label{cor:cuaac-excluded}
In the 8-family instance, every maximum orthogonal plan must
choose one family from each bottleneck corridor: either SPAAC
or CuAAC (but not both),
and either Oxime or Hydrazone (but not both).  IEDDA and
Thiol-Maleimide participate in all maximum independent sets
because they share no handles with any family that itself
appears in a maximum independent set (their shared handles
with Thiol-ene are immaterial, since Thiol-ene is excluded
from every MIS by its conflicts in the thiol and norbornene
corridors).
\end{corollary}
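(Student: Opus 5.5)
The plan is to work at the level of families rather than triples. I will reduce the clique computation of \cref{thm:ortho} to an independent-set computation in the family-level handle-sharing graph, and then read the corollary off the structure of that graph as given by \cref{prop:bottleneck}.

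\emph{Step 1: triples to families.} Two triples from families that share a handle $h$ cannot be adjacent in $G_\perp$. If both use $h$ on the same side, they are not handle-disjoint. If they use it on opposite sides, the pair becomes cross-reactive through a pair in $\bigcup_f \Pairs(f)$, which is a $J_{\mathrm{cross}}$ generator. I only need this one direction: a maximum clique projects injectively (by family-distinctness) onto an independent set of the family graph. The clique of size $4$ exhibited in \cref{thm:ortho} shows this independent set has size $4$.

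\emph{Step 2: the family graph and its independent sets.} By \cref{prop:bottleneck}, the edges of the family graph are exactly those arising from the four corridors:
\begin{itemize}
\item the azide triangle $\{\mathrm{SPAAC},\mathrm{CuAAC},\mathrm{Staudinger}\}$;
\item the carbonyl edge $\mathrm{Oxime}$--$\mathrm{Hydrazone}$;
\item the thiol edge $\mathrm{Thiol\text{-}Maleimide}$--$\mathrm{Thiol\text{-}ene}$;
\item the norbornene edge $\mathrm{IEDDA}$--$\mathrm{Thiol\text{-}ene}$.
\end{itemize}
This gives the six edges stated in \cref{thm:ortho}. The graph splits into three components: the triangle, the carbonyl edge, and the path $\mathrm{Thiol\text{-}Maleimide}$--$\mathrm{Thiol\text{-}ene}$--$\mathrm{IEDDA}$. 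Their independence numbers are $1$, $1$ and $2$, so the family graph has independence number $4$.

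\emph{Step 3: shape of a maximum independent set.} A maximum independent set must attain the maximum in every component. In the triangle it therefore picks exactly one azide-route family. In the carbonyl edge it picks exactly one of Oxime or Hydrazone. In the path $P_3$ the unique independent set of size $2$ is its two endpoints, so it picks $\{\mathrm{IEDDA},\mathrm{Thiol\text{-}Maleimide}\}$ and excludes Thiol-ene. This gives $3\cdot 2=6$ maximum independent sets, matching \cref{thm:ortho}.

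\emph{Step 4: the SPAAC-or-CuAAC clause.} The corollary states ``either SPAAC or CuAAC'', so I need to rule out Staudinger in a maximum \emph{clique}. Staudinger is allowed at the family level, so the corollary's ``not both'' reading follows from the triangle in any case. For the stronger reading I would check at the triple level, using the enumeration of \cref{thm:ortho} ($N_4=1024$), whether any $4$-clique contains a Staudinger triple. I expect this to be the main obstacle, because it is exactly where the family-level reduction of Step 1 is not an equivalence. If Staudinger does occur in some $4$-clique, the corollary should be read as ``at most one azide-route family'', consistent with the six maximum independent sets.

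\emph{Step 5: the remaining claims.} The parenthetical about Thiol-ene is exactly the $P_3$ argument in Step 3. The exclusivity between the two carbonyl families, and among the azide-route families, is immediate because they are adjacent in the family graph.
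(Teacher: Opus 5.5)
Your Steps~2, 3 and~5 are the justification the paper itself relies on. It gives no separate proof; the corollary is read off the family-level handle-sharing graph of \cref{thm:ortho}, which consists of the azide triangle, the Oxime--Hydrazone edge, and the path Thiol-Maleimide--Thiol-ene--IEDDA.

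Your suspicion in Step~4 is correct and needs no further enumeration. \cref{tab:available} itself lists two maximum independent sets containing Staudinger. Moreover, any Staudinger triple together with $(\text{IEDDA},\text{TCO}\to\text{tetrazine})$, $(\text{Thiol-Maleimide},\text{maleimide}\to\text{thiol})$ and $(\text{Oxime},\text{aminooxy}\to\text{aldehyde})$ is a $4$-clique, because azide and phosphine react with nothing carried by the other three triples. So the clause ``either SPAAC or CuAAC'' is false as written. What is true, and what your Step~3 proves, is ``exactly one of SPAAC, CuAAC, Staudinger''.

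\emph{The gap is Step~1.} You claim that two triples from families sharing a handle $h$ cannot be adjacent. This tacitly assumes that both triples contain $h$, and they need not. For example, $(\text{IEDDA},\text{TCO},\text{tetrazine})$ and $(\text{Thiol-ene},\text{thiol},\text{alkene})$ come from families sharing norbornene. Yet their handle sets are disjoint, and no pair of their handles lies in $\bigcup_f\Pairs(f)$.

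Your side analysis is also reversed. Sharing $h$ on the \emph{same} side always fires a $J_{\mathrm{cross}}$ generator, because the other triple's own pair lies in $\bigcup_f\Pairs(f)$. Sharing on opposite sides need not: take $(\text{Oxime},\text{aldehyde},\text{aminooxy})$ versus $(\text{Hydrazone},\text{hydrazine},\text{aldehyde})$.

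Consider the consequences under a triple-level reading of \cref{def:orth-graph}. The set $\{(\text{SPAAC},\text{DBCO},\text{azide}),(\text{IEDDA},\text{TCO},\text{tetrazine}),(\text{Thiol-ene},\text{thiol},\text{alkene}),(\text{Oxime},\text{aminooxy},\text{aldehyde})\}$ is a $4$-clique that omits Thiol-Maleimide. It is a maximum clique if cross-reactivity is tested on unordered handle pairs. With the directed test of \cref{def:cross-graph}, it even extends by $(\text{Hydrazone},\text{ketone},\text{hydrazine})$ to a $5$-clique.

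So Step~1 cannot be derived from triple-level conditions at all. It is in effect the convention the paper uses (see \cref{sec:omega5}: every $k$-clique projects to $k$ pairwise handle-disjoint families). Under that convention, adjacency in $G_\perp$ requires the \emph{families} to be handle-disjoint, $H_{f_1}\cap H_{f_2}=\emptyset$. This convention reproduces the paper's counts exactly:
$(1+8x)^2(1+14x+16x^2)=1+30x+304x^2+1152x^3+1024x^4$.
Here the two factors $1+8x$ come from the azide triangle and the carbonyl edge. In the path factor, $16=4\cdot 4$ because the thiol carried by every Thiol-Maleimide triple excludes IEDDA's two norbornene triples via Thiol-ene. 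In particular, $2\cdot 8\cdot 16=256$ of the $1024$ four-step plans use Staudinger.

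Take family-level handle-disjointness as the definition of adjacency instead of your side-by-side argument. The projection of a maximum clique onto a maximum independent family set is then immediate, and the rest of your proof goes through.
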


\subsection{The orthogonality sequence}

For the 8-family instance,
the sequence $(N_k)_{k=1}^{8}$ of unordered protocol counts is:
\[
  30, \;\; 304, \;\; 1152, \;\; 1024, \;\; 0, \;\; 0, \;\; 0, \;\; 0.
\]
This is \emph{non-monotone}: it increases from $k = 1$
to $k = 3$, then decreases sharply.  The peak at $k = 3$
reflects the abundance of 3-step orthogonal designs
($N_3 = 1152$ unordered protocols, distributed over 23 family
subsets), while the steep drop to $N_4 = 1024$ (six family
subsets) and the vanishing at $k = 5$ is the ``orthogonality
cliff'' imposed by the four bottleneck corridors.

\paragraph{Interpretation.}
The orthogonality sequence $(N_k)$ is a complete invariant of the
multiplexing capacity at each level~$k$.  The peak at $k = 3$
($N_3 = 1152$) and the steep drop to $k = 4$ ($N_4 = 1024$)
quantify the combinatorial cost of approaching the clique number;
the vanishing at $k = 5$ is a hard graph-theoretic boundary
imposed by $\omega(G_\perp) = 4$.  In the chemical setting, this
means three-step protocols offer the richest design freedom,
while five-step and higher protocols are infeasible.

\subsection{Emerging families and the $\omega = 5$ barrier}\label{sec:omega5}

The bound $\omega(G_\perp) = 4$ is tight for the current
$|F| = 8$ system.  We now characterise precisely when adjoining
a new family raises the clique number.

\paragraph{Necessary conditions for $\omega = 5$.}
A ninth family $f_9$ raises $\omega(G_\perp)$ to~5 if and only
if $f_9$ is not adjacent to all four families in some maximum
independent set.  Equivalently, the handles of~$f_9$ must be
disjoint from the handles of at least one MIS.  Each of the
six current MIS leaves between 4 and 5 handles unused
(\Cref{tab:available}), so a new family using only
those ``available'' handles would extend the MIS to size~5.

\begin{table}[h!]
\centering
\caption{Available handles per maximum independent set.  A new family
  using handles exclusively from this pool extends the MIS to size~5.}
\label{tab:available}
\resizebox{\textwidth}{!}{%
\begin{tabular}{@{}clc@{}}
\toprule
MIS & Available handles & Count \\
\midrule
1 (SPAAC, IEDDA, Thiol-Mal, Oxime) & alkene, alkyne, hydrazine, phosphine & 4 \\
2 (SPAAC, IEDDA, Thiol-Mal, Hydraz) & alkene, alkyne, aminooxy, phosphine & 4 \\
3 (CuAAC, IEDDA, Thiol-Mal, Oxime) & DBCO, alkene, hydrazine, phosphine, str\_alkyne & 5 \\
4 (CuAAC, IEDDA, Thiol-Mal, Hydraz) & DBCO, alkene, aminooxy, phosphine, str\_alkyne & 5 \\
5 (IEDDA, Thiol-Mal, Oxime, Staud) & DBCO, alkene, alkyne, hydrazine, str\_alkyne & 5 \\
6 (IEDDA, Thiol-Mal, Hydraz, Staud) & DBCO, alkene, alkyne, aminooxy, str\_alkyne & 5 \\
\bottomrule
\end{tabular}}
\end{table}

\paragraph{A graph-theoretic characterisation.}

Two families $f_i, f_j$ are \emph{handle-disjoint} if
$H_{f_i} \cap H_{f_j} = \emptyset$.  A set $S$ of families is a
\emph{maximum independent family set} (MIS) if every pair in~$S$
is handle-disjoint and $|S|$ is maximal.  Every MIS of size~$k$
lifts to at least one $k$-clique in~$G_\perp$, provided the
cross-reactivity constraints $J_{\mathrm{cross}}$ do not block all
triple assignments; conversely, every $k$-clique in~$G_\perp$
projects to~$k$ pairwise handle-disjoint families.

\begin{proposition}[Criterion for $\omega = 5$]\label{prop:omega5}
Let $f_9$ be a new family with handle set $H_9 \subseteq H$.
\begin{enumerate}[nosep,label=(\alph*)]
\item\label{item:omega5-nec}
  \textup{(Necessary condition.)}
  If $\omega(G_\perp \cup \{f_9\}) = 5$, then there exists a
  maximum independent set $S$ of $G_\perp$ with $|S| = 4$ and
  $H_9 \cap \bigl(\bigcup_{f \in S} H_f\bigr) = \emptyset$.
\item\label{item:omega5-novel}
  \textup{(Sufficient condition: novel handles.)}
  If $H_9 \cap H = \emptyset$ (i.e.\ $f_9$ introduces handles not
  present in any existing family), then
  $\omega(G_\perp \cup \{f_9\}) = 5$.
\item\label{item:omega5-general}
  \textup{(General sufficient condition.)}
  Suppose there exists a maximum independent set $S$ with
  $H_9 \cap \bigl(\bigcup_{f \in S} H_f\bigr) = \emptyset$, and
  moreover, for every handle $a \in H_9$ and every handle
  $b \in \bigcup_{f \in S} H_f$, neither $(a, b)$ nor $(b, a)$
  is contained in $\Pairs(g)$ for any family~$g$.  Then
  $\omega(G_\perp \cup \{f_9\}) = 5$.
\end{enumerate}
\end{proposition}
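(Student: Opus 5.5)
The plan is to work throughout at the level of the family-level handle-sharing graph and to pass to $G_\perp$ using the two lifting facts stated just before the proposition. First, every $k$-clique in $G_\perp$ projects to $k$ pairwise handle-disjoint families. Second, a set of pairwise handle-disjoint families lifts to a clique once no cross-reactivity generator of $J_{\mathrm{cross}}$ blocks the choice of triples. Write $H_S = \bigcup_{f\in S} H_f$. I take as given from \cref{thm:ortho} and \cref{cor:cuaac-excluded} that $\omega(G_\perp)=4$ and that every 4-clique projects to one of the six MIS listed in \cref{tab:available}. I also implicitly assume that $\Pairs(f_9)\neq\emptyset$ and that $f_9$ does not cross-react with itself in a way that kills its own triples.

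For (a), take a 5-clique in the extended orthogonality graph. Since the old graph has clique number~4, the clique must contain a vertex from the new fibre $\Var(I'_{f_9})$. This uses \cref{thm:extension}: the old fibres are unchanged, so every vertex outside the new fibre is an old vertex. By family-distinctness ($J_{\mathrm{orth}}$), the clique contains exactly one triple of family $f_9$. The remaining four triples form a 4-clique of $G_\perp$, so their families form a set $S$ of four pairwise handle-disjoint families, i.e.\ an MIS of size~4. Adjacency in $G_\perp$ requires disjoint handles, so the $f_9$-triple uses no handle of $H_S$. The small gap is that this gives disjointness only for the handles of the chosen $f_9$-triple, not for all of $H_9$. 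I would either read $H_9$ as the handle set of the realising triple, or note that one may restrict $f_9$ to that triple's pair.

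For (c), start from an MIS $S$ with $H_9\cap H_S=\emptyset$ and fix a 4-clique $t_1,\dots,t_4$ of $G_\perp$ lying over $S$; such a clique exists, for instance the one exhibited in \cref{thm:ortho} and its analogues for the other five MIS. Pick any $t_5\in\Var(I'_{f_9})$. Then check the three edge conditions between $t_5$ and each $t_j$. The families differ because $f_9$ is new. The handles are disjoint because $H_9\cap H_S=\emptyset$. There is no cross-reactivity, since a generator $h_a^{(s)}h'^{(t)}_b$ of $J_{\mathrm{cross}}$ that is nonzero at the pair requires $(a,b)$ or $(b,a)$ to lie in some $\Pairs(g)$ with one handle in $H_9$ and the other in $H_S$, and the hypothesis excludes this. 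Here I must allow $g$ to range over $F\cup\{f_9\}$, since the pairs of $f_9$ itself now enter $J_{\mathrm{cross}}$; I would state this reading explicitly. Hence $\{t_1,\dots,t_5\}$ is a 5-clique. For the upper bound, any 6-clique would contain at most one $f_9$-triple and hence a 5-clique of $G_\perp$, which is impossible. So $\omega=5$.

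Part (b) should follow as a corollary of (c). If $H_9\cap H=\emptyset$, then $H_9$ is disjoint from $H_S$ for every $S$. Moreover, no pair $(a,b)$ with $a\in H_9$ and $b\in H$ lies in $\Pairs(g)$ for an old $g$, since old pairs use only handles in $H$. The new family's pairs lie in $H_9\times H_9$, so they also mix with nothing in $H_S$. The main obstacle is formal rather than mathematical: the statement literally requires $H_9\subseteq H$, which contradicts $H_9\cap H=\emptyset$ unless $H_9$ is empty. I would resolve this by enlarging the handle universe to $H\cup H_9$ and checking that \cref{thm:extension} and the fibre decomposition still hold, since the old fibres acquire no new points. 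I would also record the nondegeneracy assumption $\Pairs(f_9)\neq\emptyset$ needed in all three parts.
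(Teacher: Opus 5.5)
\paragraph{Verdict.} Parts (b) and (c) are correct. The gap you flag in (a) is genuine, and it cannot be closed at the triple level where you work.

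\paragraph{The gap in (a).} Read on triples, (a) is false when $H_9$ is the full handle set of $f_9$. Take $\Pairs(f_9)=\{(x,y),(z,\mathrm{TCO})\}$ with $x,y,z$ fresh handles, as the paper itself allows in (b).
\begin{itemize}
\item The triple $(f_9,x,y)$ shares no handle with the 4-clique of \cref{thm:ortho}.
\item Neither $(x,b)$ nor $(b,y)$ lies in any $\Pairs(g)$ for $b$ a handle of that clique.
\item The two new pairs delete none of the clique's edges, since no triple in it has source $x$ or $z$.
\end{itemize}
So the extended clique number is~5. Yet $\mathrm{TCO}\in H_9$ is a handle of IEDDA, which lies in all six MIS, so the conclusion of (a) fails. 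Both of your repairs therefore prove a weaker, true statement (disjointness only for the handles of the realising triple), not the one given.

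The paper obtains (a) only by changing level. It reads $\omega(G_\perp\cup\{f_9\})=5$ as an independent set $S'=S\cup\{f_9\}$ of five families in the family-level handle-sharing graph. There, non-adjacency of $f_9$ and $f$ means $H_9\cap H_f=\emptyset$ by definition, so disjointness from all of $H_9$ is immediate. To prove (a) as stated you must adopt that reading. Be aware that it is a different invariant from the triple-level clique number you use in (b) and (c). The two agree on the 8-family instance, but in the example above the family-level value stays at~4.

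\paragraph{Parts (b) and (c).} These are correct, and in one respect sharper than the paper. The paper asserts that \emph{any} choice of one feasible triple per family of $S\cup\{f_9\}$ is a valid plan. This already fails inside $S$ because of third-family mediated cross-reactivity: summing $\prod_{f\in S}|\Pairs(f)|$ over the six MIS gives $1536$ assignments, against only $N_4=1024$ actual 4-cliques. Your move avoids this. You first fix a genuine 4-clique over $S$, whose existence is the computational fact that all six MIS carry 4-cliques. Only then do you append an arbitrary $f_9$-triple. This matches $N_5=2\cdot 1024$ after adjoining SuFEx.

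You also supply the upper bound $\omega\le 5$, which the paper leaves implicit. Deriving (b) as a special case of (c) reverses the paper's order but is equivalent. Your remark that (b) contradicts $H_9\subseteq H$ unless the handle universe is enlarged is correct; the paper does this silently.

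Two small points to add:
\begin{itemize}
\item Letting $g$ range over $f_9$ costs nothing, since $\Pairs(f_9)\subseteq H_9\times H_9$ cannot pair a handle of $H_9$ with one of $H_S$.
\item For the same reason, the pairs of $f_9$ entering $J_{\mathrm{cross}}$ delete no edge among $t_1,\dots,t_4$. Your argument needs this but does not state it.
\end{itemize}
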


\begin{proof}
\cref{item:omega5-nec}\;
Suppose $\omega(G_\perp \cup \{f_9\}) = 5$.
Then there exists an independent set $S' \subset F \cup \{f_9\}$
with $|S'| = 5$.  Since $\omega(G_\perp) = 4$, the vertex
$f_9$ must belong to~$S'$, so $S' = S \cup \{f_9\}$ where
$S \subset F$ with $|S| = 4$.  Because $S'$ is independent,
$f_9$ is not adjacent to any $f \in S$, i.e.\
$H_9 \cap H_f = \emptyset$ for all $f \in S$.  Hence
$H_9 \cap \bigl(\bigcup_{f \in S} H_f\bigr) = \emptyset$.
Moreover, $S$ is independent in~$G_\perp$ and has size~4,
so $S$ is a maximum independent set of~$G_\perp$.

\cref{item:omega5-novel}\;
Since $H_9 \cap H = \emptyset$, we have $H_9 \cap H_f = \emptyset$
for \emph{every} family~$f$, so condition~\cref{item:omega5-nec}
holds for every MIS~$S$.  To show that a valid 5-step plan
exists, note that because $H_9$ introduces entirely new handle
types, no pair $(a, b)$ with $a \in H_9$ and $b \in H_f$ (or
vice versa) can belong to $\Pairs(g)$ for any existing family~$g$:
such a pair would require $a \in H_g$, contradicting
$H_9 \cap H = \emptyset$.  Therefore no cross-reactivity
arises between $f_9$ and any family in~$S$, and any choice
of one feasible triple per family in $S \cup \{f_9\}$ yields a
valid 5-step plan, giving $\omega(G_\perp \cup \{f_9\}) = 5$.

\cref{item:omega5-general}\;
The argument is identical to~\cref{item:omega5-novel}, with the
explicit no-cross-reactivity hypothesis replacing the stronger
$H_9 \cap H = \emptyset$.  Handle-disjointness from~$S$ ensures
no direct handle sharing; the additional condition that neither
$(a, b)$ nor $(b, a)$ lies in $\Pairs(g)$ for $a \in H_9$,
$b \in \bigcup_{f \in S} H_f$ rules out \emph{third-family
mediated} cross-reactivity in both directions of the
cross-reactivity check (\Cref{def:cross-graph}).
For example, the pair $(\text{thiol}, \text{alkene}) \in
\Pairs(\text{Thiol-ene})$ could otherwise block a triple of~$f_9$
using alkene from coexisting with a Thiol-Maleimide triple using
thiol, even though $f_9$ and Thiol-Maleimide share no handles
directly.  With both conditions satisfied, any assignment of one
feasible triple per family in $S \cup \{f_9\}$ is a valid 5-step
plan in~$\Var(I^{(5)})$.
\end{proof}

\begin{remark}\label{rmk:third-family}
The distinction between parts~\cref{item:omega5-novel}
and~\cref{item:omega5-general} is not merely formal.  In the
current 8-family system, every ``available'' handle for MIS~1
(SPAAC, IEDDA, Thiol-Mal, Oxime)---namely alkene, alkyne,
hydrazine, and phosphine---participates in at least one reactive
pair with a handle of a family in that MIS, mediated by a third
family (Thiol-ene, CuAAC, Hydrazone, Staudinger respectively).
A hypothetical ninth family reusing these handles would satisfy
handle-disjointness from~$S$ yet fail the cross-reactivity
check of~\cref{item:omega5-general}.  All six emerging families
surveyed in \Cref{tab:emerging} are verified computationally.
\end{remark}

\paragraph{Interpretation.}
\Cref{prop:omega5} reduces the question of raising
$\omega$ to a handle-disjointness condition together with the
absence of third-family mediated cross-reactivity.  In particular,
any family with an entirely novel handle set (disjoint from all
17 current types) raises $\omega$ unconditionally
(\cref{item:omega5-novel}).  Conversely, a new family sharing a
handle with a family present in every MIS (in the bioorthogonal
instance: IEDDA or Thiol-Maleimide) cannot raise~$\omega$.

\paragraph{On the running example.}
For the Kadcyla dual-click plan, both IEDDA and SPAAC sit in every
MIS, so \cref{prop:omega5} has an immediate design reading: a
ninth family $f_9$ can extend the protocol to a fifth orthogonal
step only if its handle set is disjoint from
$\{\text{cyclopropene}, \text{tetrazine}, \text{azide},
\text{DBCO}\}$ \emph{and} from the handles of whichever carbonyl
and Thiol-Maleimide choices complete a 4-MIS.  Any family that
shares norbornene (IEDDA's other handle) or thiol (Thiol-Mal's
other handle) is excluded immediately.  A family with entirely
novel handles---SuFEx is the cleanest existing candidate---clears
the criterion.  The algebraic statement, in other words, tells
the chemist exactly where to look when designing the next
generation of Kadcyla-like multi-functional ADCs.

\paragraph{Analysis of emerging reactions.}
We evaluate six candidate emerging reactions against the
MIS availability constraint:

\begin{enumerate}[nosep]
\item \textbf{SuFEx} (sulfur(VI) fluoride exchange):
  uses sulfonyl fluoride and silyl ether, both entirely novel.
  Adds an isolated vertex to~$G_\perp$; joins \emph{all six} MIS.
  $\Rightarrow$ \emph{$\omega$ rises to~5 unconditionally.}

\item \textbf{Photoclick} (tetrazole photolysis $\to$ nitrile
  imine + alkene): shares alkene with Thiol-ene.  Adjacent only
  to Thiol-ene; can join all four MIS containing SPAAC or CuAAC
  (which exclude Thiol-ene by other corridors).
  $\Rightarrow$ \emph{$\omega$ rises to~5.}

\item \textbf{Nitrone--alkene}: shares alkene with Thiol-ene.
  Same adjacency as photoclick.
  $\Rightarrow$ \emph{$\omega$ rises to~5.}

\item \textbf{Sydnone--alkyne}: shares alkyne with CuAAC.
  Adjacent only to CuAAC; can join the four MIS containing
  SPAAC or Staudinger.
  $\Rightarrow$ \emph{$\omega$ rises to~5.}

\item \textbf{Quadricyclane--Ni azide}: entirely novel handles.
  $\Rightarrow$ \emph{$\omega$ rises to~5 unconditionally.}

\item \textbf{Isonitrile--tetrazine} ($[4{+}1]$ cycloadd.):
  shares tetrazine with IEDDA; since IEDDA lies in every
  MIS, the new family is adjacent to all MIS members.
  $\Rightarrow$ \emph{Cannot raise~$\omega$.} The unique
  emerging reaction blocked by graph structure.
\end{enumerate}

\paragraph{Computational verification.}
We verify \Cref{prop:omega5} by explicitly computing
$\omega(G_\perp \cup \{f_9\})$ and a witness MIS for each of the
six emerging reaction candidates (\Cref{tab:emerging}).

\begin{table}[h!]
\centering
\caption{Computational verification of the $\omega = 5$ analysis.
  For each candidate family~$f_9$, we adjoin it to the 8-family system,
  recompute the orthogonality graph, and report $\omega$, the adjacency
  of~$f_9$, and an example MIS of maximum size.}
\label{tab:emerging}
\resizebox{\textwidth}{!}{%
\begin{tabular}{@{}lllcl@{}}
\toprule
Family $f_9$ & Novel handles & Adj.\ to & $\omega'$ & Example MIS \\
\midrule
SuFEx & sulfonyl fl., silyl eth. & (none) & \textbf{5} &
  SPAAC, IEDDA, Thiol-Mal, Oxime, SuFEx \\
Photoclick & diaryl tetrazole & Thiol-ene & \textbf{5} &
  SPAAC, IEDDA, Thiol-Mal, Oxime, Photoclick \\
Nitrone--alkene & nitrone & Thiol-ene & \textbf{5} &
  SPAAC, IEDDA, Thiol-Mal, Oxime, Nitrone-alk. \\
Sydnone--alkyne & sydnone & CuAAC & \textbf{5} &
  SPAAC, IEDDA, Thiol-Mal, Oxime, Sydnone-alk. \\
Quadricyclane & quadricycl., azo dicarb. & (none) & \textbf{5} &
  SPAAC, IEDDA, Thiol-Mal, Oxime, Quadricycl. \\
Isonitrile--Tz & isonitrile & IEDDA & 4 &
  SPAAC, Thiol-Mal, Oxime, Isonitrile-Tz \\
\bottomrule
\end{tabular}}
\end{table}

\noindent
Five of six candidates raise $\omega$ to~5.  The unique exception
is isonitrile--tetrazine, which shares tetrazine with IEDDA; since
IEDDA appears in all six current MIS, the new family cannot extend
any of them.  With SuFEx adjoined, the extended orthogonality
sequence $(N_k)$ becomes $32, 364, 1760, 3328, 2048, 0, \ldots$\,,
exhibiting a new peak at $k = 4$ ($N_4 = 3328$) and
non-vanishing at $k = 5$ ($N_5 = 2048$ five-step protocols),
consistent with the new clique number $\omega = 5$; the sequence
now vanishes at $k = 6$ rather than $k = 5$.

\section{Computational Results}\label{sec:computation}

All computations were performed in SymPy~\cite{SymPy}
over~$\QQ$.  A Python module encodes the combinatorial input
data $(F, H, \Pairs)$ and constructs the polynomial ring and
ideal generators automatically.

\begin{table}[ht]
\centering
\caption{Numerical invariants of the assembly variety.}
\label{tab:invariants}
\begin{tabular}{@{}lr@{}}
\toprule
Invariant & Value \\
\midrule
Families $|F|$ & 8 \\
Handle types $|H|$ & 17 \\
Variables $n = |F| + 2|H|$ & 42 \\
Ideal generators $|I|$ & 2327 \\
Compatibility generators $|K_{\mathrm{compat}}|$ & 2282 \\
Feasible triples $|\Var(I)|$ & 30 \\
$\rank$ of design matrix $A$ & 28 \\
Log-linear model dimension & 27 \\
Diagnostic handles & 12/17 \\
Non-diagnostic (shared) handles & 5 (azide, aldehyde, ketone, norbornene, thiol) \\
Toric ideal generators & 2 \\
Handle-sharing graph edges (family level) & 6 \\
Maximum orthogonal multiplicity $\omega(G_\perp)$ & 4 \\
Maximum independent sets & 6 \\
Chromatic number (family level) & 3 \\
Bottleneck corridors & 4 \\
Selectivity ratio $\sigma = |\Var(I)|/(|F| \cdot |H|^2)$ & 0.0130 \\
\bottomrule
\end{tabular}
\end{table}

\begin{table}[ht]
\centering
\caption{Per-family fibre sizes and selectivity.}
\label{tab:fibres}
\begin{tabular}{@{}lcc@{}}
\toprule
Family $f$ & $|\Pairs(f)|$ & $\sigma_f = |\Pairs(f)| / |H|^2$ \\
\midrule
SPAAC & 4 & 0.0138 \\
CuAAC & 2 & 0.0069 \\
IEDDA & 6 & 0.0208 \\
Thiol-Maleimide & 4 & 0.0138 \\
Oxime & 4 & 0.0138 \\
Hydrazone & 4 & 0.0138 \\
Staudinger & 2 & 0.0069 \\
Thiol-ene & 4 & 0.0138 \\
\midrule
Total & 30 & 0.0130 \\
\bottomrule
\end{tabular}
\end{table}

\subsection{Visualising the variety and its invariants}

We present seven figures that make the algebraic structures
introduced in \cref{sec:ring,sec:multistep} visually
concrete.  All figures are generated programmatically from the
algebraic data using the companion Python module, ensuring
reproducibility and consistency with the numerical results in
Tables~\cref{tab:invariants}--\cref{tab:fibres}.

\paragraph{The interaction graph.}
The Hammersley--Clifford theorem guarantees that any positive
distribution on $\Var(I)$ factorises over the maximal cliques of
the conditional-independence graph.  \Cref{fig:interaction}
displays this graph in its natural tripartite layout
$G = (F \cup H \cup H', E)$.  Each edge connects a family node
to the handles it can use; the 30~maximal cliques correspond
one-to-one with the 30~feasible triples.  The tripartite
structure makes the handle-sharing patterns visually
immediate: azide appears in three family columns
(SPAAC, CuAAC, Staudinger), while cyclopropene, vinyl sulfone,
and the remaining 10~diagnostic handles attach to a single family each.

\begin{figure}[h!]
\centering
\includegraphics[width=0.88\textwidth]{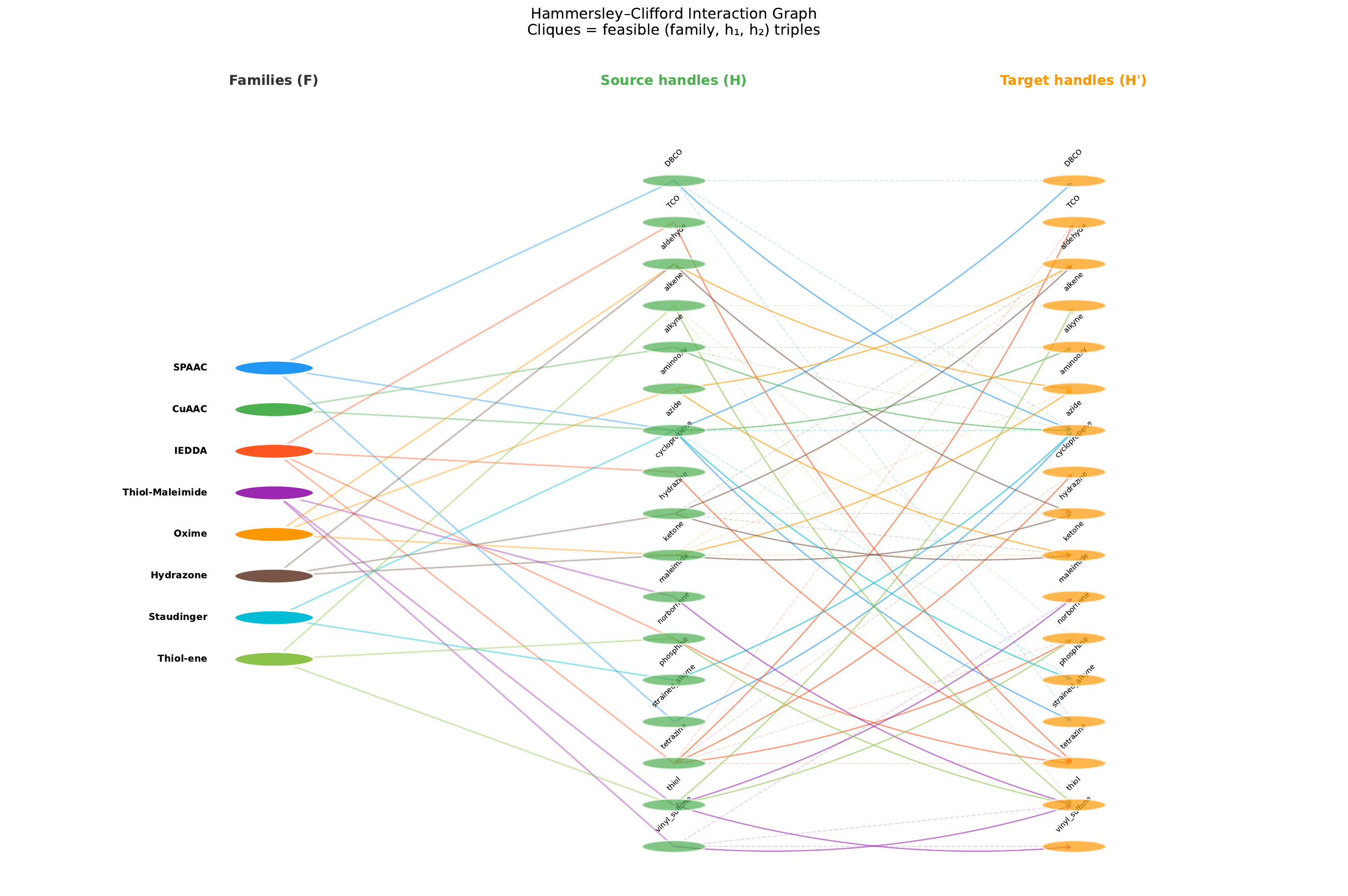}
\caption{The Hammersley--Clifford interaction graph.
  Tripartite layout $G = (F \cup H \cup H', E)$;
  cliques~$=$~feasible triples in~$\Var(I)$.}
\label{fig:interaction}
\end{figure}

\paragraph{Growth and parametric extension.}
\Cref{fig:growth} tracks two complementary aspects of the
variety's size.  The left panel shows the \emph{growth curve}:
$|\Var(I)|$ as families are adjoined one by one, starting from a
single family and building up to the full 8-family system.
Each step adds exactly $|\Pairs(f)|$ new points, confirming
the disjoint-fibre decomposition of \Cref{prop:fibre}.
The right panel tests the \emph{functorial extension theorem}
(\Cref{thm:extension}) by adjoining a hypothetical ninth family with
$p = 1, 2, \ldots, 12$ handle pairs drawn from the existing
handle universe.  In every case, the computed count
$|\Var(I')| = |\Var(I)| + p$ matches the predicted linear growth
exactly.  An important caveat applies: this linear growth measures
single-step feasibility.  It does not extend to the multi-step
orthogonality sequence, because a new family that shares handles
with existing ones introduces edges in~$G_\perp$ without
necessarily raising $\omega(G_\perp)$
(cf.\ \Cref{fig:orthogonality}).

\begin{figure}[h!]
\centering
\includegraphics[width=0.92\textwidth]{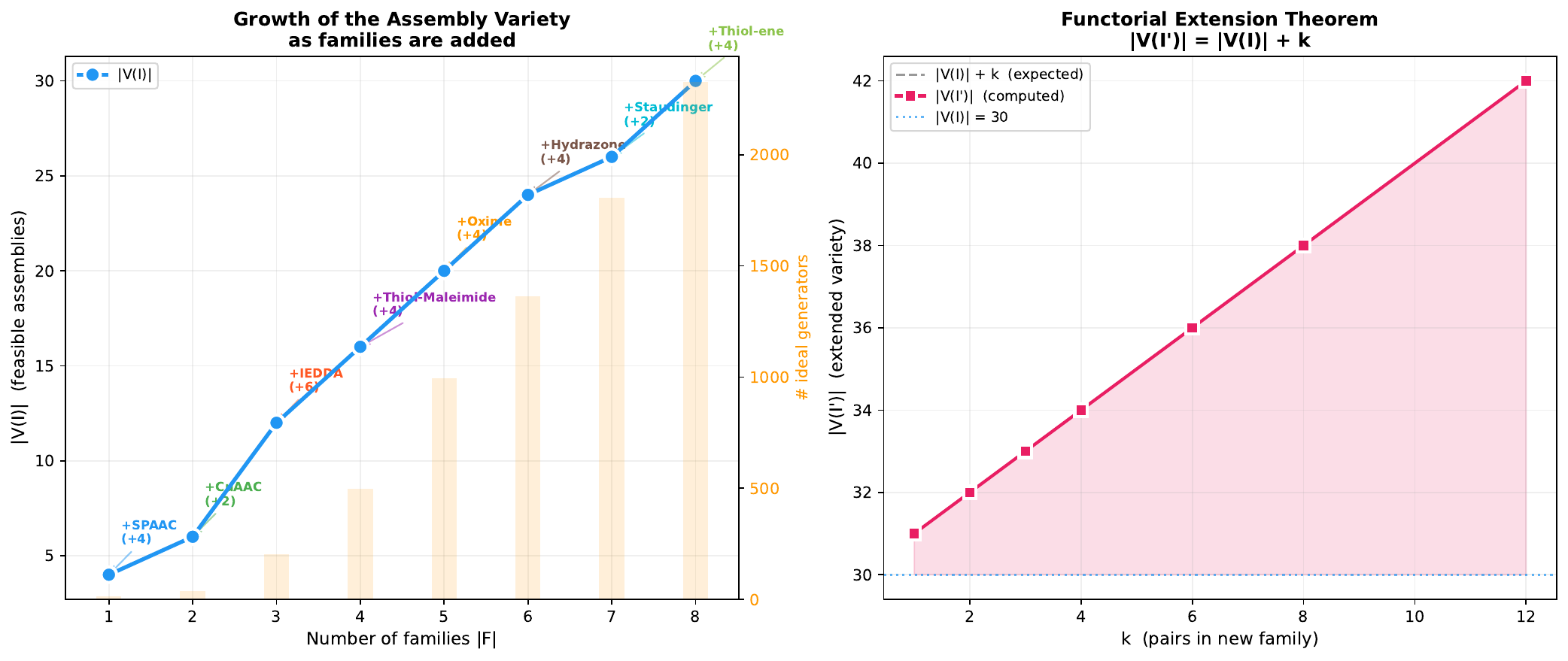}
\caption{Left: growth curve of~$|\Var(I)|$ as families are added.
  Right: parametric extension confirming $|\Var(I')| = |\Var(I)| + p$.
  This linear growth applies to the single-step variety only;
  multi-step orthogonality is governed by $G_\perp$.}
\label{fig:growth}
\end{figure}

\paragraph{Selectivity landscape.}
The selectivity ratio $\sigma = |\Var(I)|/(|F| \cdot |H|^2)$
measures the fraction of $F \times H \times H$ that lies
in~$\Var(I)$.  At $\sigma = 0.013$, fewer than $1.3\%$
of all possible (family, source, target) triples pass the
compatibility constraints.  \Cref{fig:selectivity} decomposes
this global ratio into per-family selectivities~$\sigma_f$ and
displays the sandwich inequality
$|F| \leq |\Var(I)| \leq |F| \cdot |H|^2$.
IEDDA has the highest per-family selectivity ($\sigma_f = 0.021$,
6~pairs) thanks to three dienophile handles
(TCO, norbornene, cyclopropene), while CuAAC and Staudinger have
the lowest ($\sigma_f = 0.007$, 2~pairs each).  The ratio
provides a single scalar measure of how ``constrained'' the
chemistry is, and can serve as an objective function for network
design: a lower~$\sigma$ indicates a more selective, less
promiscuous toolkit.

\begin{figure}[h!]
\centering
\includegraphics[width=0.88\textwidth]{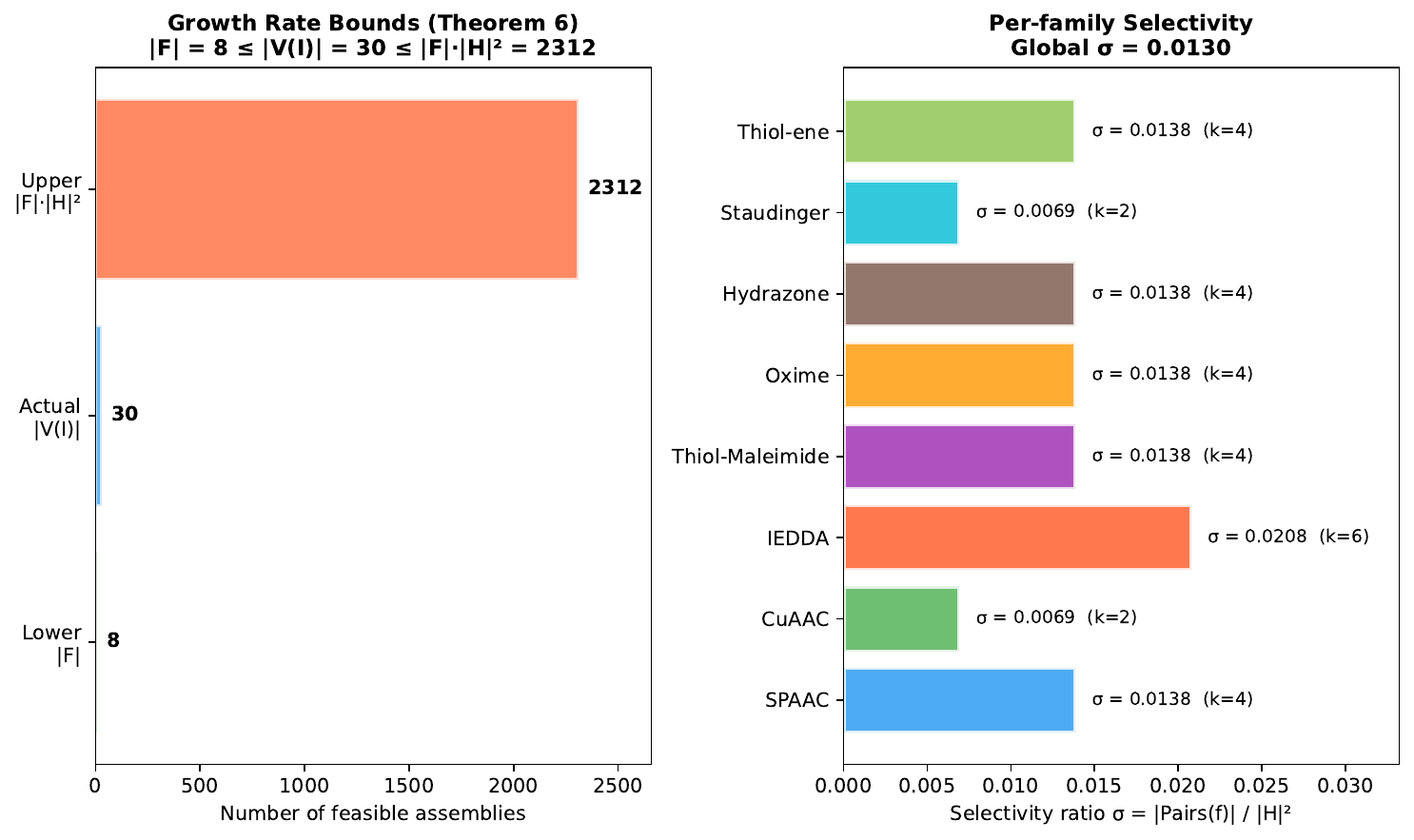}
\caption{Selectivity landscape.  Left: sandwich bounds on~$|\Var(I)|$.
  Right: per-family selectivities~$\sigma_f$.}
\label{fig:selectivity}
\end{figure}

\paragraph{The variety as a point cloud.}
\Cref{fig:points} plots the 30~lattice points of
$\Var(I) \cap \{0,1\}^{42}$, coloured by family membership.
The fibre decomposition $\Var(I) = \bigsqcup_{f} \Var(I_f)$
is manifest as eight non-overlapping clusters: each family's
points form a connected component in the graph on~$\Var(I)$
induced by Hamming adjacency.  The IEDDA cluster (6~points)
is the largest, reflecting its three dienophile handles;
the CuAAC and Staudinger clusters (2~points each) are the
smallest, consistent with their minimal handle pairs.

\begin{figure}[h!]
\centering
\includegraphics[width=0.78\textwidth]{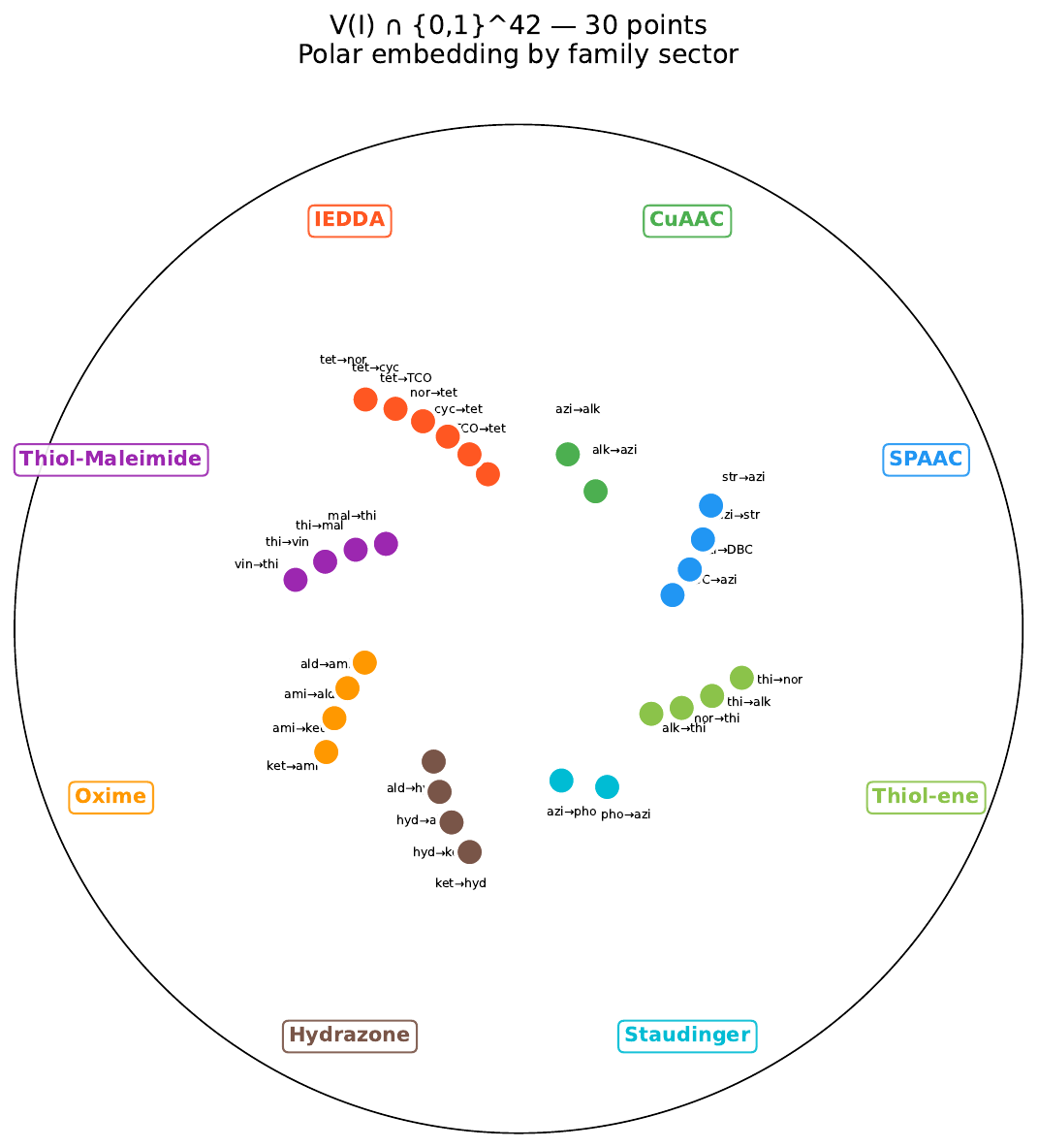}
\caption{Point cloud of $\Var(I) \cap \{0,1\}^{42}$, coloured
  by family.  Clusters correspond to fibres~$\Var(I_f)$.}
\label{fig:points}
\end{figure}

\paragraph{The assembly polytope.}
Taking the convex hull $\conv(\Var(I))$ of the 30~lattice points
and projecting onto the first three principal components yields
the polytope shown in \Cref{fig:segre} and~\cref{fig:polytope}.  Despite living
in $\RR^{42}$, the polytope admits a faithful 3D projection because
the effective dimension is controlled by the rank of the design
matrix ($\rank A = 28$).  The polytope's vertices are individual
assemblies, its edges connect assemblies differing in a single
handle choice, and its facets correspond to algebraic constraints
becoming tight.  This convex-geometric perspective connects
the assembly problem to the theory of lattice polytopes
and normal fans studied in toric geometry~\cite{Sturmfels96}.

\begin{figure}[h!]
\centering
\includegraphics[width=0.88\textwidth,trim=0 442 0 0,clip]{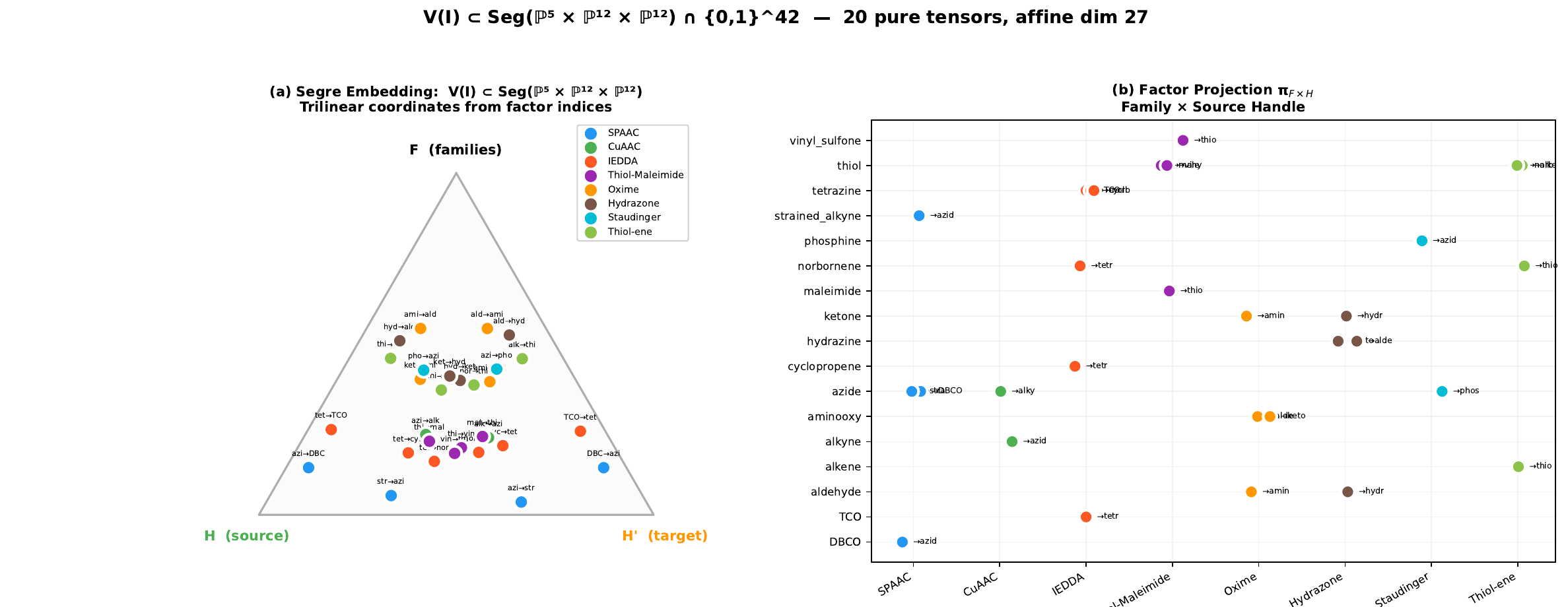}
\caption{Segre embedding and factor projections of $\Var(I)$.
  (a)~Trilinear coordinates from family, source-handle, and
  target-handle indices.
  (b)~Projection $\pi_{F \times H}$ onto the family~$\times$~source-handle plane.}
\label{fig:segre}
\end{figure}

\begin{figure}[h!]
\centering
\includegraphics[width=0.88\textwidth,trim=0 0 0 442,clip]{fig_variety_polytope}
\caption{Polytope geometry of $\conv(\Var(I)) \subset \RR^{42}$.
  (c)~Projection $\pi_{H \times H'}$ onto the reaction-coordinate plane.
  (d)~1-skeleton of $\conv(\Var(I))$: 30 vertices (assemblies) and
  16 edges (single-handle differences).}
\label{fig:polytope}
\end{figure}

\paragraph{The compatibility heatmap.}
\Cref{fig:projection} provides a complementary, discrete
view by projecting $\Var(I)$ onto the family~$\times$~handle-pair
grid.  Each cell $(f, (h, h'))$ is shaded if the triple
$(f, h, h')$ is feasible.  The dominant feature is the
block-diagonal structure imposed by $K_{\mathrm{compat}}$:
each family activates a small, contiguous block of handle pairs,
and most of the $8 \times 17^2 = 2312$ cells are empty.
The off-diagonal entries corresponding to shared handles are
clearly visible: for instance, azide columns are active in both
the SPAAC and CuAAC rows (and the Staudinger row), while
norbornene appears in both IEDDA and Thiol-ene rows.  These
shared columns are the visual fingerprint of the non-diagnostic
handles identified by the elimination ideal in
\Cref{thm:diagnostic}.

\begin{figure}[h!]
\centering
\includegraphics[width=0.92\textwidth]{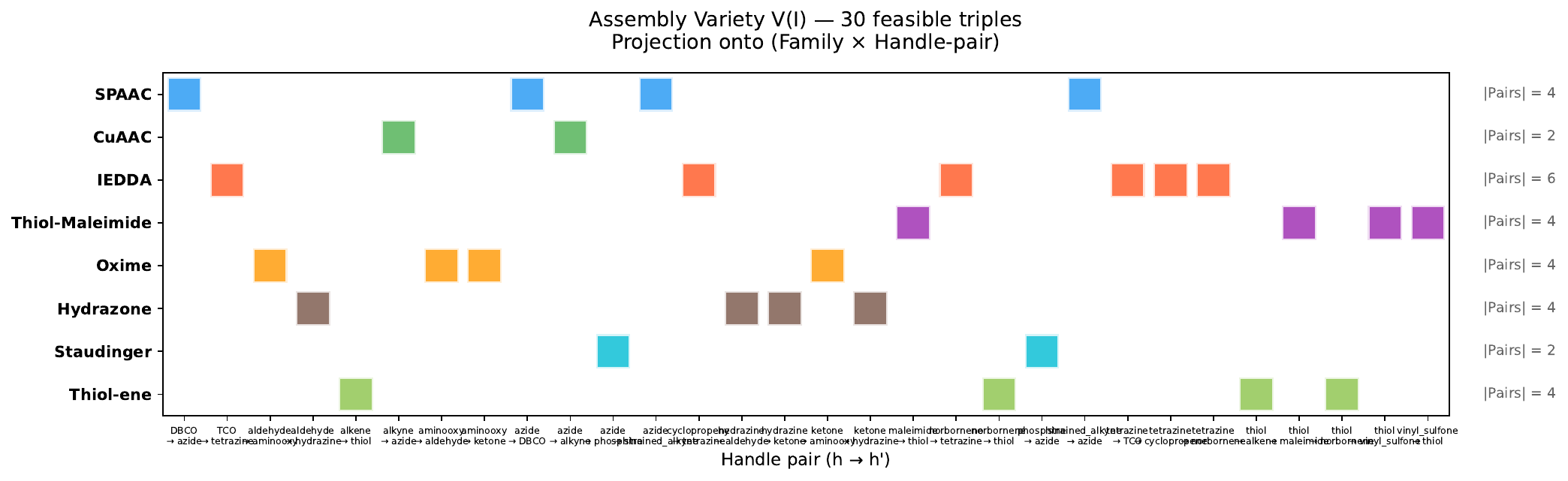}
\caption{Family~$\times$~handle-pair heatmap of~$\Var(I)$.
  Block-diagonal structure from $K_{\mathrm{compat}}$;
  shared-handle columns are the non-diagnostic entries.}
\label{fig:projection}
\end{figure}

\paragraph{The orthogonality graph and sequence.}
\Cref{fig:orthogonality} summarises the multi-step theory
of \cref{sec:multistep} in two panels.  The left panel shows the family-level handle-sharing graph
on 8~vertices and 6~edges; the four bottleneck corridors
are visually prominent.  The azide triangle
(SPAAC--CuAAC--Staudinger) forms a 3-clique, meaning at most one
of these three families can appear in any protocol.  The remaining
three edges (Oxime--Hydrazone, Thiol-Maleimide--Thiol-ene,
IEDDA--Thiol-ene) each eliminate one pairwise combination.
The right panel plots the orthogonality sequence
$(N_k)$ for $k = 1, \ldots, 8$.  The non-monotone
behaviour is striking: the count rises sharply from
$k = 1$ (30~single-step triples) to $k = 3$
(1152~three-step protocols), then plunges to 1024 at $k = 4$
before vanishing entirely at $k = 5$.  The peak at $k = 3$
reflects a combinatorial sweet spot where three pairwise-orthogonal
families can be assembled with maximal handle choice, while the
cliff at $k = 5$ is the sharp boundary imposed by
$\omega(G_\perp) = 4$.

\begin{figure}[h!]
\centering
\includegraphics[width=0.92\textwidth]{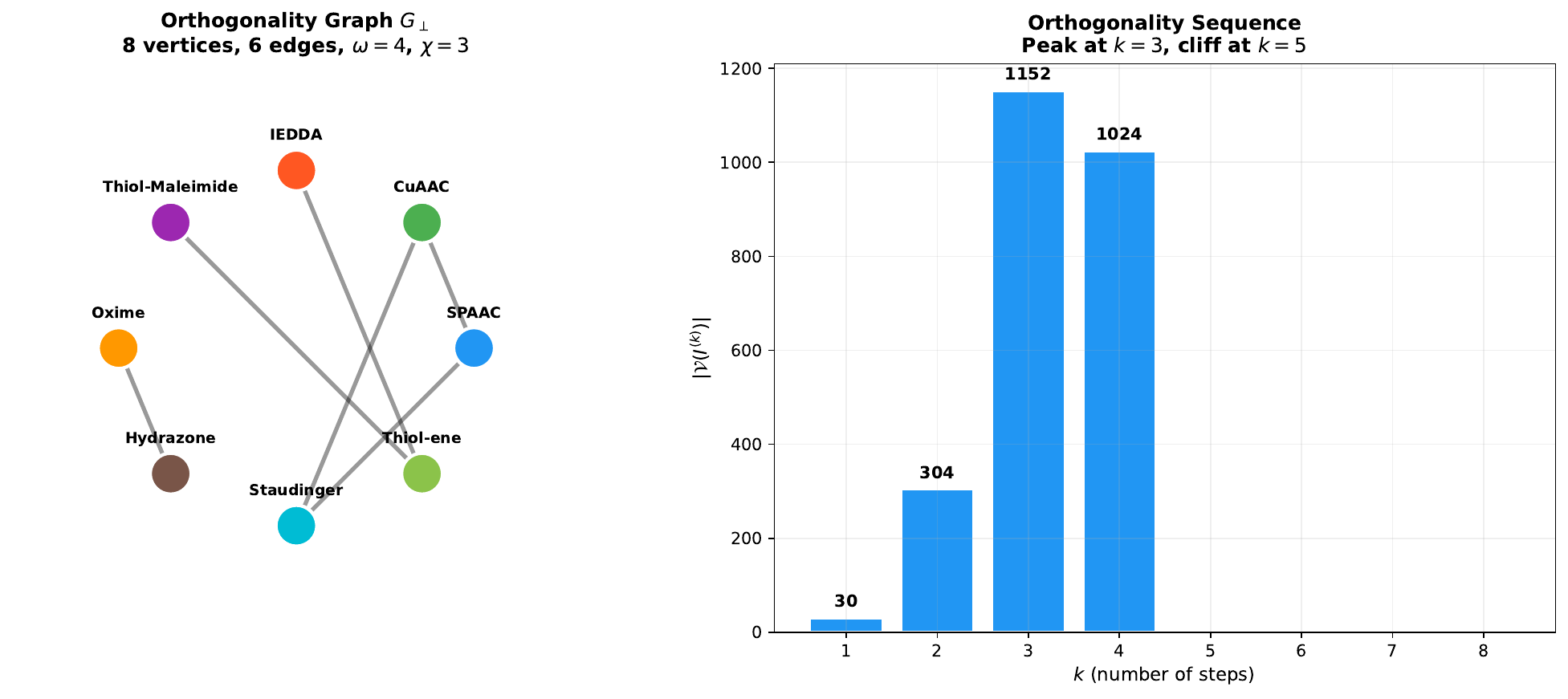}
\caption{Left: the family-level handle-sharing graph
  (8~vertices, 6~edges, 4~bottleneck corridors).
  Right: orthogonality sequence $(N_k)$
  peaking at $k = 3$ and vanishing at $k = 5$.}
\label{fig:orthogonality}
\end{figure}

\section{Discussion}\label{sec:discussion}

We have introduced an algebraic framework for compatibility-constrained
combinatorial assembly problems, encoding the design space as a Boolean
variety in a polynomial ring and reducing structural questions to
standard operations in commutative algebra.  The framework yields four
main results: the product-of-fields structure of~$R/I$
(\cref{sec:ring}), the elimination-theoretic characterisation of
handle diagnosticity (\cref{sec:elimination}), the two-generator
toric ideal with $\mathrm{MLdeg}(M) = 1$ (\cref{sec:statistics}),
and the $\omega(G_\perp) = 4$ bound with necessary and sufficient
conditions for its improvement (\cref{sec:multistep}).

\subsection{Limitations}

The framework encodes combinatorial feasibility on a Boolean
variety.  Continuous parameters (in the chemical setting:
molecular weight, lipophilicity, linker length) are not
represented; their incorporation would require extension to
semi-algebraic sets or real algebraic geometry
(\cref{sec:directions}).

Gr\"obner basis computation over~$\QQ$ is expensive for large
variable counts.  Working over $\mathbb{F}_2$ is algebraically
natural for Boolean ideals but has limited software support.
The fibre decomposition mitigates the computational cost by
reducing each computation to $2|H|$ variables
(\Cref{rem:fibre-scalability}).

\subsection{Research directions}\label{sec:directions}

We identify two open problems extending the present framework.

\subsubsection{Matroid structure of the feasible triples}

The set of 30~feasible triples, viewed as a subset of
$F \times H \times H'$, has a natural matroid-theoretic
interpretation.  Each triple $(f, h, h')$ can be identified
with a partial transversal of the bipartite graph between
families and handle pairs.  If the set of feasible triples
forms a \emph{transversal matroid} (in the sense of Edmonds--Fulkerson;
see~\cite{Oxley11}, Chapter~1), then the matroid's independent
sets correspond to simultaneously achievable assemblies in a
multi-target design, and the matroid rank equals the maximum
number of targets that can be addressed in a single protocol.

We have verified computationally that the feasible triples
satisfy the exchange axiom for subsets of size up to~4,
but a complete proof requires showing the augmentation
property for all subset sizes.  A positive answer would
connect the assembly problem to the extensive theory of
matroid intersection and matroid polytopes, opening
the door to polynomial-time algorithms for optimal
multi-target assembly selection.

\subsubsection{Continuous extension to semi-algebraic sets}

The current framework operates on the Boolean hypercube
$\{0,1\}^n$.  Incorporating continuous physicochemical properties
(molecular weight, PEG spacer length, lipophilicity) requires
moving to a semi-algebraic set
$S = \Var(I) \cap \{g_1 \geq 0, \ldots, g_s \geq 0\}$
where the~$g_j$ are polynomials encoding property windows.
The Positivstellensatz of Stengle (\cite{Stengle74};
see also~\cite{BCR98}, \S4.4) provides a certificate for
infeasibility of such systems, and Lasserre-type
sum-of-squares relaxations~\cite{Lasserre01} (cf.~also the toric
perspective of~\cite{Sturmfels96}) could yield practical algorithms
for optimising over~$S$.  The main challenge is that the Boolean
structure $J_{\mathrm{bool}}$ already forces~$R/I$ to be
zero-dimensional; adding continuous variables would create a
mixed discrete-continuous variety whose geometry differs
qualitatively from the purely Boolean case.

\section{Conclusions}\label{sec:conclusions}

\subsection{Algebraic contributions}

This paper demonstrates that compatibility-constrained combinatorial
assembly problems admit a unified algebraic treatment through Boolean
varieties and ideal theory.  The principal contributions are:

\begin{enumerate}[label=(\roman*),nosep]
\item The assembly ideal $I = J_{\mathrm{bool}} + J_{\mathrm{sel}} +
  K_{\mathrm{compat}}$ is radical, and the quotient $R/I$ decomposes
  as a product of copies of the ground field~$\kk$---one per feasible
  triple.  This product-of-fields structure equips the variety with a
  layered filtration whose successive quotients are controlled by
  explicit ideal quotients.

\item Elimination ideals detect handle diagnosticity: membership of
  $h(1 - f_0)$ in $I \cap \kk[F,H]$ certifies that a handle~$h$
  belongs to a unique family.  This algebraic criterion replaces
  ad-hoc enumeration with a certificate that is both machine-checkable
  and structurally informative.

\item For the 8-family instance, the toric ideal of the
  log-linear model on~$\Var(I)$ is generated by exactly two
  binomials, and the ML degree equals~1.  The maximum likelihood
  estimator is therefore a rational function of the data, placing
  the model in the simplest possible class from the perspective of
  algebraic statistics.

\item For the 8-family instance, the multi-step ideal $I^{(k)}$
  encodes simultaneous orthogonality constraints.  The clique number
  $\omega(G_\perp) = 4$ of the orthogonality graph provides an exact
  upper bound on the number of mutually compatible reactions, and the
  obstruction is concentrated in four bottleneck corridors of the
  cross-reactivity graph.
\end{enumerate}

Together, these results show that standard tools from commutative
algebra---Gr\"obner bases, elimination, toric ideals, and graph
theory---can extract structural information from combinatorial
design spaces that procedural enumeration alone cannot reveal.

\subsection{What the algebra reveals about bioorthogonal click chemistry}

Applied to the landscape of bioorthogonal click chemistry, the
algebraic framework yields a number of concrete insights about
the practical design of multiplexed ligation protocols.
A first indication of the framework's reach is the sheer sparsity of
the design space: of the $8 \times 17^2 = 2312$ conceivable
family--handle--handle combinations, only 30 (1.3\%) are chemically
feasible.  The algebra makes this precise and traces the 98.7\%
exclusion to explicit generators of the assembly ideal.

Of the 17~reactive functional groups (handles) catalogued across the
eight established reaction families, 12 turn out to be
\emph{diagnostic}: each one participates in exactly one reaction
family, so observing the handle immediately identifies the underlying
chemistry.  The five non-diagnostic handles---notably azide (shared by
CuAAC, SPAAC, and Staudinger) and norbornene (shared by IEDDA and
Thiol-ene)---are the sole sources of cross-reactivity in the system.
A practical consequence is that once a chemist has committed to a
particular reaction family, every compatible handle pair within that
family is equally available: the choice of source handle does not
constrain the target handle.  Design constraints only arise when
multiple families are combined in a single protocol, so optimisation
\emph{within} a family---for instance, selecting the least toxic
azide variant for a SPAAC step---is unconstrained by the algebra.

This cross-reactivity is tightly structured.  The entire algebraic
redundancy of the system is captured by just two binomial relations,
both reflecting the same chemical phenomenon: aldehyde and ketone
are interchangeable as carbonyl handles within the oxime and
hydrazone families.  No other hidden symmetries exist.
A related statistical consequence is that the maximum likelihood
estimator for assembly frequencies has a closed-form rational
expression (ML degree~$= 1$): if one measures how often each
assembly appears in a combinatorial library or high-throughput screen,
the best-fit model can be written down explicitly---no iterative
fitting is required, and there is no risk of converging to a
spurious local optimum.

The most consequential finding concerns multiplexing.  Although the
eight reaction families might suggest that up to eight orthogonal
reactions could run simultaneously, the algebra shows the true
ceiling is four.  The obstruction traces to four specific
cross-reactivity corridors in the handle-sharing graph---at the
azide, carbonyl, thiol, and norbornene positions---where distinct
families compete for the same reactive group.  Protocols of three simultaneous reactions enjoy the greatest
design freedom ($N_3 = 1152$ unordered protocols, distributed
over 23~distinct family subsets); at four reactions, the design
space contracts to $N_4 = 1024$ protocols across just six family
subsets, and at five the combinatorial constraints become
infeasible altogether.
Moreover, every maximally multiplexed (four-reaction) protocol must
include both IEDDA and Thiol-Maleimide: these two families appear in
all six maximum independent sets because they share no handles with
any family that can itself participate in a maximum clique.  (IEDDA
shares norbornene with Thiol-ene, and Thiol-Maleimide shares thiol
with Thiol-ene, but Thiol-ene is excluded from every MIS by its
conflicts in the thiol and norbornene corridors.)  Leaving either
one out of a multiplexed design wastes a guaranteed conflict-free
slot.  The remaining two
slots are then filled by choosing one family from each side of the
azide corridor (SPAAC \emph{or} CuAAC, never both) and one from the
carbonyl corridor (Oxime \emph{or} Hydrazone, never both).

These conclusions---handle diagnosticity, within-family
handle freedom, carbonyl equivalence, the closed-form
MLE, the four-reaction ceiling, the IEDDA/Thiol-Maleimide
anchor, and the sharp cliff beyond---do not depend
on kinetic rates, solvent
conditions, or steric considerations.  They are purely combinatorial
consequences of which handles pair with which families, extracted by
the algebraic machinery developed in this paper.

Finally, the framework provides an explicit \emph{family extension
criterion} for evaluating candidate reactions before any synthesis is
undertaken.  \Cref{prop:omega5} reduces the question ``will
a new reaction family raise the multiplexing ceiling?'' to a
handle-disjointness check---supplemented by the absence of
third-family mediated cross-reactivity---against the six maximum
independent sets of the current orthogonality graph.  For families
introducing entirely novel handles, the check is automatic.  If
the new family's handles are already used by a family present in
every maximum independent set (in the current landscape: IEDDA or
Thiol-Maleimide), the ceiling cannot rise.  Among six emerging reactions surveyed, five
pass this test---most notably SuFEx, whose entirely novel handles
(sulfonyl fluoride and silyl ether) raise $\omega$ to~5
unconditionally and shift the orthogonality cliff from $k = 5$ to
$k = 6$, opening 2048 five-step protocols that are currently
infeasible.  The sole exception, isonitrile--tetrazine, fails because
it shares tetrazine with IEDDA.  This criterion gives experimentalists
a purely combinatorial screening tool: before investing in the
development of a new bioorthogonal handle pair, one can check in
advance whether it will actually expand the multiplexing capacity of
the toolkit.


\section*{Acknowledgments}
\textbf{Disclosure of AI assistance.}
This paper reports original research conceived, directed,
proved, and verified by the author.  The author formulated
the problem, designed the algebraic framework---the
family--handle pair structure, the assembly ideal $I$, the
choice of invariants (zero-dimensionality, radicality,
primary decomposition, toric reduction, ML degree, the
multi-step ideal $I^{(k)}$, the orthogonality graph
$G_\perp$, and the $\omega = 5$ criterion)---identified
bioorthogonal click chemistry as the target application,
selected the Kadcyla dual-click example, stated and proved
the theorems, and interpreted the biological and design-level
consequences.  An AI assistant (Claude, Anthropic, April~2026)
was used under the author's direction as a technical tool
for the following mechanical tasks: implementing Python
scripts from the author's specifications
(\texttt{encode.py}, \texttt{multistep.py},
\texttt{statistics.py}) to carry out numerical verification
of the author's algebraic claims, producing figures
following the author's designs, \LaTeX{} typesetting, and
copy-editing of prose drafted by the author.  All
mathematical content, proofs, numerical results, design
choices, and editorial decisions are the author's; the
author reviewed and approved every line of the final
manuscript and code, and assumes sole responsibility for
the integrity and accuracy of the work.  The complete
codebase is available at
\url{https://github.com/graonet/AlgeClick} (currently
private; to be made public upon acceptance).



\begin{thebibliography}{22}
\providecommand{\natexlab}[1]{#1}
\providecommand{\url}[1]{\texttt{#1}}
\expandafter\ifx\csname urlstyle\endcsname\relax
  \providecommand{\doi}[1]{doi: #1}\else
  \providecommand{\doi}{doi: \begingroup \urlstyle{rm}\Url}\fi

\bibitem[Barndorff-Nielsen(1978)]{BN78}
O.~E. Barndorff-Nielsen.
\newblock \emph{Information and Exponential Families in Statistical Theory}.
\newblock Wiley, 1978.

\bibitem[Bochnak et~al.(1998)Bochnak, Coste, and Roy]{BCR98}
J.~Bochnak, M.~Coste, and M.-F. Roy.
\newblock \emph{Real Algebraic Geometry}, volume~36 of \emph{Ergebnisse der
  Mathematik und ihrer Grenzgebiete}.
\newblock Springer, 1998.

\bibitem[Cox et~al.(2015)Cox, Little, and O'Shea]{CLO15}
D.~Cox, J.~Little, and D.~O'Shea.
\newblock \emph{Ideals, Varieties, and Algorithms}.
\newblock Springer, 4th edition, 2015.

\bibitem[Csisz{\'a}r(1975)]{Csiszar75}
I.~Csisz{\'a}r.
\newblock {$I$}-divergence geometry of probability distributions and
  minimization problems.
\newblock \emph{Ann.\ Probab.}, 3\penalty0 (1):\penalty0 146--158, 1975.

\bibitem[Devaraj and Weissleder(2011)]{Devaraj11}
N.~K. Devaraj and R.~Weissleder.
\newblock Biomedical applications of tetrazine cycloadditions.
\newblock \emph{Acc.\ Chem.\ Res.}, 44\penalty0 (9):\penalty0 816--827, 2011.

\bibitem[Dickenstein(2016)]{Dickenstein16}
A.~Dickenstein.
\newblock Biochemical reaction networks: an invitation for algebraic geometers.
\newblock In \emph{Algebraic and Geometric Methods in Discrete Mathematics},
  volume 656 of \emph{Contemp.\ Math.}, pages 65--83. American Mathematical
  Society, 2016.

\bibitem[Drton et~al.(2009)Drton, Sturmfels, and Sullivant]{DSS09}
M.~Drton, B.~Sturmfels, and S.~Sullivant.
\newblock \emph{Lectures on Algebraic Statistics}.
\newblock Oberwolfach Seminars. Birkh\"auser, 2009.

\bibitem[Eisenbud(1995)]{Eisenbud95}
D.~Eisenbud.
\newblock \emph{Commutative Algebra with a View Toward Algebraic Geometry},
  volume 150 of \emph{Graduate Texts in Mathematics}.
\newblock Springer, 1995.

\bibitem[Feinberg(2019)]{Feinberg19}
M.~Feinberg.
\newblock \emph{Foundations of Chemical Reaction Network Theory}.
\newblock Springer, 2019.

\bibitem[Feliu and Shiu(2025)]{FeliuShiu25}
E.~Feliu and A.~Shiu.
\newblock From chemical reaction networks to algebraic and polyhedral
  geometry---and back again.
\newblock In \emph{Varieties, Polyhedra, Computation}, EMS Series of Congress
  Reports. European Mathematical Society, 2025.
\newblock arXiv:2501.06354.

\bibitem[Lang and Chin(2014)]{Lang14}
K.~Lang and J.~W. Chin.
\newblock Bioorthogonal reactions for labeling living systems.
\newblock \emph{ACS Chem.\ Biol.}, 9\penalty0 (1):\penalty0 16--20, 2014.

\bibitem[Lasserre(2001)]{Lasserre01}
J.~B. Lasserre.
\newblock Global optimization with polynomials and the problem of moments.
\newblock \emph{SIAM J.\ Optim.}, 11\penalty0 (3):\penalty0 796--817, 2001.

\bibitem[McKay and Finn(2014)]{McKay14}
C.~S. McKay and M.~G. Finn.
\newblock Click chemistry in complex mixtures: bioorthogonal bioconjugation.
\newblock \emph{Chem.\ Biol.}, 21\penalty0 (9):\penalty0 1075--1101, 2014.

\bibitem[Meurer et~al.(2017)]{SymPy}
A.~Meurer et~al.
\newblock {SymPy}: symbolic mathematics in {Python}.
\newblock \emph{PeerJ Comput.\ Sci.}, 3:\penalty0 e103, 2017.

\bibitem[Oxley(2011)]{Oxley11}
J.~Oxley.
\newblock \emph{Matroid Theory}, volume~21 of \emph{Oxford Graduate Texts in
  Mathematics}.
\newblock Oxford University Press, 2nd edition, 2011.

\bibitem[Pachter and Sturmfels(2005)]{PS05}
L.~Pachter and B.~Sturmfels.
\newblock \emph{Algebraic Statistics for Computational Biology}.
\newblock Cambridge University Press, 2005.

\bibitem[Rostovtsev et~al.(2002)Rostovtsev, Green, Fokin, and
  Sharpless]{Rostovtsev02}
V.~V. Rostovtsev, L.~G. Green, V.~V. Fokin, and K.~B. Sharpless.
\newblock A stepwise {Huisgen} cycloaddition process: copper({I})-catalyzed
  regioselective ``ligation'' of azides and terminal alkynes.
\newblock \emph{Angew.\ Chem.\ Int.\ Ed.}, 41\penalty0 (14):\penalty0
  2596--2599, 2002.

\bibitem[Sletten and Bertozzi(2009)]{Sletten09}
E.~M. Sletten and C.~R. Bertozzi.
\newblock Bioorthogonal chemistry: fishing for selectivity in a sea of
  functionality.
\newblock \emph{Angew.\ Chem.\ Int.\ Ed.}, 48\penalty0 (38):\penalty0
  6974--6998, 2009.

\bibitem[Stengle(1974)]{Stengle74}
G.~Stengle.
\newblock A {Nullstellensatz} and a {Positivstellensatz} in semialgebraic
  geometry.
\newblock \emph{Math.\ Ann.}, 207:\penalty0 87--97, 1974.

\bibitem[Sturmfels(1996)]{Sturmfels96}
B.~Sturmfels.
\newblock \emph{{Gr\"obner} Bases and Convex Polytopes}.
\newblock AMS University Lecture Series. American Mathematical Society, 1996.

\bibitem[Torn{\o}e et~al.(2002)Torn{\o}e, Christensen, and Meldal]{Tornoe02}
C.~W. Torn{\o}e, C.~Christensen, and M.~Meldal.
\newblock Peptidotriazoles on solid phase: [1,2,3]-triazoles by regiospecific
  copper({I})-catalyzed 1,3-dipolar cycloadditions of terminal alkynes to
  azides.
\newblock \emph{J.~Org.\ Chem.}, 67\penalty0 (9):\penalty0 3057--3064, 2002.

\bibitem[Wainwright and Jordan(2008)]{WainwrightJordan08}
M.~J. Wainwright and M.~I. Jordan.
\newblock Graphical models, exponential families, and variational inference.
\newblock \emph{Foundations and Trends in Machine Learning}, 1\penalty0
  (1--2):\penalty0 1--305, 2008.

\end{thebibliography}
\end{document}